\documentclass[11pt,letterpaper]{amsart}

\usepackage[T1]{fontenc}
\usepackage{lmodern}
\usepackage[expansion=false]{microtype}
\usepackage{amsmath,amssymb,mathtools,mathrsfs}
\usepackage[margin=1.08in]{geometry}
\usepackage{xcolor}
\usepackage[
  colorlinks=true,
  linkcolor=blue,
  citecolor=red,
  urlcolor=blue
]{hyperref}

\usepackage{amsrefs}

\numberwithin{equation}{section}
\theoremstyle{plain}
\newtheorem{theorem}{Theorem}[section]
\newtheorem{maintheorem}{Theorem}

\newtheorem{proposition}[theorem]{Proposition}
\newtheorem{lemma}[theorem]{Lemma}
\newtheorem{corollary}[theorem]{Corollary}
\theoremstyle{definition}

\newtheorem{example}[theorem]{Example}
\newtheorem{question}[theorem]{Question}
\newtheorem*{bedfordquestion}{Question 2}
\theoremstyle{remark}
\newtheorem{remark}[theorem]{Remark}

\newcommand{\C}{\mathbb C}
\newcommand{\D}{\mathbb D}
\newcommand{\PSH}{\operatorname{PSH}}
\newcommand{\MA}{\operatorname{MA}}
\newcommand{\cM}{\mathcal M}
\newcommand{\ddc}{dd^c}
\newcommand{\loc}{\mathrm{loc}}
\newcommand{\supp}{\operatorname{supp}}
\newcommand{\Clust}{\operatorname{Clust}}

\hypersetup{
  pdftitle={Prescribed Limits and Cluster Sets of Complex Monge-Ampere Measures},
  pdfauthor={Xiangsen Qin},
  pdfsubject={Pluripotential theory and the complex Monge-Ampere operator},
  pdfkeywords={plurisubharmonic functions, complex Monge-Ampere operator, prescribed limits, cluster sets}
}

\begin{document}

\title[Prescribed limits and cluster sets]
{Prescribed Limits and Cluster Sets of Complex
Monge--Amp\`ere Measures}

\author[X. Qin]{Xiangsen Qin}
\address{Xiangsen Qin: Chern Institute of Mathematics and LPMC, Nankai University \\
Tianjin 300071, China}
\email{qinxiangsen@nankai.edu.cn}

\begin{abstract}
Motivated by Bedford's question, we study the possible cluster sets,
in the vague topology, of Monge--Amp\`ere measures associated with
bounded plurisubharmonic sequences $u_j\to\varphi$ locally in $L^1$,
without assuming a common bound.
On a bounded domain $\Omega\subset\mathbb C^n$ ($n\geq2$), and for
a bounded maximal plurisubharmonic $\varphi$, the possible sets of
all vague subsequential measure limits are precisely the vaguely
closed sets of nonnegative Radon measures, including the empty set.
This conclusion holds for every bounded plurisubharmonic $\varphi$
when $\Omega$ is hyperconvex, pseudoconvex and Runge, or a
domain obtained by removing a relatively closed pluripolar set from
a bounded hyperconvex domain.
Under these hypotheses, every nonnegative Radon measure $\nu$ can
also be realized by such a sequence with $(dd^cu_j)^n\to\nu$ vaguely.
The main analytic result holds on every bounded domain: for any
bounded plurisubharmonic $\varphi$ and nonnegative Radon measure $\mu$,
we construct $u_j\leq\varphi$ converging to $\varphi$ in
$L^p(\Omega)$ for every $1\leq p<\infty$, with
$(dd^cu_j)^n\to(dd^c\varphi)^n+\mu$ vaguely.
The approximants can be chosen smooth when $\varphi$ is continuous. On arbitrary pseudoconvex domains, prescribed measure limits can
also be realized using locally bounded approximants. Under an
additional localization condition, the approximants can be chosen
globally bounded when the limit function is bounded.
\end{abstract}

\subjclass[2020]{Primary 32W20; Secondary 32U05, 32U15, 32E10.}
\keywords{Plurisubharmonic functions, complex Monge--Amp\`ere operator,
prescribed limits, cluster sets.}

\maketitle
\tableofcontents
\section{Introduction}\label{sec:introduction}

The Bedford--Taylor theory gives continuity of Monge--Amp\`ere
measures along decreasing sequences of locally bounded
plurisubharmonic (psh) functions with a locally bounded limit
\cite{BedfordTaylor1982}.
Under local $L^1$ convergence alone, however, the measures may have
subsequential limits different from the Monge--Amp\`ere measure of
the limiting function.
Bedford's question, stated below, asks what can be said about the
set of these subsequential limits when each approximating function
is bounded, but no common bound is assumed.

We write $\PSH(\Omega)$ for the psh functions on a
domain $\Omega\subset\C^n$ and use the normalization
\[
 d^c=\frac{\partial-\bar\partial}{4\pi i},\qquad
 dd^c=\frac{i}{2\pi}\partial\bar\partial.
\]
For locally bounded psh $u$, let $\MA(u)=(dd^cu)^n$ denote its Monge--Amp\`ere measure.

\begin{bedfordquestion}[Bedford, as recorded in {\cite{DGZ}*{Question~2}}]
Let $\Omega\subset\C^n$ be a domain and let
$\varphi\in\PSH(\Omega)\cap L^\infty(\Omega)$.  Suppose that
\[
 \varphi_j\in\PSH(\Omega)\cap L^\infty(\Omega),\qquad
 \varphi_j\longrightarrow\varphi\quad\text{in }L^1_{\loc}(\Omega).
\]
What can be said about the set of cluster points of $(dd^c\varphi_j)^n$?
\end{bedfordquestion}

In dimension one, every such sequence satisfies
$dd^c\varphi_j\to dd^c\varphi$ vaguely, so its cluster set is
$\{dd^c\varphi\}$; see Proposition~\ref{prop:dimension-one}.
The results below concern higher dimensions ($n\geq2$).

In this paper we formulate the question in the vague
topology; the original question does not specify a measure topology.
Write $\cM_+(\Omega)$ for the nonnegative Radon measures, including zero.
These measures are finite on compact subsets and may have infinite total
mass.  Vague convergence means convergence against every test function in
$C_c(\Omega)$.  For a sequence $(\nu_j)$ in $\cM_+(\Omega)$, its full
vague cluster set is
\[
 \Clust_{\mathrm v}(\nu_j)
 =\{\nu\in\cM_+(\Omega):\nu_{j_\ell}\to\nu\text{ vaguely
 for some }j_1<j_2<\cdots\}.
\]
A set $F\subset\cM_+(\Omega)$ is called vaguely closed if
every sequence in $F$ that converges vaguely to a measure in
$\cM_+(\Omega)$ has its limit in $F$.

For a fixed limit function $\varphi$, we distinguish individual
measure limits obtained from all admissible approximating sequences
from full cluster sets obtained by choosing one such sequence.  We
classify these possibilities under the hypotheses below; the cluster
set of a sequence prescribed in advance depends on that sequence.  Question~1 in \cite{DGZ}, which
concerns decreasing approximation to a possibly unbounded psh function,
has different approximation requirements.

The first theorem classifies the possible full cluster sets in four
cases.  Recall that a locally bounded psh function $\varphi$ is maximal
if and only if $\MA(\varphi)=0$.

\begin{maintheorem}
\label{thm:classification}\label{cor:maximal-background}\label{cor:question2-cases}
Let $\Omega\subset\C^n$ ($n\geq2$) be a bounded domain and let
$\varphi\in\PSH(\Omega)\cap L^\infty(\Omega)$.  Suppose that one of
the following holds:
\begin{enumerate}
 \item $\varphi$ is maximal on $\Omega$;
 \item $\Omega$ is hyperconvex;
 \item $\Omega$ is pseudoconvex and Runge in $\C^n$;
 \item $\Omega=G\setminus E$, where $G\subset\C^n$ is
 a bounded hyperconvex domain and $E$ is relatively closed and pluripolar
 in $G$.
\end{enumerate}
Then for every $F\subset\cM_+(\Omega)$, the following are equivalent:
\begin{enumerate}
 \item[$\mathrm{(i)}$] $F$ is vaguely closed;
 \item[$\mathrm{(ii)}$] there exist
 $u_j\in\PSH(\Omega)\cap L^\infty(\Omega)$ such that
 \[
 u_j\to\varphi\quad\text{in }L^1_{\loc}(\Omega),\qquad
 \Clust_{\mathrm v}(\MA(u_j))=F.
 \]
\end{enumerate}
The empty set is included.  In addition, every
$\nu\in\cM_+(\Omega)$ is the vague limit of the entire measure
sequence $\MA(u_j)$ for some such approximating sequence.
In cases \textup{(2)--(4)}, the approximants in both assertions can be
chosen smooth.  In case \textup{(1)}, they can be chosen smooth if
$\varphi$ is continuous.
\end{maintheorem}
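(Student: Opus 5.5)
The plan is to reduce both assertions to a single \emph{prescribed-limit} construction and then obtain arbitrary closed cluster sets by a diagonal/interleaving argument. The key analytic input, promised in the abstract, is the following. For every bounded psh $\varphi$ on a bounded domain and every $\mu\in\cM_+(\Omega)$, there are bounded psh $u_j\le\varphi$ with $u_j\to\varphi$ in $L^p$ and $\MA(u_j)\to\MA(\varphi)+\mu$ vaguely. I would construct this by placing, near each point of a discrete approximation $\sum_k c_k\delta_{a_k}$ of $\mu$, a small-support perturbation of $\varphi$. A natural candidate is $\max\bigl(\varphi,\,\varphi+\epsilon\log(|z-a_k|/r)\bigr)$-type modifications. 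Equivalently, one glues in a scaled pluricomplex Green function of a tiny ball, with mass $c_k$ and a depth $\to0$ in $L^1$. These pieces create mass $c_k$ concentrated near $a_k$, while their $L^p$ cost tends to zero as the radii shrink.

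Given this, (ii)$\Rightarrow$(i) is formal. The cluster set of any sequence in $\cM_+(\Omega)$ is vaguely closed, by a diagonal argument: vague convergence is metrizable on bounded sets of $\cM_+$, and a limit of cluster points is a cluster point, since one picks indices with increasing subsequence positions. For (i)$\Rightarrow$(ii), the one place where the four hypotheses enter is the following step. To realize \emph{every} $\nu$, and not only $\nu\ge\MA(\varphi)$, we must first remove the background $\MA(\varphi)$. In case (1) there is nothing to do, since $\MA(\varphi)=0$. In cases (2)--(4) I would approximate $\varphi$ in $L^1_{\loc}$ by bounded psh functions $\psi_m$ that are maximal on larger and larger compacts. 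For (2), take $\psi_m=\max(\varphi,\,M_m\rho)$ adjusted via a maximal envelope/Perron--Bremermann solution on a sub-hyperconvex exhaustion, with $\rho$ an exhaustion. Alternatively, solve $\MA(\psi)=0$ with boundary data matching $\varphi$ outside a thin boundary layer, so that the mass is pushed toward $\partial\Omega$ and disappears vaguely. For (3), Runge plus pseudoconvexity gives approximation of $\varphi$ by functions like $\frac1N\log|P|$-type maxima of polynomial moduli, which are maximal off pluripolar sets; after smoothing, their mass concentrates near hypersurfaces. I would instead use the fact that such approximants have $L^1$-limit $\varphi$ and can be made to have vaguely vanishing mass by a rescaling trick. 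For (4), apply (2) on $G$ and restrict, using that pluripolar $E$ is negligible. In each case, combining with the key construction applied to $\psi_m$ and a diagonal choice gives sequences with $\MA\to\nu$ for any prescribed $\nu$. Smoothness follows from the same construction when $\varphi$ (respectively $\psi_m$) is continuous, via regularized max and convolution.

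Finally, for a vaguely closed $F$: if $F=\emptyset$, choose $\nu_j$ whose mass escapes to infinity on a fixed compact, e.g. $\nu_j=j\,\delta_{a}$, realized with errors $\le 2^{-j}$. Then no subsequence converges, yet $u_j\to\varphi$. If $F\neq\emptyset$, $F$ is closed in a separable metrizable space of measures (metrize $\cM_+$ via a countable dense family of test functions, allowing divergence). I would pick a sequence $(\nu_k)$ in $F$ whose set of cluster points together with itself is exactly $F$: enumerate a countable dense subset and repeat each element infinitely often. I would then interleave further measures $k\,\delta_a$ so that no spurious limits arise. For each index, choose $u_k$ from the prescribed-limit result with $\operatorname{dist}(\MA(u_k),\nu_k)<1/k$ and $\|u_k-\varphi\|_{L^1(K_k)}<1/k$. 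The cluster set then equals the closure of the cluster set of $(\nu_k)$, which is $F$. The main obstacle I expect is the background-removal step in cases (3) and (4), namely producing bounded approximants of $\varphi$ with vaguely vanishing Monge--Amp\`ere mass; the local bubbling construction itself is the main analytic work but is standard in spirit.
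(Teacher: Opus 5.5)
You have the same overall architecture as the paper: an addition theorem for a fixed bounded background, removal of $\MA(\varphi)$ by bounded approximants whose measures tend vaguely to zero, and diagonal selection in a metric for the vague topology on all of $\cM_+(\Omega)$. The direction (ii)$\Rightarrow$(i), the realization of a nonempty closed $F$ by a dense sequence repeated infinitely often, the empty set via $j\delta_a$, and case (4) by extension across $E$ and restriction are all fine.

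The genuine gap is the local construction you call standard in spirit: isotropic bubbles cannot carry a fixed mass at vanishing $L^1$ cost. Your candidate $\max\{\varphi,\varphi+\varepsilon\log(|z-a|/r)\}$ equals $\varphi+\varepsilon\log^+(|z-a|/r)$. The bubble $\varepsilon\log^+(|z-a|/r)$ has Monge--Amp\`ere mass a fixed multiple of $\varepsilon^n$, so mass $c$ forces $\varepsilon\approx c^{1/n}$. Then the deviation has $L^1$ norm tending to $+\infty$ as $r\to0$.

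Gluing in a rescaled Green function of $B(a,r)$ fails for a structural reason. The spherical means $\lambda_w(a,s)$ of a psh $w$ are nondecreasing and convex in $\log s$. A radial profile of slope $\approx c^{1/n}$ on an annulus inside $B(a,r)$ therefore forces at least that slope on every larger sphere in $\Omega$. Averaging over short radial intervals and passing to the $L^1$ limit, a bounded $\varphi$ would satisfy $\lambda_\varphi(a,s_2)-\lambda_\varphi(a,s_1)\gtrsim c^{1/n}\log(s_2/s_1)$ for all small $s_1$, which is impossible. Splitting the mass among many poles only makes things worse, since the cost scales like the $n$-th root of the mass.

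What is missing is the paper's anisotropic two-scale construction. The atoms are spread on parallel slices $\{z_1=a_r\}$, and one takes $h=\max\{\varepsilon\log|F|^2,\varepsilon^{-1/(n-1)}V_0-A\}$. This $h$ has $L^p$ norm $O(\varepsilon N)$, while by the Andersson--Witt Nystr\"om--Wulcan truncation formula $(\ddc h)^r\to\varepsilon^{(n-r)/(n-1)}[D]\wedge(\ddc V_0)^{r-1}$ as $A\to\infty$. The top degree therefore gives the fixed slice measure, and every lower degree is small. That smallness also handles something your proposal never addresses: the mixed terms $(\ddc\varphi)^{n-r}\wedge(\ddc h)^r$ in $\MA(\varphi+h)$. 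Since $\varphi$ is merely bounded, and the perturbations cannot share a common local bound (otherwise their measures would tend to zero), these terms must be killed by the Chern--Levine--Nirenberg estimate.

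The second gap is the background removal in cases (2)--(3). Solving $\MA(\psi)=0$ on an exhausting subdomain with boundary data $\varphi$, with $\psi=\varphi$ outside, produces the maximal majorant, which does not approximate $\varphi$. For $\varphi=|z|^2-1$ on the unit ball this yields $\max\{t^2-1,|z|^2-1\}$, which tends to $0$ as $t\to1$: pushing the mass to the boundary changes the $L^1_{\loc}$ limit.

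The needed input is Lelong's theorem that continuous maximal psh functions are $L^1_{\loc}$-dense among locally bounded psh functions on a pseudoconvex domain. These approximants $\psi_j$ carry no global bound. The paper makes them bounded without changing them near $K_j$ by pasting: take $\max\{\psi_j,A_ja_j+c_j\}$ on $U_j$ and $A_ja_j+c_j$ outside, where $a_j$ is a bounded continuous psh function separating $K_j$ from $\partial U_j$. Such $a_j$ exist because bounded hyperconvex domains and bounded pseudoconvex Runge domains have compact $\mathcal B_c$-hulls; in the Runge case, polynomial approximation puts the $\mathcal B_c$-hull inside the holomorphic hull.

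Your case (3) sketch with $\frac1N\log|P|$ and an unspecified rescaling is not yet an argument. Continuity of the pasted approximants is also what gives smooth approximants in cases (2)--(4) when $\varphi$ itself is discontinuous.
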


On bounded open subsets of $\C^2$, Cegrell \cite{Cegrell1984}
studied the measure limits obtained by approximating a fixed locally
bounded psh function with a uniformly locally bounded psh sequence,
using distributional convergence for both potentials and measures.
He proved that this set is star-shaped about the Monge--Amp\`ere
measure of the limit function and that its elements give no mass
to pluripolar sets
\cite{Cegrell1984}*{Propositions~1(a) and~2(a)}.
For nonnegative Radon measures, distributional convergence is
equivalent to vague convergence.

Here each approximant is globally bounded, but no common local bound
is imposed on the sequence.
Under the hypotheses of Theorem~\ref{thm:classification}, every
nonnegative Radon measure, including a point mass, is the vague limit
of an entire sequence of Monge--Amp\`ere measures.
Moreover, for each fixed limit function covered by that theorem,
the possible full cluster sets are exactly the vaguely closed
subsets of $\cM_+(\Omega)$, including the empty set.

Cases \textup{(2)--(4)} apply to every bounded psh function on the
indicated domain; case \textup{(1)} requires no geometric condition on
the bounded domain.  By Chen's theorem
\cite{Chen2021}*{Theorem~1.1}, bounded pseudoconvex domains with H\"older boundary
are hyperconvex and hence satisfy \textup{(2)}.  Punctured balls, punctured polydiscs,
and the Hartogs triangle are further examples.  The compact-hull
condition in Section~\ref{sec:geometry} gives a broader sufficient
condition.

Even when smooth bounded psh functions converge to zero in
$L^p(\Omega)$ for every $1\leq p<\infty$, their Monge--Amp\`ere
measures may converge vaguely to a nonzero measure---in particular,
a Dirac measure.
Indeed, on any bounded domain $\Omega\subset\C^n$ ($n\geq2$), and
for any $a\in\Omega$, the following Theorem~\ref{thm:fixed-addition} with
$\varphi=0$ gives smooth bounded psh functions $u_j$ satisfying
\[
 u_j\leq0,\qquad
 \|u_j\|_{L^p(\Omega)}\to0\quad(1\leq p<\infty),
 \qquad \MA(u_j)\to\delta_a\quad\text{vaguely},
\]
where $\delta_a$ is the Dirac measure at $a$.
Such concentration would be impossible under a common local bound,
which would force $\MA(u_j)\to0$ by
Proposition~\ref{prop:common-bound-zero}.
Since $u_j\leq0$, the sequence must therefore lack a common lower
bound on some compact subset of $\Omega$.

The following theorem supplies the analytic construction used in
Theorem~\ref{thm:classification}. It applies to every bounded psh
function on a bounded domain, independently of the geometric
hypotheses in that classification. Here and throughout, $L^p$ norms
are taken with respect to Lebesgue measure.

\begin{maintheorem}\label{thm:fixed-addition}
Let $\Omega\subset\C^n$ ($n\geq2$) be a bounded domain, let
$\varphi\in\PSH(\Omega)\cap L^\infty(\Omega)$, and let
$\mu\in\cM_+(\Omega)$.  Then there are bounded psh functions $u_j$ such that
\[
 u_j\leq\varphi\quad\text{on }\Omega,\qquad
 \|u_j-\varphi\|_{L^p(\Omega)}\longrightarrow0
 \quad(1\leq p<\infty),
\]
and
\[
 \MA(u_j)\longrightarrow\MA(\varphi)+\mu\quad\text{vaguely}.
\]
The same sequence works for every finite $p\geq 1$.  If $\varphi$ is
continuous, the functions $u_j$ can be chosen smooth.
\end{maintheorem}

The truncation formula of Andersson, Witt Nystr\"om, and Wulcan
\cite{AWNW}*{Theorem~1.5}, whose analytic-singularity case follows
from B\l ocki's \cite{Blocki2019}*{Theorem~1}, provides the starting
point; see \cite{AWNW}*{Remark~5.7}.
Their Example~5.8 illustrates how truncations approaching a fixed
singular potential can produce arbitrarily large limiting mass on
a divisor.
Our construction instead uses these formulas to prescribe the
measure limit of perturbations tending to zero.
The main step is to retain the prescribed top-degree measure while
making every lower-degree current small in local trace mass.
We achieve this by combining approximation on parallel slices with
the scales $\varepsilon$ and $\varepsilon^{-1/(n-1)}$.
Chern--Levine--Nirenberg estimates then make the mixed terms vanish
when the perturbations are added to any bounded psh function
$\varphi$, without requiring continuity.
The parameter choices also ensure global boundedness,
approximation from below, and convergence in $L^p(\Omega)$ for every
$1\leq p<\infty$ along the same sequence.

Theorem~\ref{thm:tracking} extends the construction to sequences of
bounded psh functions $b_k$ and prescribed measures $\mu_k$,
retaining $u_k\leq b_k$ and convergence of $u_k-b_k$ to zero in
$L^p(\Omega)$ for every $1\leq p<\infty$.
This extension is used below to establish the structure of reachable
limits and to realize full cluster sets.

For a fixed bounded psh function $\varphi$, define the reachable set
\[
 \mathscr R_b(\Omega,\varphi)
 :=\left\{\nu\in\cM_+(\Omega):\begin{array}{l}
 \text{there exist }u_j\in\PSH(\Omega)\cap L^\infty(\Omega),\\
 u_j\to\varphi\text{ in }L^1_{\loc}(\Omega),\quad
 \MA(u_j)\to\nu\text{ vaguely}
 \end{array}\right\}.
\]
Requiring every $u_j$ to be smooth defines
$\mathscr R_{\mathrm{sm}}(\Omega,\varphi)$.
Equivalently, one may require every $u_j$ to be continuous,
by Lemma~\ref{lem:continuous-approximation}. These sets collect
individual measure limits across all approximating sequences.  For
sets of measures, $\mathcal A+\mathcal B$ denotes
$\{\alpha+\beta:\alpha\in\mathcal A,\ \beta\in\mathcal B\}$.

\begin{maintheorem}
\label{thm:structure}\label{thm:addition}\label{thm:zero-intrinsic}\label{thm:clusters}
Let $\Omega\subset\C^n$ ($n\geq2$) be a bounded domain and let
$\varphi\in\PSH(\Omega)\cap L^\infty(\Omega)$.
For $a\in\{b,\mathrm{sm}\}$, the set
$\mathscr R_a(\Omega,\varphi)$ is vaguely closed and satisfies
\[
 \mathscr R_a(\Omega,\varphi)+\cM_+(\Omega)
 =\mathscr R_a(\Omega,\varphi),\qquad
 0\in\mathscr R_a(\Omega,\varphi)
 \ \Longleftrightarrow\
 \mathscr R_a(\Omega,\varphi)=\cM_+(\Omega).
\]
A set $F\subset\cM_+(\Omega)$ is the full vague cluster set of
$\MA(u_j)$ for a bounded psh sequence $u_j\to\varphi$ in $L^1_{\loc}(\Omega)$
if and only if
\[
 F\subset\mathscr R_b(\Omega,\varphi)
 \quad\text{and}\quad F\text{ is vaguely closed}.
\]
This includes $F=\varnothing$.
If $\mathscr R_{\mathrm{sm}}(\Omega,\varphi)\ne\varnothing$, the same
conclusion holds with smooth bounded approximants and
$\mathscr R_{\mathrm{sm}}$ in place of $\mathscr R_b$.
\end{maintheorem}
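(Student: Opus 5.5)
The plan is to reduce every assertion to diagonal arguments in two metrizable spaces, with Theorem~\ref{thm:tracking} supplying the only analytic input. Fix a countable family $(f_m)\subset C_c(\Omega)$, dense in $C_c(\Omega)$ for the inductive-limit topology, and containing functions $0\le f\le1$ equal to $1$ on each set of a compact exhaustion. Put
\[
 d_{\mathrm v}(\alpha,\beta)=\sum_m 2^{-m}\min\Bigl(1,\Bigl|\int f_m\,d\alpha-\int f_m\,d\beta\Bigr|\Bigr).
\]
This is the standard fact that $\cM_+(\Omega)$ with the vague topology is Polish. Convergence in $d_{\mathrm v}$ is equivalent to vague convergence, because the cutoff functions give local mass bounds. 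Consequently vaguely closed sets, in the sequential sense of the paper, are exactly the $d_{\mathrm v}$-closed sets. Let $d_1$ be a metric for $L^1_{\loc}(\Omega)$. I expect this metrization step to be the main technical point to verify carefully, since the measures may have infinite total mass. I would state it as a preliminary lemma, with the usual proof that $d_{\mathrm v}$-Cauchy sequences have locally bounded mass and converge by Riesz representation.

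\emph{Closedness.} Let $\nu_k\in\mathscr R_a(\Omega,\varphi)$ with $\nu_k\to\nu$ vaguely, and choose approximating sequences $u^k_j$ for each $\nu_k$. Pick $j_k$ such that $d_1(u^k_{j_k},\varphi)<1/k$ and $d_{\mathrm v}(\MA(u^k_{j_k}),\nu_k)<1/k$. Then $w_k=u^k_{j_k}$ witnesses $\nu\in\mathscr R_a$, and smoothness is preserved.

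\emph{Additivity.} Let $u_k\to\varphi$ with $\MA(u_k)\to\nu$, and let $\mu\in\cM_+(\Omega)$. Apply Theorem~\ref{thm:tracking} with $b_k=u_k$ and $\mu_k=\mu$, passing to a further diagonal choice if necessary. This gives bounded psh functions $w_k\le u_k$ with
\[
 \|w_k-u_k\|_{L^1(\Omega)}\to0,\qquad d_{\mathrm v}\bigl(\MA(w_k),\MA(u_k)+\mu\bigr)\to0.
\]
Hence $w_k\to\varphi$ and $\MA(w_k)\to\nu+\mu$. In the smooth case I would use the smoothness clause of the tracking theorem for continuous $b_k$; if that clause is not available, I would use Lemma~\ref{lem:continuous-approximation}. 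This proves $\mathscr R_a+\cM_+=\mathscr R_a$, because the reverse inclusion is trivial using $0\in\cM_+(\Omega)$. If $0\in\mathscr R_a$, then $\cM_+=0+\cM_+\subset\mathscr R_a$, which gives the stated equivalence.

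\emph{Cluster sets, necessity.} Suppose $u_j\to\varphi$. Every cluster point of $\MA(u_j)$ is the limit along a subsequence that still converges to $\varphi$, so it lies in $\mathscr R_b$. The set $\Clust_{\mathrm v}(\MA(u_j))$ is vaguely closed by the same diagonal argument: if $\nu_k$ are cluster points converging to $\nu$, choose increasing indices $j_k$ with $d_{\mathrm v}(\MA(u_{j_k}),\nu_k)<1/k$.

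\emph{Cluster sets, sufficiency with $F\neq\varnothing$.} Let $F\subset\mathscr R_a$ be closed. By separability, take a countable dense subset $\{\nu_m\}\subset F$ and a sequence $m_k$ in which every index occurs infinitely often. Since $\nu_{m_k}\in\mathscr R_a$, choose $u_k$ with $d_1(u_k,\varphi)<1/k$ and $d_{\mathrm v}(\MA(u_k),\nu_{m_k})<1/k$. Any subsequential vague limit of $\MA(u_k)$ is then a limit of a subsequence of $\nu_{m_k}$, so it lies in $\overline{\{\nu_m\}}=F$. Conversely, each $\nu\in F$ is a limit of some $\nu_{m_i}$. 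Since each $m_i$ recurs infinitely often, I can choose increasing $k_i$ with $m_{k_i}=m_i$, so that $\MA(u_{k_i})\to\nu$.

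\emph{Cluster sets, $F=\varnothing$.} Start from some sequence $v_k\to\varphi$. In the bounded case take $v_k=\varphi$. In the smooth case use the hypothesis $\mathscr R_{\mathrm{sm}}\ne\varnothing$, which gives $v_k$ with $\MA(v_k)\to\nu_0$. Apply Theorem~\ref{thm:tracking} with $\mu_k=k\lambda|_B$, where $\lambda$ is Lebesgue measure and $B\Subset\Omega$ is a ball. The resulting $w_k\to\varphi$ satisfy $\MA(w_k)(K)\to\infty$ for a compact $K\supset B$ whose interior contains $B$. No subsequence can then converge vaguely, since vague convergence forces locally bounded mass, so the cluster set is empty.
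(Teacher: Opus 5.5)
Your proposal is correct and takes essentially the same route as the paper. It uses Theorem~\ref{thm:tracking} with $\mu_k=\mu$ for addition, a single diagonal selection for closedness and (through a dense sequence with each term recurring infinitely often) for realizing nonempty closed $F\subset\mathscr R_a(\Omega,\varphi)$, and Theorem~\ref{thm:tracking} with a growing mass for $F=\varnothing$. The only differences are cosmetic: you use $k\lambda|_B$ with a direct mass blow-up argument instead of $k\delta_{x_0}$ with equality of cluster sets, and you use your own metric. The latter is harmless, because each use of the $d_{\mathrm v}$-conclusion of Theorem~\ref{thm:tracking} (stated for the paper's fixed smooth family) passes only through vague convergence or through a nonnegative cutoff equal to one on $B$, which that family contains.
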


The sequence version of Theorem~\ref{thm:fixed-addition},
Theorem~\ref{thm:tracking}, supplies the addition property in
Theorem~\ref{thm:structure}; diagonal selection gives its closure
and cluster-set assertions. To deduce
Theorem~\ref{thm:classification}, it therefore suffices to make zero
reachable. The addition property then gives
$\mathscr R_b(\Omega,\varphi)=\cM_+(\Omega)$, and every vaguely
closed subset is realizable. The smooth conclusions follow in the
same way from $0\in\mathscr R_{\mathrm{sm}}(\Omega,\varphi)$.

For a bounded maximal $\varphi$, the constant sequence already
makes zero reachable.
For a general bounded psh function on a pseudoconvex domain in
$\C^n$ ($n\geq2$), Lelong's theorem \cite{Lelong}, as stated in
\cite{BacklundPersson}*{Theorem~2.1, p.~263}, gives continuous
maximal psh functions defined on the whole domain and converging
to $\varphi$ in $L^1_{\loc}$.
Their Monge--Amp\`ere measures vanish, but the functions need not
be globally bounded.
Under the compact-hull condition
\eqref{eq:compact-hulls} in Section~\ref{sec:geometry}, we modify
these functions by pasting to obtain bounded continuous psh
approximants that remain maximal near successive compact sets.
Their measures therefore tend vaguely to zero, and smoothing
gives zero reachability in the smooth class.
For the pluripolar-deletion case, we extend $\varphi$ to the
larger hyperconvex domain, construct approximants there with
measure limit zero, and restrict them to the original domain.
This also covers examples where the compact-hull condition fails.
Whether such bounded approximation with measure limit zero
exists for every bounded psh function on every bounded pseudoconvex
domain remains open.

We also consider arbitrary pseudoconvex domains in $\C^n$ ($n\geq2$).
For every locally bounded psh limit function, every nonnegative
Radon measure can be realized as a vague limit using locally bounded
psh approximants. Under condition~\textup{(V)}, defined in
Section~\ref{sec:localization}, globally bounded approximants are
available when the limit function is bounded. The cylinder example
in that section shows why global boundedness cannot be retained
on every unbounded pseudoconvex domain.

The remainder of this paper is organized as follows.
Sections~\ref{sec:slices} and~\ref{sec:addition-construction}
develop the analytic construction and establish
Theorem~\ref{thm:fixed-addition} and its sequence version.
Section~\ref{sec:tracking-results} proves
Theorem~\ref{thm:structure}.
Section~\ref{sec:geometry} establishes zero reachability under
the stated hypotheses, completes the proof of
Theorem~\ref{thm:classification}, and discusses examples.
Section~\ref{sec:localization} proves the extensions to general
domains and discusses the obstructions to global boundedness.
Section~\ref{sec:dimension-one} discusses the restrictions imposed
by dimension and common local bounds, and states the remaining
approximation question. Two appendices supplement the main argument.
Appendix~\ref{app:divisor} computes the truncation limit when a
locally bounded psh function is placed inside the maximum.
Its treatment of discontinuous functions uses
\cite{ABW2019}*{Theorem~2.1}; this calculation is not used in the proof of the main results,
where mixed terms are controlled by Chern--Levine--Nirenberg estimates.
Appendix~\ref{app:metric} explains the distinction between
comparison in the fixed vague metric and convergence of signed
measure differences against continuous compactly supported test
functions, and shows how common local mass bounds connect them.

\section{Small potentials with prescribed measures}\label{sec:slices}\label{sec:twisted}

Throughout this section, $\Omega\subset\C^n$ ($n\geq2$) is a bounded domain.  We construct continuous bounded psh functions that approach
zero while their Monge--Amp\`ere measures approach a prescribed measure
and their lower-degree currents have small local trace mass.
For finitely many test functions, the construction uses only the finite
mass on their compact supports.  We approximate this mass by atoms,
spread each atom on a distinct parallel slice, and use logarithmic
truncations to obtain the required psh functions.

With the normalization fixed in the Introduction, the Poincar\'e--Lelong
formula is
\begin{equation}\label{eq:PL-normalization}
 \ddc\log|f|^2=[\operatorname{div}f]
\end{equation}
for every nonzero holomorphic function $f$, where $\operatorname{div}(f)$ denotes the divisor of $f$. We write $\lambda_{2d}$ for
Lebesgue measure on $\C^d\simeq\mathbb R^{2d}$.

If $u_1,\ldots,u_p\in\PSH(\Omega)\cap L^\infty_{\loc}(\Omega)$ and
$1\leq p\leq n$, then
\[
 \ddc u_1\wedge\cdots\wedge\ddc u_p
\]
is the Bedford--Taylor product \cite{BedfordTaylor1982}.  It is a positive
closed current with locally finite mass and is local in the plurifine, hence
in the ordinary, topology.  We use $\MA(u)$ only for locally bounded psh functions.  Such a
function is \emph{maximal} on an open set $U$ if comparison holds on every
relatively compact subdomain of $U$.  This is equivalent to $\MA(u)=0$ on
$U$ \cite{BacklundPersson}*{Theorem~2.3}.

An effective divisor is \emph{reduced} if each irreducible component
has multiplicity one. Let $D=\bigcup_{r=1}^N D_r$ be a reduced divisor on $\Omega$ with pairwise disjoint smooth components, and let
$i_r:D_r\hookrightarrow\Omega$ denote the inclusions. Its integration
current is
\[
 [D]=\sum_{r=1}^N[D_r]=\sum_{r=1}^N(i_r)_*1,
\]
where $1$ denotes the current on $D_r$ defined by the constant
function $1$.
For locally bounded psh functions
$u_1,\ldots,u_p$, where $0\leq p\leq n-1$, define
\begin{equation}\label{eq:divisor-restriction-product}
 [D]\wedge\ddc u_1\wedge\cdots\wedge\ddc u_p
 :=\sum_{r=1}^N(i_r)_*
 \bigl(\ddc(u_1|_{D_r})\wedge\cdots
             \wedge\ddc(u_p|_{D_r})\bigr).
\end{equation}
This is a positive closed current with locally finite mass; in top degree it is a
positive Radon measure.

\subsection{Measures on parallel slices}
Write $m=n-1$ and $z=(z_1,z')\in\C\times\C^m$. For $a\in\C$, set
\[
 U_a=\{z'\in\C^m:(a,z')\in\Omega\},
 \qquad i_a:U_a\to\Omega,\quad i_a(z')=(a,z').
\]
The image
\[
 D_a:=i_a(U_a)=\Omega\cap\{z_1=a\}
\]
is called a \emph{slice} of $\Omega$: it is the intersection of
$\Omega$ with the complex hyperplane obtained by fixing the first
coordinate.
A \emph{slice measure} on $\Omega$ means a finite sum
$\sigma=\sum_{r=1}^N(i_{a_r})_*\eta_r$, where the $a_r$ are distinct
and $\eta_r$ is a nonnegative Radon measure on $U_{a_r}$.  Explicitly,
\[
 \int_\Omega g\,d\sigma
 =\sum_{r=1}^N\int_{U_{a_r}}g(a_r,z')\,d\eta_r(z')
 \qquad(g\in C_c(\Omega)).
\]
Each slice $D_{a_r}$ is closed in $\Omega$, so its inclusion is proper and the
push-forward is a Radon measure supported on that slice.
When a measure is defined on a larger subset of $\C^m$, its push-forward
is restricted to $\Omega$.

Since $\Omega$ is bounded, choose
$B_R=B_{\C^m}(0,R)$ so that the $z'$-projection of $\overline\Omega$ lies in
its interior, and choose
\begin{equation}\label{eq:normalizing-radius}
 R_1:=2\sup_{z\in\Omega}|z_1|+2.
\end{equation}

\begin{lemma}\label{lem:logarithmic-potential}
Let $m\geq1$, $R>0$, $b\in B_R\subset\C^m$, $c>0$, and $\delta>0$.
Set
\[
 H_{b,\delta}(z')=c^{1/m}\log\bigl(|z'-b|^2+\delta^2\bigr).
\]
This is a smooth strictly psh function on $\C^m$, and
\begin{equation}\label{eq:explicit-density}
 (\ddc H_{b,\delta})^m
 =c\frac{m!}{\pi^m}
 \frac{\delta^2}{(|z'-b|^2+\delta^2)^{m+1}}\,d\lambda_{2m}.
\end{equation}
The measure has total mass $c$ and tends vaguely to $c\delta_b$ as
$\delta\downarrow0$.  For every $\rho>0$ its mass outside $B(b,\rho)$ is
\begin{equation}\label{eq:explicit-tail}
 c\left[1-\left(\frac{\rho^2}{\rho^2+\delta^2}\right)^m\right]
 \leq\frac{cm\delta^2}{\rho^2}.
\end{equation}
For fixed $b,c,\delta$, a sufficiently large constant $C$ makes
$W=H_{b,\delta}+C$ positive with $\log W$ psh on a neighborhood of
$\overline{B_R}$.
\end{lemma}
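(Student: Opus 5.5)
Translating by $b$, it suffices to treat $b=0$ and $c=1$: the factor $c^{1/m}$ multiplies $(\ddc H)^m$ by $c$. Write $\psi=\log(|z'|^2+\delta^2)$. Smoothness is clear since $|z'|^2+\delta^2\geq\delta^2>0$.

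For the Monge--Amp\`ere density, set $t=|z'|^2$ and $\psi=\chi(t)$ with $\chi(t)=\log(t+\delta^2)$. The standard formula for radial functions gives
\[
 (\ddc\chi(|z'|^2))^m=\Bigl(\tfrac{i}{2\pi}\Bigr)^m m!\,\chi'(t)^{m-1}\bigl(\chi'(t)+t\chi''(t)\bigr)\,\textstyle\bigwedge_k dz_k\wedge d\bar z_k.
\]
The complex Hessian is $\chi' I+\chi''\bar z' z'^{T}$, and its determinant is $\chi'^{m-1}(\chi'+t\chi'')$. Here
\[
 \chi'=\frac{1}{t+\delta^2},\qquad
 \chi'+t\chi''=\frac{\delta^2}{(t+\delta^2)^2},
\]
so the Hessian is positive definite, which gives strict plurisubharmonicity. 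The determinant equals $\delta^2/(t+\delta^2)^{m+1}$. Using $\bigl(\tfrac{i}{2}\bigr)^m\bigwedge dz_k\wedge d\bar z_k=d\lambda_{2m}$, the constant in front is $(1/\pi)^m m!$, which yields \eqref{eq:explicit-density}.

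For the mass, note that the mass inside $B(0,\rho)$ is $\int_{B_\rho}(\ddc\psi)^m$. By Stokes this equals $\int_{\partial B_\rho}d^c\psi\wedge(\ddc\psi)^{m-1}$, or one can integrate in polar coordinates directly. Using $\lambda_{2m}$ of the sphere element $\frac{\pi^m}{(m-1)!}t^{m-1}dt$ in the variable $t=r^2$, the mass is
\[
 m\int_0^{\rho^2}\frac{\delta^2 t^{m-1}}{(t+\delta^2)^{m+1}}\,dt.
\]
The antiderivative is $\bigl(t/(t+\delta^2)\bigr)^m$; differentiating gives $m t^{m-1}\delta^2/(t+\delta^2)^{m+1}$. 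So the mass in $B_\rho$ is $\bigl(\rho^2/(\rho^2+\delta^2)\bigr)^m$, and letting $\rho\to\infty$ gives total mass $1$. This gives the equality in \eqref{eq:explicit-tail}.

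The inequality follows from Bernoulli's inequality:
\[
 1-(1-s)^m\leq ms\quad\text{with}\quad s=\frac{\delta^2}{\rho^2+\delta^2}\leq\frac{\delta^2}{\rho^2}.
\]
Vague convergence to $c\delta_b$ follows because the total mass is constantly $c$ and, for each fixed $\rho$, the tail tends to zero as $\delta\to0$. Against $g\in C_c$, one splits the integral into $B(b,\rho)$, where $g$ is close to $g(b)$, and its complement.

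The last claim is the only step needing care. The function $H$ is bounded on a neighborhood $V$ of $\overline{B_R}$, say on $B_{R+1}$, so $W=H+C>0$ for large $C$. Compute $\ddc\log W=\ddc H/W-dW\wedge d^cW/W^2$. It suffices that $W\,\ddc H\geq dH\wedge d^cH$ as forms on $V$. By strict plurisubharmonicity and compactness of $\overline V$, $\ddc H\geq\varepsilon\,\ddc|z'|^2$ there for some $\varepsilon>0$, and $|dH|$ is bounded there. Thus $dH\wedge d^cH\leq K\,\ddc|z'|^2$ for some $K$, and choosing $C$ with $\varepsilon\inf_V W\geq K$ finishes the argument. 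This is routine, and I expect no real obstacle.
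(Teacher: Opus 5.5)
Your proof is correct and follows essentially the same route as the paper: your radial determinant $\chi'^{m-1}(\chi'+t\chi'')$ is the paper's eigenvalue computation, and the mass formula comes from the same polar integration, with $t=r^2$ and Bernoulli's $1-(1-s)^m\leq ms$ as cosmetic variants of the paper's $1-(1+x)^{-m}\leq mx$. The vague limit and the $\log W$ claim are handled exactly as in the paper, the latter via $dd^cH\geq\lambda\,dd^c|z'|^2$ and $dH\wedge d^cH\leq M\,dd^c|z'|^2$ on a slightly larger ball.
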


\begin{proof}
Write $a=c^{1/m}$ and $s=|z'-b|^2$. For $z'\ne b$, the complex Hessian of $H$ has eigenvalue
$a/(s+\delta^2)$ on the complex orthogonal complement of $z'-b$,
with multiplicity $m-1$, and eigenvalue
$a\delta^2/(s+\delta^2)^2$ on the complex line spanned by $z'-b$.
At $z'=b$, all eigenvalues equal $a/\delta^2$.  Thus its determinant
is $c\delta^2/(s+\delta^2)^{m+1}$, which gives
\eqref{eq:explicit-density} with our normalization.  Polar integration gives
\[
 \int_{B(b,\rho)}(\ddc H_{b,\delta})^m
 =2cm\delta^2\int_0^\rho
       \frac{r^{2m-1}}{(r^2+\delta^2)^{m+1}}\,dr
 =c\left(\frac{\rho^2}{\rho^2+\delta^2}\right)^m.
\]
Letting $\rho\to\infty$ gives total mass $c$; subtraction and
$1-(1+x)^{-m}\leq mx$ give \eqref{eq:explicit-tail}.

For every bounded continuous function $g$ on $\C^m$,
\[
 \left|\int_{\C^m}g\,(dd^cH_{b,\delta})^m-cg(b)\right|
 \leq c\sup_{|z'-b|<\rho}|g(z')-g(b)|
 +2\|g\|_\infty\frac{cm\delta^2}{\rho^2}.
\]
By continuity of $g$ at $b$, the first term can be made arbitrarily
small by choosing $\rho>0$ sufficiently small. For fixed $\rho$,
the second term tends to zero as $\delta\downarrow0$.

On a slightly larger closed ball, strict positivity and smoothness give
$\lambda,M>0$ with
$\ddc H_{b,\delta}\geq\lambda\ddc|z'|^2$ and
$dH_{b,\delta}\wedge d^cH_{b,\delta}\leq M\ddc|z'|^2$.
Choose $C$ so that $W>M/\lambda+1$ there.  Then
\[
 \ddc\log W
 =W^{-2}\bigl(W\ddc H_{b,\delta}
       -dH_{b,\delta}\wedge d^cH_{b,\delta}\bigr)\geq0.
\]
Adding $C$ does not change \eqref{eq:explicit-density}.
\end{proof}

\begin{proposition}[Approximation by measures on parallel slices]\label{prop:slices}
Let $\Omega\subset\C^n$ ($n\geq2$) be a bounded domain, and choose
$B_R$ and $R_1$ as above.  The measures of the form
\begin{equation}\label{eq:slice-measure}
 \sigma=\sum_{r=1}^N
 \left((i_{a_r})_*(\ddc W_r)^m\right)\big|_\Omega,
\end{equation}
where the $a_r$ are distinct, $\{z_1=a_r\}\cap\Omega\neq\varnothing$, and
each $W_r$ is supplied by Lemma~\ref{lem:logarithmic-potential}, are vaguely
dense in $\cM_+(\Omega)$.

For every such $\sigma$, let
\[
 D:=\bigcup_{r=1}^N\bigl(\{z_1=a_r\}\cap\Omega\bigr),
 \qquad F(z):=\prod_{r=1}^N\frac{z_1-a_r}{R_1}.
\]
Then $D=\operatorname{div}_\Omega(F)$ is reduced and there is a smooth
psh function $V_0> 0$ on a neighborhood of $\overline\Omega$ such that
\begin{equation}\label{eq:programmed-slice}
 [D]\wedge(\ddc V_0)^m=\sigma\quad\text{on }\Omega.
\end{equation}
Moreover, $\log|F|^2\leq0$ on $\Omega$.
\end{proposition}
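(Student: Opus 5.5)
The plan has two parts: the density assertion, which rests on Lemma~\ref{lem:logarithmic-potential}, and the construction of $V_0$, which glues the slice potentials $W_r$ with a regularized maximum.

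\textbf{Density.} Since $\cM_+(\Omega)$ carries the vague topology, it suffices to show the following. Given $\mu\in\cM_+(\Omega)$, finitely many $g_1,\dots,g_k\in C_c(\Omega)$ and $\epsilon>0$, there is a $\sigma$ of the form \eqref{eq:slice-measure} with $|\int g_i\,d\sigma-\int g_i\,d\mu|<\epsilon$ for all $i$. The zero measure is the empty sum $N=0$.

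1. \emph{Atomic approximation.} Let $K$ be a compact set containing all $\supp g_i$ together with a small neighbourhood. Partition $K$ into finitely many Borel pieces of small diameter. Replace $\mu|_K$ by $\sum_r c_r\delta_{p_r}$, where $p_r$ lies in the $r$-th piece and $c_r$ is the $\mu$-mass of that piece. Uniform continuity of the $g_i$ makes the error at most $\epsilon/3$.

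2. \emph{Distinct first coordinates.} Move each $p_r=(a_r,b_r)$ slightly inside $\Omega$ so that the $a_r$ become distinct. Uniform continuity again costs at most $\epsilon/3$. Each slice $\{z_1=a_r\}$ then meets $\Omega$, and $b_r\in U_{a_r}\subset B_R$.

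3. \emph{Smoothing each atom.} Replace $c_r\delta_{b_r}$ by $(\ddc W_r)^m$, where $W_r=H_{b_r,\delta}+C_r$ is given by Lemma~\ref{lem:logarithmic-potential} with $c=c_r$. Restricting the push-forward to $\Omega$ is harmless, because each $g_i\circ i_{a_r}$ is compactly supported in $U_{a_r}$ and extends by zero to a bounded continuous function on $\C^m$. The vague convergence $(\ddc H_{b,\delta})^m\to c\delta_b$ as $\delta\downarrow0$, with the tail bound \eqref{eq:explicit-tail}, then makes the error at most $\epsilon/3$ for $\delta$ small.

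\textbf{The divisor and the bound on $F$.} $F$ is a product of distinct affine factors in $z_1$. Hence $\operatorname{div}_\Omega F$ is the union of the disjoint smooth slices $\{z_1=a_r\}\cap\Omega$, each with multiplicity one, so $D$ is reduced. Each $a_r$ is the first coordinate of a point of $\Omega$. Therefore, for $z\in\Omega$,
\[
|z_1-a_r|\le 2\sup_\Omega|z_1|<R_1 .
\]
Each factor of $F$ then has modulus less than $1$, so $\log|F|^2\le0$ on $\Omega$.

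\textbf{Construction of $V_0$.} I will build a smooth psh function on a neighbourhood of $\overline\Omega$ whose restriction to each slice $D_{a_r}$ equals $W_r$ plus a constant. Let $L$ be a compact neighbourhood of $\overline\Omega$ on which every $W_r$ is smooth, and let $M$ bound $\max_r\sup_L|W_r|$.

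1. \emph{Pieces.} For $K>0$ put
\[
\psi_r(z)=W_r(z')-K|z_1-a_r|^2+K|z_1|^2 .
\]
Expanding the square, $-K|z_1-a_r|^2+K|z_1|^2$ is pluriharmonic, so $\psi_r$ is psh. On the slice $z_1=a_r$ it equals $W_r(z')+K|a_r|^2$, so $\ddc$ of its restriction is $\ddc W_r$.

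2. \emph{Choice of $K$.} Fix $\eta>0$ with $|a_r-a_s|>3\eta$ for $r\neq s$. For $|z_1-a_r|<\eta$ and $s\ne r$ we have $|z_1-a_s|>2\eta$ and
\[
\psi_r-\psi_s\ \ge\ -2M-K\eta^2+4K\eta^2\ =\ 3K\eta^2-2M .
\]
Choose $K$ so large that this is at least $1$.

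3. \emph{Gluing.} Let $V_0=\widetilde{\max}(\psi_1,\dots,\psi_N)+C$, using a regularized maximum with smoothing parameter below $1$. It is smooth and psh. On the tube $\{|z_1-a_r|<\eta\}$, step 2 shows that $\psi_r$ exceeds every other $\psi_s$ by at least $1$, so there $V_0=\psi_r+C$. Choose $C$ large so that $V_0>0$ on $L$.

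4. \emph{Conclusion.} $V_0|_{D_{a_r}}=W_r+\text{const}$. By the definition \eqref{eq:divisor-restriction-product},
\[
[D]\wedge(\ddc V_0)^m=\sum_{r=1}^N(i_{a_r})_*(\ddc W_r)^m\big|_\Omega=\sigma ,
\]
which is \eqref{eq:programmed-slice}.

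The main obstacle is this gluing step. One needs a single psh function with prescribed restrictions on several parallel slices. The added term $K|z_1|^2$ is what makes it possible: it cancels $-K|z_1-a_r|^2$ up to a pluriharmonic function, so each piece stays psh while being forced down away from its own slice.
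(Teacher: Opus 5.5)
Your proof is correct. The density argument and the bound on $F$ essentially follow the paper. You approximate by atoms on a compact set, perturb to distinct first coordinates, and smooth each atom with a common $\delta$ from Lemma~\ref{lem:logarithmic-potential}; for $F$ you use $|z_1-a_r|\leq 2\sup_\Omega|z_1|<R_1$. The genuine difference is the construction of $V_0$.

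\textbf{The paper's route.} It interpolates with Lagrange polynomials: $V_0=\sum_r|L_r(z_1)|^2W_r(z')$ with $L_r(a_s)=\delta_{rs}$. Each summand is psh because $\log|L_r|^2+\log W_r$ is psh. This is exactly why Lemma~\ref{lem:logarithmic-potential} arranges $W_r>0$ with $\log W_r$ psh near $\overline{B_R}$. The payoff is a closed formula with no auxiliary parameters, and $V_0|_{D_s}=W_s$ exactly.

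\textbf{Your route.} You take a regularized maximum of $\psi_r=W_r(z')+2K\operatorname{Re}(\bar a_r z_1)-K|a_r|^2$. This needs only plurisubharmonicity of the $W_r$, so the log-psh clause of the lemma becomes unnecessary. It gives $V_0|_{D_{a_r}}=W_r+\mathrm{const}$, which suffices because adding a constant does not change $(dd^c\cdot)^m$. The cost is the regularized-maximum machinery and the auxiliary choices of $\eta$, $K$ and $C$.

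Two minor bookkeeping points:
\begin{itemize}
\item The smoothing parameter depends on the normalization. With Demailly's $M_\eta$, a non-dominant $\psi_s$ drops out only if $\psi_s+\eta_s\leq\max_{k\neq s}(\psi_k-\eta_k)$. With equal parameters $\eta_0$ and a gap of $1$, you therefore need $\eta_0\leq 1/2$, or else enlarge $K$.
\item If you allow $N=0$, set $V_0$ equal to a positive constant, since the regularized maximum of an empty family is undefined. The paper avoids this case by inserting a tiny atom $c\,\delta_x$.
\end{itemize}
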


\begin{proof}
Fix $g_1,\ldots,g_s\in C_c(\Omega)$,
$\mu\in\cM_+(\Omega)$, and $\eta>0$.  Choose $L\subset\subset\Omega$ whose interior
contains $\bigcup_{j=1}^s\supp g_j$.  Since $\mu(L)<\infty$ and the finitely many functions $g_j$ are
continuous on the compact set $L$, there exist points
$(a_r,b_r)\in L$ and positive weights $c_r$ such that
\[
 \left|
 \int_L g_j\,d\mu-\sum_{r=1}^N c_r g_j(a_r,b_r)
 \right|<\frac{\eta}{3}
 \qquad\text{for every }j.
\]
Thus the positive atomic measure
$\sum_{r=1}^N c_r\delta_{(a_r,b_r)}$ approximates $\mu|_L$
against all the prescribed test functions.  Since $\Omega$ is open, each first coordinate $a_r$ can be perturbed
slightly while keeping $(a_r,b_r)$ in $\Omega$. By continuity of the
finitely many test functions, we can choose pairwise distinct
coordinates $\widetilde a_r$ such that
$(\widetilde a_r,b_r)\in\Omega$ and
\[
 \sum_{r=1}^N c_r
 \bigl|g_j(\widetilde a_r,b_r)-g_j(a_r,b_r)\bigr|
 <\frac{\eta}{3}
 \qquad\text{for every }j.
\]
We henceforth write $a_r$ for these perturbed coordinates.
If the initial atomic measure is zero, replace it by $c\delta_x$,
where $x\in\Omega$ and $c>0$ is chosen so that
$c|g_j(x)|<\eta/3$ for every $j$.
Thus we may assume $N\geq1$, with pairwise distinct first
coordinates, at an additional error of less than $\eta/3$
for each test function.

For each $r$, the fiber
\[
 U_r:=\{z'\in B_R:(a_r,z')\in\Omega\}
\]
contains a ball $B(b_r,\rho_r)$ for some $\rho_r>0$.
For each test function $g_j$, the function $z'\mapsto g_j(a_r,z')$,
extended by zero outside $U_r$, is continuous with compact support in
$\C^m$.
Set $\nu_{r,\delta}=(dd^cH_{b_r,\delta})^m$.
Since $B(b_r,\rho_r)\subset U_r$,
Lemma~\ref{lem:logarithmic-potential} with mass $c_r$ and center $b_r$ gives
\[
 \int_{U_r}g_j(a_r,z')\,d\nu_{r,\delta}(z')
 \longrightarrow c_rg_j(a_r,b_r),\qquad
 \nu_{r,\delta}(\C^m\setminus U_r)
 \leq \frac{c_rm\delta^2}{\rho_r^2}.
\]
There are only finitely many slices and test functions, so we may
choose a common $\delta>0$ such that
\[
 \sum_{r=1}^N
 \left|
 \int_{U_r}g_j(a_r,z')\,d\nu_{r,\delta}(z')
 -c_rg_j(a_r,b_r)
 \right|<\frac{\eta}{3}
 \qquad\text{for every }j.
\]
For each $r$, choose the constant $C_r$ supplied by the lemma so that
$W_r:=H_{b_r,\delta}+C_r>0$ and $\log W_r$ is plurisubharmonic on a neighborhood of $\overline{B_R}$.
Adding $C_r$ does not change $(dd^cH_{b_r,\delta})^m$.
Hence the measure $\sigma$ defined by \eqref{eq:slice-measure}
satisfies
\[
 \left|\int_\Omega g_j\,d\sigma-\int_\Omega g_j\,d\mu\right|
 <\eta
 \qquad\text{for every }j,
\]
by combining the three preceding error estimates.
Since $\mu$, the finite family of test functions, and $\eta>0$
were arbitrary, these slice measures are vaguely dense in
$\cM_+(\Omega)$.

For $r=1,\ldots,N$, define the Lagrange polynomial
\[
 L_r(\zeta):=
 \prod_{\substack{1\leq s\leq N\\s\ne r}}
 \frac{\zeta-a_s}{a_r-a_s},
\]
with the empty product interpreted as $1$. Then
$L_r(a_s)=\delta_{rs}$ and $\sum_{r=1}^N L_r\equiv1$. Put
\[
 V_0(z_1,z'):=\sum_{r=1}^N|L_r(z_1)|^2W_r(z'),
\]
 then $V_0>0$ is smooth and psh on a neighborhood of $\overline\Omega$.  On
$D_s=\{z_1=a_s\}\cap\Omega$ one has
$V_0|_{D_s}=W_s$, and the components $D_s$ are disjoint.  This gives
\eqref{eq:programmed-slice}.  Finally,
\eqref{eq:normalizing-radius} implies $|F|\leq1$ on $\Omega$.
\end{proof}

\subsection{Control of the potential}
\label{sec:programming}

Having constructed the slice measure, we next control the size of
the perturbation in $L^p(\Omega)$. The following estimate bounds
the logarithmic potential uniformly in the slice locations.
For $N$ slices, it gives a bound of the form $C_{\Omega,p}N$,
so multiplying the potential by $\varepsilon$ makes its
$L^p$ norm small whenever $\varepsilon N$ is small.

\begin{lemma}[Uniform logarithmic $L^p$ estimate]
\label{lem:uniform-log-Lp}
Let $\Omega\subset\C^n$ ($n\geq2$) be a bounded domain, let
$1\leq p<\infty$, and let $R_1$ satisfy
\eqref{eq:normalizing-radius}.  There is $C_{\Omega,p}<\infty$ such that
\[
 \left\|\log\left|\frac{z_1-a}{R_1}\right|^2
 \right\|_{L^p(\Omega)}\leq C_{\Omega,p}
\]
for every $a$ in the $z_1$-projection of $\Omega$.
\end{lemma}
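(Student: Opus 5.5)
The estimate is a Fubini reduction to a one-variable computation, using only boundedness of $\Omega$. Let $K_1$ be the $z_1$-projection of $\Omega$, a bounded subset of the disc $\{|\zeta|\leq R_1/2-1\}$ by \eqref{eq:normalizing-radius}. Recall that $B_R$ was chosen to contain the $z'$-projection of $\overline\Omega$. Then
\[
 \Omega\subset \{|z_1|<R_1/2\}\times B_R .
\]
For $a\in K_1$ and $z_1\in K_1$ we have $|z_1-a|\leq R_1-2<R_1$. Hence the function $\log|(z_1-a)/R_1|^2$ is nonpositive on $\Omega$, and only its negative part has to be controlled. It depends only on $z_1$, so by Fubini
\[
 \int_\Omega\Bigl|\log\Bigl|\frac{z_1-a}{R_1}\Bigr|^2\Bigr|^p\,d\lambda_{2n}
 \leq \lambda_{2m}(B_R)\int_{|z_1|<R_1/2}
 \Bigl|\log\Bigl|\frac{z_1-a}{R_1}\Bigr|^2\Bigr|^p\,d\lambda_2(z_1).
\]

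The next step removes the dependence on $a$. Substitute $w=(z_1-a)/R_1$. The disc $\{|z_1|<R_1/2\}$ maps into the disc $\{|w|<1\}$, because $|z_1-a|<R_1/2+R_1/2-1<R_1$. The Jacobian of the substitution is $R_1^2$. We may enlarge the domain of integration, since the integrand is nonnegative. This gives
\[
 \int_{|z_1|<R_1/2}\Bigl|\log\Bigl|\frac{z_1-a}{R_1}\Bigr|^2\Bigr|^p\,d\lambda_2
 \leq R_1^2\int_{|w|<1}\bigl|\log|w|^2\bigr|^p\,d\lambda_2(w).
\]
The right-hand side no longer depends on $a$.

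The final point is that this last integral is finite for every finite $p$. In polar coordinates it equals
\[
 2\pi\int_0^1 (2|\log r|)^p\,r\,dr .
\]
With $r=e^{-t}$ this becomes
\[
 2\pi\,2^p\int_0^\infty t^pe^{-2t}\,dt=2\pi\,2^p\,\Gamma(p+1)\,2^{-p-1}<\infty .
\]
Therefore one may take
\[
 C_{\Omega,p}=\bigl(\lambda_{2m}(B_R)\,R_1^2\,\pi\,\Gamma(p+1)\bigr)^{1/p}.
\]

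There is no real obstacle in this argument. The only point needing care is the uniformity in $a$, and the translation argument above handles it, together with the inclusion of the translated disc in the unit disc that \eqref{eq:normalizing-radius} guarantees.
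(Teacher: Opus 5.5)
Your proof is correct and takes essentially the same route as the paper: Fubini reduces the estimate to a one-variable integral, a translation in $z_1$ removes the dependence on $a$, and $\int r\,|\log r|^p\,dr$ is finite near $0$. The only difference is cosmetic: you use the specific normalization of $R_1$ to land in the unit disc and compute the constant $\pi\,\Gamma(p+1)$ explicitly, whereas the paper translates into a fixed disc $\Delta_{2R_0}$ and only notes that the radial integral is finite and independent of $a$.
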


\begin{proof}
Write $\Delta_r=\{\zeta\in\C:|\zeta|<r\}$.  Choose $R_0>0$ and a
bounded ball $B\subset\C^{n-1}$ such that
$\Omega\subset\Delta_{R_0}\times B$.  If
$a$ belongs to the $z_1$-projection of $\Omega$, then $|a|<R_0$, and the
translation $w=z_1-a$ maps $\Delta_{R_0}$ into the fixed disc
$\Delta_{2R_0}$.  Fubini's theorem gives
\begin{align*}
 &\int_\Omega
 \left|\log\left|\frac{z_1-a}{R_1}\right|^2\right|^p
 d\lambda_{2n}\leq\lambda_{2n-2}(B)
 \int_{|w|<2R_0}
 \left|\log\left|\frac{w}{R_1}\right|^2\right|^p d\lambda_2(w)\\
 &\qquad=2\pi\lambda_{2n-2}(B)
 \int_0^{2R_0}|2\log(r/R_1)|^p r\,dr.
\end{align*}
The final integral is finite, and  is independent of $a$.
\end{proof}

\subsection{Truncations and control of lower-degree currents}

Prescribing the top-degree measure alone does not control the mixed
terms that arise on adding a psh function.  We therefore use two scales
to retain the slice measure while making every lower-degree current
small.  The resulting trace estimates will control those mixed terms.

\begin{lemma}[Limits of logarithmic truncations]\label{thm:awnw}
Let $\Omega\subset\C^n$ ($n\geq2$) be a bounded domain, and let
$D,F,V_0$ be the data in Proposition~\ref{prop:slices}.  Put $m=n-1$.
For $\varepsilon>0$, set
\[
 q_\varepsilon=\varepsilon\log|F|^2,\qquad
 v_\varepsilon=\varepsilon^{-1/m}V_0,\qquad
 h_{\varepsilon,A}=\max\{q_\varepsilon,v_\varepsilon-A\}.
\]
For fixed $\varepsilon$ and every $1\leq r\leq n$,
\begin{equation}\label{eq:all-degree-limit}
 (\ddc h_{\varepsilon,A})^r
 \longrightarrow
 \varepsilon^{(n-r)/(n-1)}[D]\wedge(\ddc V_0)^{r-1}
 \qquad(A\to+\infty)
\end{equation}
as currents.  If $A\geq\sup_\Omega v_\varepsilon$, then
$h_{\varepsilon,A}$ is continuous, bounded, and psh, with
\begin{equation}\label{eq:floor-choice}
 \varepsilon\log|F|^2\leq h_{\varepsilon,A}\leq0.
\end{equation}
\end{lemma}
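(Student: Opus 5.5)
The plan has two parts: a quick treatment of the elementary properties when $A\geq\sup_\Omega v_\varepsilon$, and then the limit \eqref{eq:all-degree-limit}, reduced by rescaling to the truncation formula of \cite{AWNW}*{Theorem~1.5}.

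\emph{Elementary properties.} Since $\log|F|^2$ is psh and $V_0$ is smooth psh, $h_{\varepsilon,A}$ is psh. The lower bound in \eqref{eq:floor-choice} follows from $h_{\varepsilon,A}\geq q_\varepsilon$. For the upper bound, $q_\varepsilon\leq0$ by Proposition~\ref{prop:slices}, and $v_\varepsilon-A\leq0$ once $A\geq\sup_\Omega v_\varepsilon$; this supremum is finite because $V_0$ is smooth on a neighborhood of $\overline\Omega$. The function is bounded below because $v_\varepsilon-A\geq -A$, using $V_0>0$. It is continuous because it equals $v_\varepsilon-A$ near $D$, where $q_\varepsilon=-\infty$, and away from $D$ it is a maximum of continuous functions.

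\emph{The limit.} Write $h_{\varepsilon,A}=\varepsilon\max\{\log|F|^2,\ \varepsilon^{-1-1/m}V_0-A/\varepsilon\}$. With $\psi=\varepsilon^{-1-1/m}V_0$ and $B=A/\varepsilon\to\infty$, this gives
\[
(\ddc h_{\varepsilon,A})^r=\varepsilon^r\bigl(\ddc\max\{\log|F|^2,\psi-B\}\bigr)^r .
\]
The truncation formula \cite{AWNW}*{Theorem~1.5} (in the analytic-singularity form from \cite{Blocki2019}, for $\log|F|^2$ with $F$ holomorphic and $D$ reduced) should give
\[
\bigl(\ddc\max\{\log|F|^2,\psi-B\}\bigr)^r\to[D]\wedge(\ddc\psi)^{r-1}
\qquad(B\to+\infty).
\]
I would also verify this directly. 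Outside any neighborhood of $D$ the functions eventually equal $\psi-B$, so the current is $(\ddc\psi)^r$ there. Since $r-1\leq m$ and $D$ is a smooth hypersurface, this part contributes nothing to the limit; when $r=n$ the top-degree term on the complement of $D$ is $\varepsilon^{n}(\ddc\psi)^n$ restricted there, which must also be handled. This reveals a subtlety: at first sight the limit would contain an extra smooth term $(\ddc\psi)^r$. That is impossible, however, because $\max\{\log|F|^2,\psi-B\}=\log|F|^2$ wherever $\log|F|^2>\psi-B$, a set that exhausts the complement of $D$ as $B\to\infty$, and $(\ddc\log|F|^2)^r=0$ there. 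So I would argue as follows. On $\{\log|F|^2>\psi-B+\text{small}\}$, which increases to $\Omega\setminus D$, the current vanishes. Near $D$, use local coordinates in which $F=w\cdot(\text{unit})$ and compute the mass concentrated on the transition tube $\{|w|^2\approx e^{\psi-B}\}$. This is the standard one-variable computation: $\ddc\max\{\log|w|^2,\cdot\}$ yields the mean over the circle, giving $[D]\wedge(\ddc\psi)^{r-1}$. Substituting $\psi=\varepsilon^{-1-1/m}V_0$, the limit becomes
\[
\varepsilon^r\varepsilon^{-(r-1)(1+1/m)}[D]\wedge(\ddc V_0)^{r-1},
\]
and the exponent is
\[
r-(r-1)\frac{m+1}{m}=\frac{rm-(r-1)(m+1)}{m}=\frac{m+1-r}{m}=\frac{n-r}{n-1},
\]
matching \eqref{eq:all-degree-limit}.

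\emph{Main obstacle.} The main step is justifying the convergence near $D$ for merely smooth $\psi$, uniformly on compact sets. I would first try to cite \cite{AWNW}*{Theorem~1.5} together with Remark~5.7 directly. Failing that, I would use the identity $(\ddc h)^r=\ddc\bigl(h(\ddc h)^{r-1}\bigr)$ with induction on $r$, combined with Bedford--Taylor continuity along the decreasing sequence in $B$ for the factors $\psi-B$ (after subtracting constants), to pass to the limit degree by degree.
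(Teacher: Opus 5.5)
Your proposal is correct and is essentially the paper's proof: the elementary properties are checked the same way, and the limit comes from \cite{AWNW}*{Theorem~1.5}. The paper applies that theorem directly to $q_\varepsilon$ and $v_\varepsilon$ after verifying $q_\varepsilon\in\mathcal G(\Omega)$, $\langle\ddc q_\varepsilon\rangle^r=0$, $S_1(q_\varepsilon)=\varepsilon[D]$ and $S_j(q_\varepsilon)=0$ for $j\geq2$, so your rescaling by $\varepsilon$ is only different bookkeeping. Your fallback can also be made rigorous, but what drives it is locality, not monotone continuity (the decreasing limit $q_\varepsilon$ is unbounded near $D$): for $r\geq2$ the current $(\ddc h_{\varepsilon,A})^{r-1}$ vanishes on the open set $\{q_\varepsilon>v_\varepsilon-A\}$, where $q_\varepsilon$ is pluriharmonic, and $h_{\varepsilon,A}=v_\varepsilon-A$ on its complement, so $(\ddc h_{\varepsilon,A})^r=\ddc\bigl(h_{\varepsilon,A}(\ddc h_{\varepsilon,A})^{r-1}\bigr)=\ddc v_\varepsilon\wedge(\ddc h_{\varepsilon,A})^{r-1}=(\ddc v_\varepsilon)^{r-1}\wedge\ddc h_{\varepsilon,A}\to\varepsilon(\ddc v_\varepsilon)^{r-1}\wedge[D]$.
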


\begin{proof}
Fix $\varepsilon>0$ and write
\[
 q=q_\varepsilon=\varepsilon\log|F|^2\leq 0,
 \qquad v=v_\varepsilon=\varepsilon^{-1/(n-1)}V_0,
 \qquad h_A=\max\{q,v-A\}\geq -A.
\]
If $A\geq\sup_\Omega v$, then $h_A$ is bounded and psh. Moreover, it is continuous on
$\Omega\setminus D$.
At each point of $D$, the logarithmic branch tends to $-\infty$,
whereas $v-A\geq -A$. Hence $h_A=v-A$ in a
neighborhood of that point. This proves continuity throughout
$\Omega$ and establishes \eqref{eq:floor-choice}.

We next verify the hypotheses of the truncation theorem \cite{AWNW}*{Theorem~1.5}.
The function $q$ is locally integrable and  pluriharmonic on
$\Omega\setminus D$ by definition of $F$.

For $1\leq r\leq n$, the non-pluripolar product is defined by
\[
 \langle\ddc q\rangle^r
 =\lim_{k\to\infty}
 \mathbf1_{\{q>-k\}}
 \bigl(\ddc\max\{q,-k\}\bigr)^r.
\]
The set
\[
 \{q>-k\}=\{|F|^2>e^{-k/\varepsilon}\}
\]
is open and disjoint from $D$. On this set,
$\max\{q,-k\}=q$ and $q$ is smooth and pluriharmonic.
Bedford--Taylor locality therefore gives
\[
 \mathbf1_{\{q>-k\}}
 \bigl(\ddc\max\{q,-k\}\bigr)^r=0.
\]
Consequently,
\[
 \langle\ddc q\rangle^r=0
 \qquad(1\leq r\leq n).
\]
Together with $q\in L^1_{\loc}(\Omega)$, this
verifies $q\in\mathcal G(\Omega)$ as defined in
\cite{AWNW}*{Definition~1.1}.

Using the notation of \cite{AWNW}*{Definition~1.2}, set
\[
 [\ddc q]^r
 :=\ddc\bigl(q\langle\ddc q\rangle^{r-1}\bigr),
 \qquad
 S_r(q):=[\ddc q]^r-\langle\ddc q\rangle^r,
 \qquad
 \langle\ddc q\rangle^0=1.
\]
Poincar\'e--Lelong, with the normalization
\eqref{eq:PL-normalization}, gives
\[
 [\ddc q]^1=S_1(q)=\varepsilon[D].
\]
For $r\geq2$, the factor
$\langle\ddc q\rangle^{r-1}$ is zero, so
\[
 [\ddc q]^r=S_r(q)=0.
\]

Since $q\in\mathcal G(\Omega)$ and $v$ is smooth and psh,
\cite{AWNW}*{Theorem~1.5} applies directly to
$h_A=\max\{q,v-A\}$. For each $1\leq r\leq n$, it yields
\begin{equation}\label{eq:awnw}
 (\ddc h_A)^r
 \longrightarrow
 [\ddc q]^r
 +\sum_{j=1}^{r-1}
 S_j(q)\wedge(\ddc v)^{r-j}
 \qquad(A\to+\infty)
\end{equation}
as currents. For $r=1$, the sum is empty and the limit is
$\varepsilon[D]$. For $r\geq2$, only the term $j=1$ remains,
and hence
\begin{align*}
 (\ddc h_A)^r
 &\longrightarrow
 \varepsilon[D]\wedge(\ddc v)^{r-1}=\varepsilon^{\,1-(r-1)/(n-1)}
   [D]\wedge(\ddc V_0)^{r-1}=\varepsilon^{(n-r)/(n-1)}
   [D]\wedge(\ddc V_0)^{r-1}.
\end{align*}
This also agrees with the case $r=1$, proving
\eqref{eq:all-degree-limit}.
\end{proof}

Put $\beta=\ddc|z|^2$.  The trace measure of a positive $(r,r)$-current
$T$ will mean $T\wedge\beta^{n-r}$.

\begin{lemma}[A perturbation with small lower-degree currents]
\label{lem:small-perturbation}
Let $\Omega\subset\C^n$ ($n\geq2$) be a bounded domain.  Fix a slice
measure $\sigma$ from Proposition~\ref{prop:slices}, a compact set
$L\subset\subset\Omega$, test functions
$g_1,\ldots,g_s\in C_c^\infty(\Omega)$, and numbers
$\eta,\gamma,t>0$.  There is a continuous bounded psh function
$h\leq0$ such that
\begin{align}
 \|h\|_{L^p(\Omega)}&\leq C_{\Omega,p}t
       &&(1\leq p<\infty),\label{eq:perturbation-Lp}\\
 \left|\int g_j\,d(\MA(h)-\sigma)\right|&<\eta
       &&(1\leq j\leq s),\label{eq:perturbation-top}\\
 \int_L(\ddc h)^r\wedge\beta^{n-r}&<\gamma
       &&(1\leq r<n).\label{eq:perturbation-lower}
\end{align}
The same $h$ gives the estimate for every finite $p$.
\end{lemma}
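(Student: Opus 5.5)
The function $h$ will be the truncation $h=h_{\varepsilon,A}=\max\{\varepsilon\log|F|^2,\ \varepsilon^{-1/(n-1)}V_0-A\}$ from Lemma~\ref{thm:awnw}. Here $D,F,V_0$ are the data attached to the fixed slice measure $\sigma$ by Proposition~\ref{prop:slices}. The key observation is that $D$, $F$, $V_0$ and $N$ depend only on $\sigma$, not on $\varepsilon$ or $A$. We first choose $\varepsilon$ small to get the $L^p$ bound and the lower-degree bound, and then choose $A$ large, depending on $\varepsilon$, to get the top-degree approximation. Throughout we take $A\geq\sup_\Omega v_\varepsilon$, which is finite because $V_0$ is smooth on a neighborhood of $\overline\Omega$. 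Then Lemma~\ref{thm:awnw} gives that $h$ is continuous, bounded and psh, with $\varepsilon\log|F|^2\leq h\leq0$.

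\emph{$L^p$ bound.} From \eqref{eq:floor-choice} we have $|h|\leq\varepsilon|\log|F|^2|\leq\varepsilon\sum_{r=1}^N|\log|(z_1-a_r)/R_1|^2|$. Each $a_r$ lies in the $z_1$-projection of $\Omega$, since $\{z_1=a_r\}\cap\Omega\neq\varnothing$. Minkowski's inequality and Lemma~\ref{lem:uniform-log-Lp} then give
\[
 \|h\|_{L^p(\Omega)}\leq\varepsilon N C_{\Omega,p}
\]
for every finite $p$. Requiring $\varepsilon\leq t/N$ yields \eqref{eq:perturbation-Lp} simultaneously for all $p$, since $\varepsilon$ does not depend on $p$.

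\emph{Lower-degree currents.} Fix $\chi\in C_c^\infty(\Omega)$ with $0\leq\chi\leq1$ and $\chi=1$ on $L$. For $1\leq r<n$, set
\[
 M:=\max_{1\leq r<n}\int\chi\,[D]\wedge(\ddc V_0)^{r-1}\wedge\beta^{n-r}.
\]
This is finite because the current defined in \eqref{eq:divisor-restriction-product} has locally finite mass, and it is independent of $\varepsilon$. Since $\chi\beta^{n-r}$ is a smooth compactly supported test form, \eqref{eq:all-degree-limit} gives
\[
 \limsup_{A\to\infty}\int_L(\ddc h_{\varepsilon,A})^r\wedge\beta^{n-r}
 \leq\lim_{A\to\infty}\int\chi(\ddc h_{\varepsilon,A})^r\wedge\beta^{n-r}
 =\varepsilon^{(n-r)/(n-1)}\int\chi[D]\wedge(\ddc V_0)^{r-1}\wedge\beta^{n-r}.
\]
For $r<n$ the exponent $(n-r)/(n-1)$ is at least $1/(n-1)$, so with $\varepsilon\leq1$ this limit is at most $\varepsilon^{1/(n-1)}M$. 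We therefore also require $\varepsilon^{1/(n-1)}M<\gamma/2$.

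\emph{Top degree and choice of $A$.} For $r=n$ the exponent is $0$, so \eqref{eq:all-degree-limit} together with \eqref{eq:programmed-slice} gives $\MA(h_{\varepsilon,A})\to[D]\wedge(\ddc V_0)^{n-1}=\sigma$ as currents. The $g_j$ are smooth with compact support, so convergence as currents suffices to test against them. With $\varepsilon$ now fixed, we choose $A$ so large that all $s$ top-degree errors are below $\eta$ and all $n-1$ lower-degree integrals are below $\gamma$. This is possible because only finitely many conditions are involved.

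The main point needing care is the lower-degree estimate, where current convergence must be converted into a mass bound on $L$. The cutoff $\chi$ handles this, because $\chi\geq\mathbf 1_L$ and the currents are positive. One must also check that $M$ is independent of $\varepsilon$, so that the order of choices ($\varepsilon$ first, then $A$) is legitimate.
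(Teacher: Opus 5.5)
Your proposal is correct and follows essentially the same argument as the paper. You fix the slice data $D,F,V_0,N$, choose $\varepsilon$ with $\varepsilon N\le t$ and the $\varepsilon$-weighted cutoff masses below $\gamma/2$, then take $A\ge\sup_\Omega v_\varepsilon$ large using the truncation limits of Lemma~\ref{thm:awnw} tested against $g_j$ and $\chi\beta^{n-r}$; positivity converts the $\chi$-integral into the bound on $L$, and Lemma~\ref{lem:uniform-log-Lp} with Minkowski gives the $L^p$ estimate. Using a single constant $M$ together with $\varepsilon\le1$, instead of the paper's separate conditions on each $M_r$, is only a cosmetic difference.
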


\begin{proof}
Fix the data $D,F,V_0$ and their number of slices $N$.  Choose
$\chi\in C_c^\infty(\Omega)$, $\chi\geq0$, equal to one near $L$.
For $1\leq r<n$, the number
\[
 M_r=\int_\Omega\chi[D]\wedge(\ddc V_0)^{r-1}\wedge\beta^{n-r}
\]
is finite.  Choose $\varepsilon>0$ so that
\begin{equation}\label{eq:epsilon-choice}
 \varepsilon N\leq t,\qquad
 \varepsilon^{(n-r)/(n-1)}M_r<\gamma/2
 \quad(1\leq r<n).
\end{equation}
At this fixed $\varepsilon$, \eqref{eq:all-degree-limit} in top degree
has limit $\sigma$ by Proposition~\ref{prop:slices}.  Lemma~\ref{thm:awnw} allows us to choose
a single $A\geq\sup_\Omega v_\varepsilon$ such that
\[
 \left|
 \int_\Omega g_j\,(\ddc h_{\varepsilon,A})^n
 -\int_\Omega g_j\,d\sigma
 \right|<\eta
 \qquad\text{for every }j,
\]
and, simultaneously, for every $1\leq r<n$,
\[
 \left|
 \int_\Omega\chi\,(\ddc h_{\varepsilon,A})^r\wedge\beta^{n-r}
 -\varepsilon^{(n-r)/(n-1)}
  \int_\Omega\chi\,[D]\wedge(\ddc V_0)^{r-1}
       \wedge\beta^{n-r}
 \right|<\frac{\gamma}{2}.
\]
The second integral, including its $\varepsilon$ factor, is
less than $\gamma/2$ by our choice of $\varepsilon$.
Since $\chi\geq0$ and $\chi=1$ near $L$, positivity gives
\[
 \int_L(\ddc h_{\varepsilon,A})^r\wedge\beta^{n-r}
 \leq
 \int_\Omega\chi\,(\ddc h_{\varepsilon,A})^r\wedge\beta^{n-r}
 <\gamma.
\]
Thus $h=h_{\varepsilon,A}$ satisfies
\eqref{eq:perturbation-top}--\eqref{eq:perturbation-lower}.
Finally, \eqref{eq:floor-choice}, Minkowski's inequality, and
Lemma~\ref{lem:uniform-log-Lp} give, for every finite $p$,
\begin{equation}\label{eq:global-log-control}
 \|h\|_{L^p(\Omega)}
 \leq\varepsilon\sum_{r=1}^N
 \left\|\log\left|\frac{z_1-a_r}{R_1}\right|^2\right\|_{L^p(\Omega)}
 \leq C_{\Omega,p}\varepsilon N\leq C_{\Omega,p}t.
\end{equation}
The slice data are fixed first, then $\varepsilon$, and finally $A$.
\end{proof}

\section{Addition of measures}\label{sec:addition-construction}

The lower-degree trace estimates control the mixed terms produced by
adding the perturbation to a bounded psh function.  In the estimate below,
the constant depends on the bound of the given function and is
independent of the size of the perturbation.

\begin{lemma}[Mixed Chern--Levine--Nirenberg estimate]\label{lem:CLN-mixed}
Let $\Omega\subset\C^n$ ($n\geq2$) be a domain, let
$K\subset L^\circ$ with $K,L\subset\subset\Omega$, and let $1\leq r<n$.
If $b,h$ are locally bounded psh functions, then
\begin{equation}\label{eq:CLN-mixed}
 \int_K(\ddc b)^{n-r}\wedge(\ddc h)^r
 \leq C_{K,L,n}\|b\|_{L^\infty(L)}^{n-r}
       \int_L(\ddc h)^r\wedge\beta^{n-r},
\end{equation}
 where $C_{K,L,n}>0$ depends only on $K$, $L$, and $n$.
\end{lemma}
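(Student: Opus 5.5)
The plan is the standard Chern--Levine--Nirenberg induction: replace one factor $\ddc b$ at a time by $\beta$, paying one factor $\|b\|_{L^\infty}$ at each step. First I normalize. Write $M=\|b\|_{L^\infty(L)}$. If $M=0$ then $b\equiv0$ on $L$, so $\ddc b=0$ on $L^\circ$ and the left side vanishes. Otherwise the inequality is homogeneous of degree $n-r$ in $b$, so I may assume $M=1$. Replacing $b$ by $b+2$ does not change $\ddc b$, so I may assume $1\leq b\leq3$ on $L$; this lets me use the norm on $L$ alone.

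Next I choose a chain of compact sets $K=K_0\subset K_1^\circ$, $K_1\subset K_2^\circ$, \ldots, $K_{n-r}\subset L^\circ$, all inside $L^\circ$, together with cutoffs $\chi_k\in C_c^\infty(K_{k}^\circ)$, $0\leq\chi_k\leq1$, and $\chi_k=1$ near $K_{k-1}$. Everything depends only on $K,L,n$. Set $T=(\ddc h)^r$, a positive closed current, and
\[
 I_k:=\int_{K_k}(\ddc b)^{n-r-k}\wedge\beta^{k}\wedge T .
\]
The key step is $I_{k}\leq C_k I_{k+1}$ for $0\leq k<n-r$. To obtain it, put $S=(\ddc b)^{n-r-k-1}\wedge\beta^k\wedge T$, a positive closed current. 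Then
\[
 I_k\leq\int\chi_{k+1}\,\ddc b\wedge S=\int b\,\ddc\chi_{k+1}\wedge S,
\]
by Stokes and closedness of $S$; this integration by parts is justified for Bedford--Taylor products via the standard smooth decreasing approximation of $b$. Since $|b|\leq3$ on $L$ and $\ddc\chi_{k+1}\leq C\beta$ as forms, positivity of $S$ gives
\[
 I_k\leq 3C\int_{K_{k+1}}\beta\wedge S=3C\,I_{k+1}.
\]
After $n-r$ steps, $I_0\leq C_{K,L,n}\int_{K_{n-r}}\beta^{n-r}\wedge T\leq C_{K,L,n}\int_L(\ddc h)^r\wedge\beta^{n-r}$. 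Undoing the normalization then yields the factor $M^{n-r}$.

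The main obstacle is the rigorous justification of the identity $\int\chi\,\ddc b\wedge S=\int b\,\ddc\chi\wedge S$ when $b$ is merely bounded psh and $S$ involves products of $b$ and $h$. I would handle it by definition: in Bedford--Taylor theory $\ddc b\wedge S$ is defined as $\ddc(bS)$, so the identity is exactly the definition of the current $\ddc(bS)$ tested against $\chi$. Positivity of $S$ and of $\ddc b\wedge S$ comes from the Bedford--Taylor theory cited before.
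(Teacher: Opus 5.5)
Your proposal is correct and takes essentially the paper's approach: the paper simply invokes the Chern--Levine--Nirenberg inequality for the positive closed current $T=(\ddc h)^r$ from Demailly's book (Chapter~III, (3.3)), and your nested-cutoff induction, which trades one factor $\ddc b$ for $\beta$ at each step via $\int\chi\,\ddc(bS)=\int b\,\ddc\chi\wedge S$, is exactly the standard proof of that inequality. The one point worth stating explicitly is that the normalized bounds $1\leq b\leq 3$ hold pointwise on $L^\circ$ (a psh function is the limit of its ball averages), not merely almost everywhere; you need this because the measures $\beta\wedge S$ may charge Lebesgue-null sets, and it suffices because all your cutoffs are supported in $L^\circ$.
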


\begin{proof}
This is the Chern--Levine--Nirenberg estimate for the positive closed
current $T=(\ddc h)^r$; see
\cite{DemaillyCADG}*{Chapter~III, (3.3)}.
\end{proof}

\begin{proposition}[Addition for finitely many test functions]\label{prop:programming}
Let $\Omega\subset\C^n$ ($n\geq2$) be a bounded domain, let
$b\in\PSH(\Omega)\cap L^\infty(\Omega)$, and let $\sigma$ be a slice
measure from Proposition~\ref{prop:slices}.  Given
$g_1,\ldots,g_s\in C_c^\infty(\Omega)$ and $\eta,t>0$, there is a
continuous bounded psh $h\leq0$ such that $u=b+h$ satisfies
\begin{align}
 u&\leq b,\qquad
 \|u-b\|_{L^p(\Omega)}\leq C_{\Omega,p}t
       \quad(1\leq p<\infty),\label{eq:program-Lp}\\
 \left|\int g_j\,d\bigl(\MA(u)-\MA(b)-\sigma\bigr)\right|
 &<\eta\quad(1\leq j\leq s).\label{eq:program-measure}
\end{align}
If $b$ is continuous, so is $u$.
\end{proposition}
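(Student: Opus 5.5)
We take $u=b+h$, where $h$ is produced by Lemma~\ref{lem:small-perturbation} with suitably chosen data, and expand $\MA(u)$ multilinearly. Since $h\leq0$, we get $u\leq b$ immediately. Also $u-b=h$, so \eqref{eq:perturbation-Lp} gives \eqref{eq:program-Lp} for every finite $p$. If $b$ is continuous, then $u$ is continuous because $h$ is. The substance is therefore the measure estimate \eqref{eq:program-measure}.

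For the expansion, $b$ and $h$ are bounded psh, and the Bedford--Taylor product is symmetric and multilinear. Hence
\[
 \MA(b+h)=\MA(b)+\MA(h)+\sum_{r=1}^{n-1}\binom nr(\ddc b)^{n-r}\wedge(\ddc h)^r .
\]
This gives
\[
 \int g_j\,d\bigl(\MA(u)-\MA(b)-\sigma\bigr)
 =\int g_j\,d(\MA(h)-\sigma)
 +\sum_{r=1}^{n-1}\binom nr\int g_j\,(\ddc b)^{n-r}\wedge(\ddc h)^r .
\]
The first term is controlled by \eqref{eq:perturbation-top}, and the mixed terms are to be made small using Lemma~\ref{lem:CLN-mixed}.

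To set the constants, let $K:=\bigcup_j\supp g_j$ and choose a compact $L\subset\subset\Omega$ with $K\subset L^\circ$. Put $G:=\max_j\|g_j\|_\infty$ and $B:=\max\{1,\|b\|_{L^\infty(\Omega)}\}$. The mixed measures are positive, so each mixed integral is bounded in absolute value by
\[
 G\,C_{K,L,n}B^{n-r}\int_L(\ddc h)^r\wedge\beta^{n-r}.
\]
Now apply Lemma~\ref{lem:small-perturbation} with this $L$, the given $g_j$ and $t$, the tolerance $\eta/2$ in place of $\eta$, and
\[
 \gamma:=\frac{\eta}{2\bigl(1+G\,C_{K,L,n}B^{n-1}2^n\bigr)} .
\]
Then the first term is below $\eta/2$. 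Using $\sum_r\binom nr\leq 2^n$ and $B^{n-r}\leq B^{n-1}$, the sum of the mixed terms is below $\eta/2$. This proves \eqref{eq:program-measure}.

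The main point to check is the order of choices: $\gamma$ must depend only on $b$, $K$, $L$ and the $g_j$, never on $h$. This holds because the constant in Lemma~\ref{lem:CLN-mixed} depends only on $K$, $L$, $n$ and $\|b\|_{L^\infty(L)}$. This is why the construction needs lower-degree trace control rather than continuity of $b$.
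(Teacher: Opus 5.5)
Your proposal is correct and follows the paper's proof essentially step for step. You take $h$ from Lemma~\ref{lem:small-perturbation} with top-degree tolerance $\eta/2$ and a lower-degree tolerance $\gamma$ fixed in advance from $b$, $K$, $L$ and the $g_j$, expand $\MA(b+h)$ by Bedford--Taylor multilinearity, and bound the mixed terms with Lemma~\ref{lem:CLN-mixed}. The only differences are cosmetic: you use $\max\{1,\|b\|_{L^\infty(\Omega)}\}$ and the crude bound $\sum_r\binom nr\leq 2^n$, whereas the paper uses $\|b\|_{L^\infty(L)}$ and the exact binomial sum.
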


\begin{proof}
Choose compact sets $K,L\Subset\Omega$ such that
\[
 \bigcup_j\operatorname{supp}g_j\subset K\subset L^\circ.
\]
Set
\[
 B=\|b\|_{L^\infty(L)},
 \qquad G=\max_j\|g_j\|_\infty,
\]
and choose $\gamma>0$ so that
\[
 G C_{K,L,n}\gamma
 \sum_{r=1}^{n-1}\binom nr B^{n-r}<\frac{\eta}{2}.
\]
Apply Lemma~\ref{lem:small-perturbation} with the compact set $L$,
lower-degree tolerance $\gamma$, top-degree tolerance $\eta/2$,
and the prescribed $t$. It gives a continuous bounded psh
function $h\leq0$ such that
\[
 \int_L(\ddc h)^r\wedge\beta^{n-r}<\gamma
 \qquad(1\leq r<n),\qquad
 \left|\int_\Omega g_j\,\MA(h)-\int_\Omega g_j\,d\sigma\right|
 <\frac{\eta}{2}
 \qquad\text{for every }j,
\]
together with the stated $L^p$ bounds.

Set $u=b+h$. Since both $b$ and $h$ are bounded and psh,
Bedford--Taylor multilinearity gives
\begin{equation}\label{eq:BT-addition-expansion}
 \MA(u)=\MA(b)+\MA(h)
 +\sum_{r=1}^{n-1}\binom nr
       (\ddc b)^{n-r}\wedge(\ddc h)^r.
\end{equation}
Each mixed measure is nonnegative. Since
$\operatorname{supp}g_j\subset K$, the estimate
\eqref{eq:CLN-mixed} yields
\begin{align*}
 &\left|
 \sum_{r=1}^{n-1}\binom nr
 \int_\Omega g_j\,
       (\ddc b)^{n-r}\wedge(\ddc h)^r
 \right|\leq
 G\sum_{r=1}^{n-1}\binom nr
 \int_K(\ddc b)^{n-r}\wedge(\ddc h)^r\\
 &\qquad\leq
 G C_{K,L,n}
 \sum_{r=1}^{n-1}\binom nr B^{n-r}
 \int_L(\ddc h)^r\wedge\beta^{n-r}\leq
 G C_{K,L,n}\gamma
 \sum_{r=1}^{n-1}\binom nr B^{n-r}
 <\frac{\eta}{2}.
\end{align*}
Combining this with the top-degree estimate and
\eqref{eq:BT-addition-expansion}, we obtain
\[
 \left|
 \int_\Omega g_j\,\MA(u)
 -\int_\Omega g_j\,\MA(b)
 -\int_\Omega g_j\,d\sigma
 \right|<\eta
 \qquad\text{for every }j.
\]

Finally, $u\leq b$ because $h\leq0$, and
\[
 \|u-b\|_{L^p(\Omega)}
 =\|h\|_{L^p(\Omega)}
 \leq C_{\Omega,p}t
 \qquad(1\leq p<\infty).
\]
If $b$ is continuous, then $u$ is continuous as well.
\end{proof}

\subsection{Addition for sequences of psh functions}
\label{sec:notation}

To impose these estimates along one sequence, we fix a countable family
of test functions that determines the vague topology.  The associated
metric also compares two moving measure sequences without requiring
common local mass bounds.

\begin{lemma}[Metrization of the vague topology]
\label{lem:vague-metrization}
On every domain $\Omega\subset\C^n$, there are real-valued functions
$(f_j)_{j\geq1}\subset C_c^\infty(\Omega)$ and compact sets $K_j$ such that
\begin{equation}\label{eq:exhaustion}
 K_1\subset\subset K_2^\circ\subset\subset\cdots,\qquad
 \bigcup_{j\geq1}K_j=\Omega,\qquad
 \bigcup_{\ell\leq j}\supp f_\ell\subset K_j^\circ.
\end{equation}
The family $(f_j)$ determines vague convergence on $\cM_+(\Omega)$, and
\begin{equation}\label{eq:vague-metric}
 d_{\mathrm v}(\mu,\nu)
 :=\sum_{j=1}^\infty2^{-j}
 \frac{\left|\int_\Omega f_j\,d(\mu-\nu)\right|}
 {1+\left|\int_\Omega f_j\,d(\mu-\nu)\right|}
\end{equation}
is a metric inducing the vague topology.  Moreover, this topology is separable.
\end{lemma}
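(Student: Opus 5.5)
We will build the family from a smooth compact exhaustion and a countable dense subset of test functions. First fix an exhaustion of $\Omega$ by compact sets $K_j$ with $K_j\subset\subset K_{j+1}^\circ$ and $\bigcup_j K_j=\Omega$; for instance, take closed sublevel sets of the distance to the boundary intersected with closed balls, adjusted so that each lies in the interior of the next. For each $k$, the space $C_c^\infty(K_k^\circ)$ is dense in $C_0(K_k^\circ)$ in the sup norm, and $C_0(K_k^\circ)$ is separable. Hence there is a countable family $\mathcal D_k\subset C_c^\infty(K_k^\circ)$, real-valued, that is sup-norm dense in $\{g\in C_c(\Omega):\supp g\subset K_{k-1}\}$; it suffices to take a countable dense subset of real polynomials with rational coefficients multiplied by a fixed cutoff equal to one on $K_{k-1}$ and supported in $K_k^\circ$. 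Enumerate $\bigcup_k\mathcal D_k$ as $(f_j)$. Then re-index the exhaustion, by repeating or passing to a subsequence of the $K$'s (inserting the $f$'s slowly), so that $\bigcup_{\ell\le j}\supp f_\ell\subset K_j^\circ$. This works because each $f_\ell$ has compact support and the interiors $K_k^\circ$ increase to $\Omega$, so one may define the new $K_j$ to be the first old set whose interior contains the first $j$ supports, possibly interleaved to keep strict nesting.

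Next, show that the family $(f_j)$ determines vague convergence. If $\int f_j\,d\mu_i\to\int f_j\,d\mu$ for all $j$ and $g\in C_c(\Omega)$ has support in $K_{k-1}$, fix a cutoff $\theta\in C_c(\Omega)$ with $0\le\theta\le1$, $\theta=1$ on $K_k$, which we may take to be among the $f_j$ or dominated by one (add such cutoffs to the family). Then $\mu_i(K_k)\le\int\theta\,d\mu_i$ is bounded in $i$. Approximate $g$ by $f\in\mathcal D_k$ within $\epsilon$ uniformly; every member of $\mathcal D_k$ is supported in $K_k$, so
\[
\Bigl|\int g\,d(\mu_i-\mu)\Bigr|\le\epsilon\bigl(\mu_i(K_k)+\mu(K_k)\bigr)+\Bigl|\int f\,d(\mu_i-\mu)\Bigr|.
\]
This gives vague convergence. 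The key point, and the main place where care is needed, is the uniform mass bound, which is exactly why we include the cutoffs; positivity of the measures is essential here.

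Finally, $d_{\mathrm v}$ is symmetric and satisfies the triangle inequality because $x\mapsto x/(1+x)$ is increasing and subadditive. It separates points because the $f_j$ determine measures: if all the integrals agree, the density argument above shows that $\int g\,d\mu=\int g\,d\nu$ for every $g\in C_c$. Convergence in $d_{\mathrm v}$ is equivalent to convergence of each $\int f_j$, since the series has tail $\le 2^{-J}$. Combined with the previous step, metric convergence of sequences coincides with vague convergence. To conclude that the topologies coincide and not merely their convergent sequences, we note that the vague topology restricted to sets $\{\mu(K_k)\le M_k\}$ is first countable, or more directly we check that the neighborhoods match: a basic vague neighborhood $\{|\int g_i\,d(\nu-\mu)|<\epsilon\}$ contains a $d_{\mathrm v}$-ball by the same approximation with the cutoff bound, and each $d_{\mathrm v}$-ball is vaguely open since finitely many terms dominate. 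Separability follows from the density of rational combinations of Dirac masses at rational points, by the atomic approximation already used in Proposition~\ref{prop:slices}.
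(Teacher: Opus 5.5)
Your proposal is correct and follows essentially the paper's route. Both use countable sup-norm-dense smooth families attached to a compact exhaustion, with adjoined nonnegative cutoffs supplying the local mass bounds in $\bigl|\int g\,d(\mu_i-\mu)\bigr|\le\varepsilon\bigl(\mu_i(K_k)+\mu(K_k)\bigr)+\bigl|\int f\,d(\mu_i-\mu)\bigr|$. Both then use subadditivity of $t/(1+t)$ with the $2^{-J}$ tail bound, a neighborhood comparison (keeping the cutoff coordinate near its value at $\mu$) for equality of topologies, and rational atomic measures for separability.

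Your Stone--Weierstrass choice of rational polynomials times a fixed cutoff is a concrete instance of the paper's appeal to separability. It even puts that cutoff in the family via the constant polynomial. Just keep the construction exhaustion's labels separate from the re-indexed $K_j$.
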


\begin{proof}
Choose relatively compact open sets $U_m$ exhausting $\Omega$, with
$\overline U_m\subset U_{m+1}$, and put $L_m=\overline U_{m+1}$.
Let $\mathcal D_m\subset C_c^\infty(U_{m+1};\mathbb R)$ be a
countable uniformly dense subset of $C_0(U_{m+1};\mathbb R)$,
the space of continuous functions vanishing at the boundary and
at infinity. Such a family exists by separability and smooth
uniform approximation. Extend its members by zero to $\Omega$.
Choose $\chi_m\in C_c^\infty(\Omega)$, $\chi_m\geq0$, equal to
one near $L_m$. Enumerate all the $\mathcal D_m$ and $\chi_m$ as
$(f_j)$, and enlarge an arbitrary compact exhaustion to obtain
$(K_j)$ satisfying \eqref{eq:exhaustion}.

Convergence of the $\chi_m$ coordinates of positive measures
$\mu_k$ to those of $\mu$ gives the local mass bound
\[
 \sup_k\mu_k(L_m)\leq\sup_k\int\chi_m\,d\mu_k<\infty.
\]
For real $g\in C_c(\Omega)$, choose $m$ with $\supp g\subset U_m$
and approximate $g$ uniformly by $f\in\mathcal D_m$. Both functions
are supported in $L_m$, and
\[
 \left|\int g\,d(\mu_k-\mu)\right|
 \leq\left|\int f\,d(\mu_k-\mu)\right|
 +\|g-f\|_\infty\bigl(\mu_k(L_m)+\mu(L_m)\bigr).
\]
Thus coordinate convergence implies vague convergence; complex
functions are treated by their real and imaginary parts. The same
estimate, with the cutoff coordinate restricted to a bounded
neighborhood of its value at $\mu$, shows that each $C_c$ integral
is continuous in the coordinate topology. Hence that topology is
exactly the vague topology.

The function $t\mapsto t/(1+t)$ is increasing and subadditive on
$[0,\infty)$, so \eqref{eq:vague-metric} satisfies the triangle
inequality; the determining property separates measures. Its finite
partial sums and uniformly bounded tails show that it induces the
coordinate topology. Finally, finite atomic measures supported on
a fixed countable dense subset of $\Omega$, with nonnegative rational
weights, form a countable vague dense set: on the compact supports
of finitely many test functions, approximate a Radon measure by
finitely many atoms, then approximate their locations and weights.
\end{proof}

Fix one family, exhaustion, and metric supplied by
Lemma~\ref{lem:vague-metrization} for each domain under consideration.

For two sequences $(\alpha_k),(\beta_k)\subset\cM_+(\Omega)$,
$d_{\mathrm v}(\alpha_k,\beta_k)\to0$ is equivalent to
\[
 \int f_j\,d(\alpha_k-\beta_k)\longrightarrow0
 \quad\text{for every fixed }j.
\]
It follows directly from compatibility of the metric that the two positive
sequences have the same vague subsequential limits.  Also,
\begin{equation}\label{eq:translation-invariance}
 d_{\mathrm v}(\alpha+\gamma,\beta+\gamma)
 =d_{\mathrm v}(\alpha,\beta)
\end{equation}
for positive Radon measures $\alpha,\beta,\gamma$.

For these two sequences, convergence of their metric distance to zero
need not imply convergence of their signed differences against every
continuous compactly supported test function.
Example~\ref{ex:metric-dependence} records the distinction, and
Proposition~\ref{prop:mass-control} gives the stronger conclusion under
local mass bounds.

\begin{theorem}[Approximation of a sum of measures]\label{thm:tracking}
Let $\Omega\subset\C^n$ ($n\geq2$) be a bounded domain, and fix
$(f_j)$, $(K_j)$, and $d_{\mathrm v}$ as above.  Let
\[
 b_k\in\PSH(\Omega)\cap L^\infty(\Omega),\qquad
 \mu_k\in\cM_+(\Omega)
\]
be arbitrary sequences.  There are bounded psh functions $u_k$ such that
\begin{align}
 u_k&\leq b_k\quad\text{on }\Omega,\label{eq:background-order}\\
 \|u_k-b_k\|_{L^p(\Omega)}&\longrightarrow0
 \quad\text{for every }1\leq p<\infty,\label{eq:background-Lp}\\
 d_{\mathrm v}\bigl(\MA(u_k),\MA(b_k)+\mu_k\bigr)
 &\longrightarrow0.\label{eq:background-measure}
\end{align}
The same sequence works for every finite $p$.  If every $b_k$ is continuous,
the $u_k$ can be chosen smooth.
\end{theorem}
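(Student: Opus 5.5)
The plan is to apply Proposition~\ref{prop:programming} once for each $k$, with the first $k$ test functions and tolerances shrinking in $k$, using slice measures that approximate $\mu_k$. Fix $k$. By the density part of Proposition~\ref{prop:slices}, choose a slice measure $\sigma_k$ with
\[
 \Bigl|\int f_j\,d(\sigma_k-\mu_k)\Bigr|<\tfrac1{2k}\qquad(1\le j\le k).
\]
Next apply Proposition~\ref{prop:programming} with $b=b_k$, $\sigma=\sigma_k$, test functions $f_1,\dots,f_k$, $\eta=1/(2k)$ and $t=1/k$. This gives a continuous bounded psh $h_k\le0$. Then $u_k=b_k+h_k$ satisfies $u_k\le b_k$ and $\|u_k-b_k\|_{L^p}\le C_{\Omega,p}/k$ for every finite $p$, which is the same sequence for all $p$. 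It also satisfies
\[
 \Bigl|\int f_j\,d\bigl(\MA(u_k)-\MA(b_k)-\mu_k\bigr)\Bigr|<\tfrac1k\qquad(j\le k).
\]
For each fixed $j$, the signed difference tested against $f_j$ therefore tends to $0$. By the remark after Lemma~\ref{lem:vague-metrization} this gives \eqref{eq:background-measure}: each term of \eqref{eq:vague-metric} is bounded by $1$ and weighted by $2^{-j}$, so the tail beyond $j=J$ contributes at most $2^{-J}$. No uniform mass bound on $\mu_k$ or on $\MA(b_k)$ is needed, since only finitely many coordinates are controlled at each stage.

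The substantive part is the smooth case when every $b_k$ is continuous. Then $u_k$ is continuous and bounded, but not smooth. I would replace $u_k$ by a smooth bounded psh $\tilde u_k$ that stays below $b_k$ and is close to $u_k$. The tool is Lemma~\ref{lem:continuous-approximation}, which is mentioned in the introduction for $\mathscr R_{\mathrm{sm}}$ and which I take to provide, for a continuous bounded psh function on a bounded domain, smooth bounded psh approximants $w_\ell$ with $\MA(w_\ell)\to\MA(u_k)$ vaguely and $w_\ell\to u_k$ in $L^p$. To keep $\tilde u_k\le b_k$, I would first lower the target. Take $u_k-\tfrac1k$ and approximate it by smooth psh $w\le u_k$ with $|w-(u_k-\tfrac1k)|$ small. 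One way is Richberg-type regularization with error below $1/(2k)$, followed by subtracting a constant; constants change neither the Monge--Amp\`ere measure nor the $L^p$ estimate, since $\Omega$ is bounded. Then $w\le u_k\le b_k$. Choose $w$ so that $\bigl|\int f_j\,d(\MA(w)-\MA(u_k))\bigr|<1/k$ for $j\le k$, and so that $\|w-u_k\|_{L^p}\le C/k$ for all $p$ simultaneously. Uniform closeness gives the $L^p$ bound for all $p$, and $\tilde u_k=w$.

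The main obstacle is this last step. Global uniform smooth psh approximation from below of a continuous bounded psh function on an arbitrary bounded domain is not automatic without geometric assumptions. If Lemma~\ref{lem:continuous-approximation} only gives local or vague-limit information, I would instead smooth the ingredients. The perturbation $h_k=\max\{q_\varepsilon,v_\varepsilon-A\}$ is built from smooth pieces, so replacing $\max$ by a regularized maximum $\max_\theta$ yields a smooth psh function. Convergence $\max_\theta\to\max$ is uniform, and it is decreasing if one uses $\max_\theta\ge\max$ and subtracts $\theta$. By Bedford--Taylor continuity this changes all the currents $(\ddc h)^r\wedge\beta^{n-r}$ and $\MA(h)$ only slightly against the finitely many relevant test functions. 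That leaves the smoothing of $b_k$ itself, for which I would rely on the cited lemma, applied with finite test data and uniform error $1/k$.
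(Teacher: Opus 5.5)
Your non-smooth construction is the paper's: approximate $\mu_k$ by a slice measure $\sigma_k$ on $f_1,\dots,f_k$, apply Proposition~\ref{prop:programming} with tolerances of order $1/k$, and bound the tail of $d_{\mathrm v}$ by $2^{-k}$. That part is correct.

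The smooth case, however, is not closed as you wrote it. Your main tool, Lemma~\ref{lem:continuous-approximation}, is proved in the paper by applying the smooth part of Theorem~\ref{thm:tracking} itself, so invoking it here is circular. Your fallback does not avoid this. A regularized maximum makes $h_k$ smooth, but $b_k+h_k$ is still only as regular as $b_k$, and you again propose to smooth $b_k$ by the same lemma.

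What is missing is that the step you call the main obstacle is not one. The Richberg route you mention in passing settles it globally on any bounded domain, with no geometric assumption. Richberg's theorem \cite{DemaillyCADG}*{Chapter~I, Theorem~5.21} holds on an arbitrary open set: a continuous strictly psh $w$ admits, for every continuous $\theta>0$, a smooth strictly psh $v$ with $w\le v\le w+\theta$. Strictness comes from adding $\delta|z|^2$, which is bounded because $\Omega$ is bounded. Apply the theorem to $w_k+\delta|z|^2$, where $w_k=b_k+h_k$, with $\theta=\delta$. This gives $w_k+\delta|z|^2\le v_{k,\delta}\le w_k+\delta(|z|^2+1)$. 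Put $C_\Omega=1+\sup_\Omega|z|^2$ and $u_k=v_{k,\delta}-\delta C_\Omega$. Then $w_k-\delta C_\Omega\le u_k\le w_k\le b_k$. So $u_k$ is bounded, lies below $b_k$, and adds at most $\delta C_\Omega\lambda_{2n}(\Omega)^{1/p}$ to the $L^p$ error, for every $p$ simultaneously. Since $v_{k,\delta}\to w_k$ uniformly as $\delta\downarrow0$ and constants do not change $\MA$, Bedford--Taylor continuity lets you choose $\delta_k\le1/k$ keeping the first $k$ coordinates within $1/k$.

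This is exactly the paper's argument. Your variant with the fixed shift $1/k$ works equally well once the $\delta|z|^2$ term is included, and no appeal to Lemma~\ref{lem:continuous-approximation} is needed.
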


\begin{proof}
For each $k$, Proposition~\ref{prop:slices} gives a slice measure
$\sigma_k$ satisfying
\begin{equation}\label{eq:slice-approximation}
 \left|\int_\Omega f_j\,d(\sigma_k-\mu_k)\right|<1/k
 \qquad(1\leq j\leq k).
\end{equation}
Only finitely many compactly supported test functions are involved,
so no finite total mass assumption is needed. Apply
Proposition~\ref{prop:programming} to $b_k$, $\sigma_k$, these test
functions, and $\eta=t=1/k$. It gives a continuous bounded psh
function $h_k\leq0$ such that $w_k=b_k+h_k$ satisfies
\[
 w_k\leq b_k,\qquad
 \|w_k-b_k\|_{L^p(\Omega)}\leq C_{\Omega,p}/k
 \quad(1\leq p<\infty),
\]
and, after combining the addition error with
\eqref{eq:slice-approximation},
\begin{equation}\label{eq:finite-coordinate-control}
 \left|\int_\Omega f_j\,d\bigl(\MA(w_k)-\MA(b_k)-\mu_k\bigr)\right|
 <2/k\qquad(1\leq j\leq k).
\end{equation}
Each $w_k$ is globally bounded. The parameters may depend on
$\|b_k\|_\infty$; no common bound is used.
If smoothness is not required, take $u_k=w_k$.

If every $b_k$ is continuous, put
$C_\Omega=1+\sup_\Omega|z|^2$. Richberg approximation
\cite{DemaillyCADG}*{Chapter~I, Theorem~5.21} gives smooth strictly
psh functions $v_{k,\delta}$ with
\[
 w_k+\delta|z|^2\leq v_{k,\delta}
 \leq w_k+\delta(|z|^2+1).
\]
For fixed $k$, these functions converge uniformly to $w_k$ as
$\delta\downarrow0$. Bedford--Taylor continuity therefore allows
us to choose $0<\delta_k\leq1/k$ such that
\[
 \left|\int_\Omega f_j\,d\bigl(\MA(v_{k,\delta_k})-\MA(w_k)\bigr)\right|
 <1/k\qquad(1\leq j\leq k).
\]
Set $u_k=v_{k,\delta_k}-\delta_kC_\Omega$. The constant shift
preserves its Monge--Amp\`ere measure, and
\[
 w_k-\delta_kC_\Omega\leq u_k\leq w_k\leq b_k.
\]
Thus $u_k$ is globally bounded and, for each $1\leq p<\infty$,
\[
 \|u_k-b_k\|_{L^p(\Omega)}
 \leq C_{\Omega,p}/k+
 \delta_kC_\Omega\lambda_{2n}(\Omega)^{1/p}\longrightarrow0.
\]
The choices are independent of $p$, so one sequence gives all these
limits. In either case the first $k$ measure coordinates have error
at most $3/k$, and hence
\[
 d_{\mathrm v}\bigl(\MA(u_k),\MA(b_k)+\mu_k\bigr)
 \leq\frac3k\sum_{j=1}^k2^{-j}+\sum_{j>k}2^{-j}
 \leq\frac3k+2^{-k}\longrightarrow0.
\]
\end{proof}

\begin{remark}\label{rem:tracking-meaning}
In particular, if $\MA(b_k)+\mu_k$ converges vaguely to a measure $\nu$,
then $\MA(u_k)\to\nu$ vaguely.  For arbitrary $\mu_k$, the two sequences
in \eqref{eq:background-measure} have the same full vague cluster set.
\end{remark}

\begin{proof}[Proof of Theorem~\ref{thm:fixed-addition}]
Apply Theorem~\ref{thm:tracking} with $b_k=\varphi$ and $\mu_k=\mu$.
The measure $\MA(\varphi)+\mu$ is independent of $k$, so metric convergence gives
$\MA(u_k)\to\MA(\varphi)+\mu$ vaguely.  The order, all finite $L^p$
limits, and the smooth case also follow from that theorem.
\end{proof}

\begin{corollary}[Smooth approximation of zero]
\label{thm:zero-smooth}
Let $\Omega\subset\C^n$ ($n\geq2$) be a bounded domain.  For every sequence
$(\mu_k)\subset\cM_+(\Omega)$ there are smooth bounded psh functions
$u_k\leq0$ such that
\[
 d_{\mathrm v}(\MA(u_k),\mu_k)\to0,\qquad
 \|u_k\|_{L^p(\Omega)}\to0\quad(1\leq p<\infty).
\]
\end{corollary}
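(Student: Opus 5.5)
This is the special case $b_k\equiv0$ of Theorem~\ref{thm:tracking}, so I would apply that theorem with $b_k=0$ for every $k$ and the given measures $\mu_k$. The function $0$ belongs to $\PSH(\Omega)\cap L^\infty(\Omega)$ and is continuous, so the smooth clause of Theorem~\ref{thm:tracking} applies. It supplies smooth bounded psh functions $u_k$ with the three properties below, which I would read off in turn.

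First, \eqref{eq:background-order} gives $u_k\leq b_k=0$ on $\Omega$. Second, \eqref{eq:background-Lp} gives $\|u_k\|_{L^p(\Omega)}=\|u_k-0\|_{L^p(\Omega)}\to0$ for every $1\leq p<\infty$, and Theorem~\ref{thm:tracking} says the same sequence serves every finite $p$. Third, $\MA(0)=0$, so $\MA(b_k)+\mu_k=\mu_k$, and \eqref{eq:background-measure} becomes $d_{\mathrm v}(\MA(u_k),\mu_k)\to0$.

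The only point to check is that no hidden hypothesis of Theorem~\ref{thm:tracking} fails for the zero background. The proof of that theorem uses the bound $\|b_k\|_\infty$ only through the constant $B$ in Proposition~\ref{prop:programming}, and here $B=0$. With $B=0$ the mixed terms in \eqref{eq:BT-addition-expansion} vanish identically, so the argument is only simpler. The Richberg step needs $w_k=h_k$ to be continuous, and Lemma~\ref{lem:small-perturbation} guarantees this. I therefore expect no genuine obstacle: the corollary is a direct specialization of Theorem~\ref{thm:tracking}.
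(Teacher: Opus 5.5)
Your proposal is correct and is exactly the paper's proof: take $b_k=0$ in Theorem~\ref{thm:tracking} and use its smooth clause. Then $\MA(0)=0$, so the three conclusions of that theorem reduce to $u_k\leq0$, $\|u_k\|_{L^p(\Omega)}\to0$ for every finite $p$, and $d_{\mathrm v}(\MA(u_k),\mu_k)\to0$.
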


\begin{proof}
Take $b_k=0$ in Theorem~\ref{thm:tracking}.
\end{proof}
\section{Reachable limits and full cluster sets}\label{sec:tracking-results}

To pass from individual limits to full cluster sets, we prove that the
reachable set is closed and use diagonal selection to realize its
nonempty closed subsets.  A sequence with a growing point mass realizes
the empty cluster set.

For a sequence in a metric space $X$, write $\Clust_X$ for its set of
subsequential limits.  We continue to use $\Clust_{\mathrm v}$ for the
space $\cM_+(\Omega)$.

\begin{lemma}[Sequential cluster sets]\label{lem:cluster-topology}
Let $X$ be a separable metric space.  Every nonempty closed subset
$F\subset X$ is the cluster set of a sequence in $F$.
\end{lemma}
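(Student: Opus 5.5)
Since $X$ is separable and $F\subset X$ carries the induced metric, $F$ is itself separable. We fix a countable dense subset $\{x_1,x_2,\dots\}$ of $F$; it exists because $F\neq\varnothing$, and repetitions are allowed if $F$ is finite. The plan is to list these points so that each one recurs infinitely often. Concretely, we set $y_k=x_{i(k)}$, where $i(k)$ runs through the sequence $1;1,2;1,2,3;\dots$. Every $x_i$ then appears for infinitely many indices $k$, and every $y_k$ lies in $F$.

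We next check the two inclusions. First, each $x_i$ is a subsequential limit of $(y_k)$, via the constant subsequence taken along the infinitely many indices with $y_k=x_i$. Second, let $x\in F$ be arbitrary. By density we choose $x_{i_\ell}$ with $d(x_{i_\ell},x)<1/\ell$. Because each $x_{i_\ell}$ occurs at infinitely many indices, we can pick strictly increasing indices $k_1<k_2<\cdots$ with $y_{k_\ell}=x_{i_\ell}$. Then $y_{k_\ell}\to x$, which gives $F\subset\Clust_X(y_k)$. For the reverse inclusion, every $y_k$ lies in $F$ and $F$ is closed, so any subsequential limit lies in $F$. Hence $\Clust_X(y_k)=F$.

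No step here is a genuine obstacle. The only point needing care is the diagonal choice of strictly increasing indices, and the infinite repetition of each $x_i$ makes that choice possible. The hypothesis $F\neq\varnothing$ is needed only so that the dense set, and hence the sequence, exists.
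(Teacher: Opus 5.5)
Your proposal is correct and follows the paper's proof: both take a dense sequence in $F$, list it through the concatenated blocks $(x_1),(x_1,x_2),(x_1,x_2,x_3),\ldots$ so that each point recurs arbitrarily late, and use closedness of $F$ for the reverse inclusion. Your explicit choice of strictly increasing indices $k_1<k_2<\cdots$ with $y_{k_\ell}=x_{i_\ell}$ simply writes out the step the paper compresses into one line.
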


\begin{proof}
Choose a countable dense sequence $x_1,x_2,\ldots$ in $F$, allowing
repetitions, and concatenate the finite lists
$(x_1),(x_1,x_2),(x_1,x_2,x_3),\ldots$.
Every point of $F$ is approached along a subsequence, since each
$x_j$ occurs arbitrarily late, and no point outside $F$ is a
subsequential limit because all terms lie in the closed set $F$.
\end{proof}

\begin{proof}[Proof of Theorem~\ref{thm:structure}]
First let $\nu\in\mathscr R_b(\Omega,\varphi)$, and choose bounded
psh functions $b_k\to\varphi$ in $L^1_{\loc}(\Omega)$ with
$\MA(b_k)\to\nu$ vaguely. For $\mu\in\cM_+(\Omega)$,
Theorem~\ref{thm:tracking} gives bounded psh functions $u_k$ with
\[
 \|u_k-b_k\|_{L^1(\Omega)}\to0,\qquad
 d_{\mathrm v}\bigl(\MA(u_k),\MA(b_k)+\mu\bigr)\to0.
\]
Thus $u_k\to\varphi$ locally in $L^1$ and
$\MA(u_k)\to\nu+\mu$ vaguely, proving addition in the bounded
class. If $\nu\in\mathscr R_{\mathrm{sm}}(\Omega,\varphi)$,
choose $b_k$ smooth and use the smooth part of the same theorem.
This proves addition in the smooth class as well.

Fix $a\in\{b,\mathrm{sm}\}$. Since $0\in\cM_+(\Omega)$, addition
gives $\mathscr R_a+\cM_+=\mathscr R_a$.
If $0\in\mathscr R_a$, every $\mu\in\cM_+$ belongs to
$\mathscr R_a$ because $\mu=0+\mu$; conversely,
$\mathscr R_a=\cM_+$ includes zero.

We use one diagonal selection for both closure and realization.
For any sequence $\nu_k\in\mathscr R_a(\Omega,\varphi)$, select
from a sequence realizing $\nu_k$ one function $u_k$ in the same
class such that
\[
 \|u_k-\varphi\|_{L^1(K_k)}<1/k,\qquad
 d_{\mathrm v}(\MA(u_k),\nu_k)<1/k.
\]
Then $u_k\to\varphi$ in $L^1_{\loc}(\Omega)$, and the two measure
sequences have the same limits along corresponding subsequences.
In particular, if $\nu_k\to\nu$ vaguely, this selection realizes
$\nu$, proving that $\mathscr R_a$ is closed in the metrizable
vague topology.

Every subsequential measure limit of an approximating sequence
belongs to $\mathscr R_a$, because the corresponding potentials
still converge to $\varphi$ in the same class. Its full cluster
set is closed: in a metric space it is the intersection of the
closures of all tails of the sequence. Conversely, for a nonempty
vaguely closed set $F\subset\mathscr R_a$,
Lemma~\ref{lem:cluster-topology} supplies a sequence $\nu_k\in F$
with full cluster set $F$. The preceding diagonal selection
realizes exactly the same full cluster set.

For the empty set, let $b_k\to\varphi$ in $L^1_{\loc}(\Omega)$
be any bounded psh approximating sequence in the class under
consideration. Fix $x_0\in\Omega$ and put
$\beta_k=\MA(b_k)+k\delta_{x_0}$. Choose $g\in C_c(\Omega)$,
$g\geq0$, with $g(x_0)=1$. Positivity gives
\[
 \int_\Omega g\,d\beta_k\geq k,
\]
so $(\beta_k)$ has no vaguely convergent subsequence. Applying
Theorem~\ref{thm:tracking} with $\mu_k=k\delta_{x_0}$ produces
$u_k\to\varphi$ locally in $L^1$, with
$d_{\mathrm v}(\MA(u_k),\beta_k)\to0$. The measure sequences
therefore have the same empty cluster set.
In the bounded class take $b_k=\varphi$; this also gives convergence
to $\varphi$ in every finite global $L^p$ norm. In the smooth class,
use any smooth bounded approximating sequence, whose existence
follows in particular from $\mathscr R_{\mathrm{sm}}\ne\varnothing$.
No convergence of $\MA(b_k)$ is needed in this construction.
\end{proof}

\begin{remark}[The empty set in the smooth class]\label{rem:smooth-empty}
A smooth bounded psh approximating sequence with empty measure
cluster set exists if and only if there is any smooth bounded psh
sequence $b_k\to\varphi$ in $L^1_{\loc}(\Omega)$. Necessity is
immediate. For sufficiency, the last construction in the proof
of Theorem~\ref{thm:structure} applies to this sequence without
assuming convergence of $\MA(b_k)$.
Thus $\mathscr R_{\mathrm{sm}}(\Omega,\varphi)=\varnothing$ excludes
convergent measure subsequences, but does not by itself exclude
smooth bounded psh approximating sequences with empty measure
cluster set.
\end{remark}

The geometric part of the proof must establish
$0\in\mathscr R_b(\Omega,\varphi)$, that is, the existence of
\begin{equation}\label{eq:zero-approximation}
 \begin{gathered}
 b_j\in\PSH(\Omega)\cap L^\infty(\Omega),\qquad
 b_j\to\varphi\quad\text{in }L^1_{\loc},\\
 \MA(b_j)\to0\quad\text{vaguely}.
 \end{gathered}
\end{equation}
Continuous approximants suffice for the smooth classification.  The
following lemma and corollary record this fact and the corresponding
construction for moving measures.

\begin{lemma}[Smoothing continuous approximants]\label{lem:continuous-approximation}
Let $\Omega\subset\C^n$ ($n\geq2$) be a bounded domain, and let bounded
continuous psh functions $b_k$ satisfy $b_k\to\varphi$ in $L^1_{\loc}$
and $\MA(b_k)\to\nu$ vaguely.  Then $\nu\in\mathscr R_{\mathrm{sm}}(\Omega,\varphi)$.
In particular, the existence of continuous $b_j$ in
\eqref{eq:zero-approximation} is equivalent to
$0\in\mathscr R_{\mathrm{sm}}(\Omega,\varphi)$.
\end{lemma}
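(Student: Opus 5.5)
We smooth each $b_k$ by Richberg approximation and then take a diagonal subsequence, copying the smoothing step already used in the proof of Theorem~\ref{thm:tracking}.

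\textbf{Approximation of a single $b_k$.} Fix $k$ and put $C_\Omega=1+\sup_\Omega|z|^2$. Richberg approximation \cite{DemaillyCADG}*{Chapter~I, Theorem~5.21}, applied to the continuous psh function $b_k$ with the strictly psh function $|z|^2$ added, gives smooth strictly psh functions $v_{k,\delta}$ on $\Omega$ such that
\[
 b_k+\delta|z|^2\leq v_{k,\delta}\leq b_k+\delta(|z|^2+1).
\]
Each $v_{k,\delta}$ is bounded, because $b_k$ is bounded and $\Omega$ is bounded. As $\delta\downarrow0$, $v_{k,\delta}\to b_k$ uniformly on $\Omega$. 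Bedford--Taylor continuity under uniform (locally uniform suffices) convergence of locally bounded psh functions then gives $\MA(v_{k,\delta})\to\MA(b_k)$ vaguely.

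\textbf{Choice of $\delta_k$.} Choose $0<\delta_k\leq1/k$ such that
\[
 \Bigl|\int_\Omega f_j\,d\bigl(\MA(v_{k,\delta_k})-\MA(b_k)\bigr)\Bigr|<1/k
 \qquad(1\leq j\leq k),
\]
where $(f_j)$ is the family from Lemma~\ref{lem:vague-metrization}. Set $u_k=v_{k,\delta_k}$. Then $\|u_k-b_k\|_{L^\infty(\Omega)}\leq\delta_kC_\Omega\to0$, so $u_k\to\varphi$ in $L^1_{\loc}(\Omega)$. The estimate above gives
\[
 d_{\mathrm v}\bigl(\MA(u_k),\MA(b_k)\bigr)\leq\frac1k+2^{-k}\to0 .
\]
Since $\MA(b_k)\to\nu$ in the metric $d_{\mathrm v}$, the triangle inequality yields $\MA(u_k)\to\nu$ vaguely. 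Hence $\nu\in\mathscr R_{\mathrm{sm}}(\Omega,\varphi)$.

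\textbf{The ``in particular'' assertion.} If continuous bounded $b_j$ satisfying \eqref{eq:zero-approximation} exist, the first part with $\nu=0$ gives $0\in\mathscr R_{\mathrm{sm}}(\Omega,\varphi)$. Conversely, if $0\in\mathscr R_{\mathrm{sm}}(\Omega,\varphi)$, the smooth approximants realizing $0$ are in particular continuous and satisfy \eqref{eq:zero-approximation}.

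The only delicate point is that the Richberg approximants are globally bounded and converge uniformly on all of $\Omega$, not just locally. This is what the two-sided bound gives, and it lets Bedford--Taylor convergence apply to the fixed $b_k$. No common bound in $k$ is needed, because $\delta_k$ is chosen separately for each $k$, exactly as in Theorem~\ref{thm:tracking}.
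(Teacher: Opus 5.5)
Your proof is correct and is essentially the paper's argument: the paper applies the smooth part of Theorem~\ref{thm:tracking} with $\mu_k=0$, whose smoothing step is exactly your Richberg approximation with a diagonal choice of $\delta_k$, and then concludes with the same triangle inequality in $d_{\mathrm v}$. You carry out that smoothing step directly, which skips the slice perturbation and the normalization $u_k\leq b_k$; the lemma needs neither.
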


\begin{proof}
Apply the smooth part of Theorem~\ref{thm:tracking} to $b_k$
with $\mu_k=0$. Since each $b_k$ is continuous, it gives
smooth bounded psh functions $u_k$ satisfying
\[
 \|u_k-b_k\|_{L^1(\Omega)}\longrightarrow0,
 \qquad
 d_{\mathrm v}\bigl(\MA(u_k),\MA(b_k)\bigr)\longrightarrow0.
\]
Thus $u_k\to\varphi$ in $L^1_{\loc}(\Omega)$.
Moreover, $\MA(b_k)\to\nu$ vaguely by assumption, so
\[
 d_{\mathrm v}(\MA(u_k),\nu)
 \leq
 d_{\mathrm v}\bigl(\MA(u_k),\MA(b_k)\bigr)
 +d_{\mathrm v}(\MA(b_k),\nu)
 \longrightarrow0,
\]
which implies
$\MA(u_k)\to\nu$ vaguely. Therefore $(u_k)_{k\geq 1}$ realizes $\nu$
in the smooth class, proving
$\nu\in\mathscr R_{\mathrm{sm}}(\Omega,\varphi)$.

Taking $\nu=0$ shows that continuous bounded psh approximants
with measure limit zero imply
$0\in\mathscr R_{\mathrm{sm}}(\Omega,\varphi)$.
Conversely, if $0\in\mathscr R_{\mathrm{sm}}(\Omega,\varphi)$,
its realizing sequence consists of smooth, hence continuous,
bounded psh functions and satisfies
\eqref{eq:zero-approximation}.
\end{proof}

\begin{corollary}[Approximation of prescribed measures in the fixed metric]
\label{thm:moving-zero}
Let $\Omega\subset\C^n$ ($n\geq2$) be a bounded domain, and let
$\varphi\in\PSH(\Omega)\cap L^\infty(\Omega)$.
If $0\in\mathscr R_b(\Omega,\varphi)$, then for every sequence
$(\mu_k)_{k\geq 1}\subset\cM_+(\Omega)$ there are bounded psh functions
$u_k\to\varphi$ in $L^1_{\loc}(\Omega)$ such that
\[
 d_{\mathrm v}(\MA(u_k),\mu_k)\longrightarrow0.
\]
If $0\in\mathscr R_{\mathrm{sm}}(\Omega,\varphi)$, the functions can be
chosen smooth and bounded.
\end{corollary}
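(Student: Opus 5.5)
The plan is to combine a realizing sequence for zero with Theorem~\ref{thm:tracking}, taking the zero-realizing functions as the moving background. Since $0\in\mathscr R_b(\Omega,\varphi)$, choose bounded psh functions $b_k\to\varphi$ in $L^1_{\loc}(\Omega)$ with $\MA(b_k)\to0$ vaguely. Apply Theorem~\ref{thm:tracking} to the sequences $b_k$ and $\mu_k$. This yields bounded psh $u_k\leq b_k$ with $\|u_k-b_k\|_{L^1(\Omega)}\to0$ and $d_{\mathrm v}(\MA(u_k),\MA(b_k)+\mu_k)\to0$. The $L^1$ statement immediately gives $u_k\to\varphi$ in $L^1_{\loc}(\Omega)$, since $\Omega$ is bounded and $b_k\to\varphi$ locally in $L^1$.

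It remains to compare $\MA(b_k)+\mu_k$ with $\mu_k$ in the fixed metric. By the translation invariance \eqref{eq:translation-invariance},
\[
 d_{\mathrm v}(\MA(b_k)+\mu_k,\mu_k)=d_{\mathrm v}(\MA(b_k),0).
\]
This tends to zero because $\MA(b_k)\to0$ vaguely and $d_{\mathrm v}$ induces the vague topology by Lemma~\ref{lem:vague-metrization}. The triangle inequality then gives $d_{\mathrm v}(\MA(u_k),\mu_k)\to0$.

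The invariance identity carries the whole comparison, and it is valid even though $\mu_k$ need not converge or have common local mass bounds. The reason is that each coordinate $\int f_j\,d(\alpha+\gamma-\beta-\gamma)=\int f_j\,d(\alpha-\beta)$ is finite for Radon measures and compactly supported $f_j$.

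For the smooth case, take $b_k$ smooth and bounded realizing $0\in\mathscr R_{\mathrm{sm}}(\Omega,\varphi)$. Each $b_k$ is then continuous, so the smooth part of Theorem~\ref{thm:tracking} yields smooth bounded $u_k$ with the same estimates, and the argument is unchanged.

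I expect no serious obstacle. The only point to check is that the required estimate is formulated in the metric rather than as vague convergence of $\mu_k$; this is exactly what the invariance identity handles.
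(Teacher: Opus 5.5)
Your proposal is correct and matches the paper's proof. You apply Theorem~\ref{thm:tracking} to a zero-realizing sequence $b_k$ with targets $\mu_k$ and conclude by the triangle inequality; you also make explicit the translation invariance \eqref{eq:translation-invariance}, which the paper uses implicitly to get $d_{\mathrm v}(\MA(b_k)+\mu_k,\mu_k)=d_{\mathrm v}(\MA(b_k),0)\to0$, and the smooth case goes through identically.
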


\begin{proof}
Choose $b_k$ satisfying \eqref{eq:zero-approximation}.  Theorem~\ref{thm:tracking} applied to
$b_k$ and $\mu_k$ gives
\begin{align*}
 d_{\mathrm v}(\MA(u_k),\mu_k)
 &\leq d_{\mathrm v}\bigl(\MA(u_k),\MA(b_k)+\mu_k\bigr)+d_{\mathrm v}\bigl(\MA(b_k)+\mu_k,\mu_k\bigr)\longrightarrow0.
\end{align*}
If $0\in\mathscr R_{\mathrm{sm}}(\Omega,\varphi)$, choose the $b_k$ smooth
and use the smooth part of Theorem~\ref{thm:tracking}.
\end{proof}

\section{Bounded approximation and the main classification}
\label{sec:geometry}

By Theorem~\ref{thm:structure}, the conclusions of
Theorem~\ref{thm:classification} for bounded approximants follow
once \eqref{eq:zero-approximation} is established.  For a bounded maximal $\varphi$,
the constant sequence suffices.  For a general bounded psh function,
Lelong's maximal approximants have zero Monge--Amp\`ere measure, but
need not be globally bounded.  The geometric conditions below allow us
to impose this bound while preserving the approximation on compact sets.

Lelong's approximation theorem states that, on a pseudoconvex domain in
$\C^n$ ($n\geq2$), continuous maximal psh functions are locally
$L^1$-dense among locally bounded psh functions; see \cite{Lelong} and the
precise restatement \cite{BacklundPersson}*{Theorem~2.1, p.~263}.
The approximants are defined on all of $\Omega$ and convergence is in
the local $L^1$ topology; the theorem supplies no global upper or lower bound.

If the continuous maximal approximants $\psi_j$ supplied on $\Omega$
are bounded above, global boundedness follows by lower truncation:
choose $N_j$ so that $\psi_j>-N_j$ near $K_j$ and put
$b_j=\max\{\psi_j,-N_j\}$.  Then $b_j$ is bounded and continuous,
agrees with $\psi_j$ near $K_j$, and has the same local approximation
error and vanishing Monge--Amp\`ere measure there.  An upper bound,
however, does not follow from local convergence.  On $\D^n$ ($n\geq2$),
where $\D=\{\zeta\in\C:|\zeta|<1\}$, the functions
\[
 \psi_j(z)=\max\left\{j^{-1}\log\frac1{|1-z_1|},-1\right\}
\]
are continuous, maximal, and locally uniformly convergent to zero,
but unbounded above.  Maximality follows because they depend on only
one variable.  The $\mathcal B_c$-convexity condition defined below
supplies the global boundedness that Lelong's theorem alone does not provide.

\subsection{Hulls and \texorpdfstring{$\mathcal B_c$}{Bc}-convexity}

Let $\Omega\subset\C^n$ be a bounded domain. Define
\[
 \mathcal B_c(\Omega)
 :=\PSH(\Omega)\cap C(\Omega)\cap L^\infty(\Omega),
\]
the class of continuous psh functions that are globally bounded on
$\Omega$. For a nonempty compact set $K\subset\subset\Omega$, its
\emph{$\mathcal B_c$-hull} is
\[
 \widehat K_{\mathcal B_c}
 :=\{z\in\Omega:b(z)\leq\sup_K b
       \text{ for every }b\in\mathcal B_c(\Omega)\}.
\]
Thus a point $z\in\Omega$ lies outside this hull precisely when some
$b\in\mathcal B_c(\Omega)$ satisfies $b(z)>\sup_K b$.
The hull contains $K$ and is closed relative to $\Omega$, since it is
an intersection of relatively closed sublevel sets of continuous functions.

We call $\Omega$ \emph{$\mathcal B_c$-convex} if
\begin{equation}\label{eq:compact-hulls}
 \widehat K_{\mathcal B_c}\subset\subset\Omega
 \qquad\text{for every nonempty compact }K\subset\subset\Omega.
\end{equation}
Since each hull is relatively closed, this means that it is compact
in $\Omega$. This is precisely the compact-hull condition referred to
in the Introduction.

The separating functions in the hull definition have two uses: they
produce a continuous psh exhaustion and allow a maximal approximant
to be made globally bounded without changing it near a prescribed
compact set.

\begin{lemma}[Separation and pseudoconvexity]\label{lem:Bc-pseudoconvex}
Let $\Omega\subset\C^n$ be a bounded $\mathcal B_c$-convex domain.
For every nonempty compact $K\subset\subset\Omega$ and every open set
$U\subset\subset\Omega$ containing $\widehat K_{\mathcal B_c}$, there
is $b\in\mathcal B_c(\Omega)$ such that
\begin{equation}\label{eq:Bc-separation}
 b\leq0\quad\text{on }K,\qquad
 b>1\quad\text{on }\partial U.
\end{equation}
Moreover, $\Omega$ admits a continuous psh exhaustion and is therefore
pseudoconvex.
\end{lemma}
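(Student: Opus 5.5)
The plan has two parts: a compactness argument that produces the separating function in \eqref{eq:Bc-separation}, and a gluing of such functions along an exhaustion to build the exhaustion function.

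\emph{Separation.} Fix $K$ and $U$ with $\widehat K_{\mathcal B_c}\subset U\subset\subset\Omega$. The boundary $\partial U$ is compact, lies in $\Omega$, and is disjoint from the hull.
\begin{enumerate}
 \item For each $w\in\partial U$ there is $b_w\in\mathcal B_c(\Omega)$ with $b_w(w)>\sup_K b_w$. Subtracting $\sup_K b_w$ and rescaling by a positive constant, we may assume $\sup_K b_w=0$ and $b_w(w)>2$.
 \item By continuity, $b_w>1$ on an open neighbourhood $V_w$ of $w$.
 \item Compactness of $\partial U$ gives finitely many points $w_1,\dots,w_M$ whose neighbourhoods cover $\partial U$.
 \item Put $b=\max_i b_{w_i}$. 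A finite maximum of continuous bounded psh functions is again in $\mathcal B_c(\Omega)$.
 \item Then $b\le0$ on $K$, since each $b_{w_i}\le0$ there, and $b>1$ on $\partial U$.
\end{enumerate}
If $\partial U=\varnothing$, then $U$ is open and closed in $\Omega$, so connectedness gives $U=\Omega$. This contradicts $U\subset\subset\Omega$, since a bounded domain is not compact. Hence $\partial U$ is nonempty and the construction applies.

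\emph{Exhaustion.} Choose compact sets $L_1\subset L_2^\circ\subset L_2\subset\cdots$ exhausting $\Omega$. Inductively choose open sets $U_k\subset\subset\Omega$ with $\widehat{L_k}_{\mathcal B_c}\cup\overline U_{k-1}\subset U_k$ and $L_{k+1}\subset\overline U_k$. This is possible because $\mathcal B_c$-convexity makes each hull compact in $\Omega$. Set $K_k=\overline U_{k-1}\cup L_k$ and enlarge $U_k$ so that it contains $\widehat{K_k}_{\mathcal B_c}$, which is again compact. The first part then gives $b_k\in\mathcal B_c(\Omega)$ with $b_k\le0$ on $\overline U_{k-1}$ and $b_k>1$ on $\partial U_k$.

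To glue, I plan a standard patching. On $U_k\setminus\overline U_{k-1}$ the separation only controls $b_k$ on $\partial U_k$, so I need more. Consider $\max\{b_k,0\}$ restricted to $\Omega\setminus\overline U_k$ rather than on all of $\partial U_k$. I also want $b_k>1$ on all of $\Omega\setminus U_k$. To arrange this, define $\widetilde b_k=b_k$ on $U_k$ and $\widetilde b_k=\max\{b_k,1+\delta_k\}$ outside. This does not work directly, so the natural trick is the gluing lemma: the function equal to $\max\{b_k,\,1/2\}$ near $\Omega\setminus U_k$ and $b_k$ near $K_k$. This fails unless $b_k<1/2$ there, which holds on $\overline U_{k-1}$.

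The cleaner route is to set $c_k=\max\{b_k,0\}$, which is psh, continuous, bounded, vanishes on $\overline U_{k-1}$, and exceeds $1$ on $\partial U_k$. Then put
\[
 \rho=\sum_k c_k.
\]
On each compact set only finitely many $c_k$ are nonzero, namely those with $\overline U_{k-1}$ not containing it. So $\rho$ is a locally finite sum of continuous psh functions, hence continuous and psh.

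Properness is the key issue and the main obstacle: I must show $\rho\ge j$ outside $U_{j+1}$. Off $U_k$, I need $c_k\ge1$, but this is controlled only on $\partial U_k$, not beyond it. To fix this, I would replace $c_k$ by $\widetilde c_k$, equal to $\max\{b_k,1\}$ outside $U_k$ and to $\max\{b_k,0\}$ inside. This is incompatible in general, so instead I would invoke the maximum principle for bounded psh functions on the components of $\Omega\setminus\overline U_k$. That fails too, since boundary behaviour at $\partial\Omega$ is unknown.

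The final choice is therefore to use the hull itself. Take $x\notin U_k$ and apply the first part with $K=K_k\cup\{x\}$? That requires $x$ to lie outside a hull. Instead, for each point $x\in\Omega\setminus U_k$ we have $x\notin\widehat{K_k}_{\mathcal B_c}$, so some function separates $x$. Using $\rho_k=\sup$ over normalized separating functions, continuity and a local compactness argument applied to the annulus $\overline U_{k+1}\setminus U_k$ give $c_k\ge1$ there. Summing then yields $\rho\ge j-1$ on $\overline U_{j+1}\setminus U_j$, so $\rho$ is an exhaustion.

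Pseudoconvexity of $\Omega$ then follows from the standard characterization of pseudoconvex domains by the existence of a continuous psh exhaustion.
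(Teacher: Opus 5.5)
Your separation argument is the same as the paper's: normalize a separating function at each point of $\partial U$, cover $\partial U$ by finitely many neighbourhoods where it exceeds $1$, and take the maximum. Your remark that $\partial U\neq\varnothing$ is a harmless addition.

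\textbf{The gap is in the exhaustion.} In your final version, each $c_k=\max\{b_k,0\}$ is known to exceed $1$ only on the single compact annulus $\overline U_{k+1}\setminus U_k$. On $\overline U_{j+1}\setminus U_j$, the terms $c_k$ with $k<j$ are only known to be $\geq 0$. So the unweighted sum gives $\rho\geq 1$ there, not $\rho\geq j-1$, and properness is not proved. The appeal to a supremum over all normalized separating functions is also unjustified: an infinite supremum of psh functions need not be upper semicontinuous, psh, or bounded.

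The missing observation is that the patch you rejected as ``incompatible in general'' is in fact compatible, and it is exactly what the paper uses. Since $\partial U_k$ is compact and $b_k$ is continuous with $b_k>1$ on $\partial U_k$, we have $b_k>1$ on an open neighbourhood $N_k$ of $\partial U_k$. On $N_k$, $\max\{b_k,0\}=b_k=\max\{b_k,1\}$. Define $\psi_k=\max\{b_k,0\}$ on $U_k$ and $\psi_k=\max\{b_k,1\}$ on $\Omega\setminus U_k$. Then $\psi_k$ agrees with a continuous psh function on each of the open sets $U_k$, $N_k$, $\Omega\setminus\overline U_k$, which cover $\Omega$. Hence $\psi_k\in\mathcal B_c(\Omega)$, $\psi_k$ vanishes on $K_k$, and $\psi_k\geq 1$ on all of $\Omega\setminus U_k$, not just on $\partial U_k$. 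The sum $\tau=\sum_k\psi_k$ is locally finite, and $\{\tau<N\}\subset\bigcup_{k\leq N}U_k\subset\subset\Omega$, because outside this union the first $N$ summands are each at least $1$.

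Alternatively, your annulus construction can be repaired by weighting: set $\rho=\sum_k k\,c_k$. Then $\rho\geq j\,c_j>j$ on $\overline U_{j+1}\setminus U_j$. Since the $U_k$ increase and exhaust $\Omega$, this gives $\{\rho\leq N\}\subset U_N$.
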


\begin{proof}
For each $\xi\in\partial U$, the hull definition gives
$f_\xi\in\mathcal B_c(\Omega)$ with
$a_\xi:=f_\xi(\xi)-\sup_K f_\xi>0$. The normalized function
\[
 \widetilde f_\xi=\frac{2}{a_\xi}(f_\xi-\sup_K f_\xi)
\]
is nonpositive on $K$ and equals $2$ at $\xi$. By continuity it
exceeds $1$ on a neighborhood of $\xi$. Finitely many of these
neighborhoods cover $\partial U$. The maximum of the corresponding
functions belongs to $\mathcal B_c(\Omega)$ and satisfies
\eqref{eq:Bc-separation}.

Choose compact sets $K_j\subset K_{j+1}^\circ$ exhausting $\Omega$.
For each $j$, choose $U_j\subset\subset\Omega$ containing
$\widehat{K_j}_{\mathcal B_c}$ and a function $b_j$ satisfying
\eqref{eq:Bc-separation} for $K_j,U_j$. Define
\[
 \psi_j(z)=
 \begin{cases}
  \max\{b_j(z),0\},&z\in U_j,\\
  \max\{b_j(z),1\},&z\in\Omega\setminus U_j.
 \end{cases}
\]
Since $b_j>1$ near $\partial U_j$, the two expressions agree there.
Thus $\psi_j$ is continuous and psh, with
\[
 \psi_j\geq0,\qquad \psi_j|_{K_j}=0,\qquad
 \psi_j\geq1\quad\text{on }\Omega\setminus U_j.
\]
The sum $\tau=\sum_{j=1}^\infty\psi_j$ is locally finite. Indeed,
if $z\in K_J^\circ$, all terms with $j\geq J$ vanish throughout
$K_J^\circ$. Hence $\tau$ is a finite-valued continuous psh function.
For every positive integer $N$,
\[
 \{\tau<N\}\subset\bigcup_{j=1}^N U_j\subset\subset\Omega,
\]
because outside this union the first $N$ terms are each at least $1$.
These sublevel sets exhaust $\Omega$, so $\tau$ is an exhaustion.
The psh exhaustion criterion
\cite{DemaillyCADG}*{Chapter~I, Theorem~7.2(c)} implies that
$\Omega$ is pseudoconvex.
\end{proof}

The converse fails even for bounded domains, as the following example
shows.

\begin{example}[A pseudoconvex domain that is not $\mathcal B_c$-convex]
\label{ex:pseudoconvex-not-Bc}
Let $\D=\{w\in\C:|w|<1\}$ and $\D^*=\D\setminus\{0\}$. The domain
\[
 \Omega=\D^{n-1}\times(\D\setminus\{0\})\qquad(n\geq2)
\]
is pseudoconvex, being a product of planar domains.

Fix $0<r<1$ and put $K=\{0\}^{n-1}\times\{|w|=r\}$.
For every $b\in\mathcal B_c(\Omega)$, the bounded subharmonic
function $w\mapsto b(0,w)$ extends subharmonically across zero.
The maximum principle on $\{|w|<r\}$ gives
\[
 b(0,w)\leq\sup_{|\zeta|=r}b(0,\zeta)=\sup_K b
 \qquad(0<|w|\leq r).
\]
Consequently,
\[
 \{0\}^{n-1}\times\{0<|w|\leq r\}
 \subset\widehat K_{\mathcal B_c}.
\]
This set accumulates at the missing origin, so the hull is not
relatively compact in $\Omega$. Hence $\Omega$ is not
$\mathcal B_c$-convex.
\end{example}

For comparison, a domain $\Omega$ is \emph{hyperconvex} if there is
$\rho\in\PSH(\Omega)\cap C(\Omega)$ with $\rho<0$,
$\sup_\Omega\rho=0$, and
\[
 \{\rho<c\}\subset\subset\Omega\qquad\text{for every }c<0.
\]
Proposition~\ref{prop:Bc-examples} shows that every bounded hyperconvex
domain and every bounded pseudoconvex Runge domain is
$\mathcal B_c$-convex. We also write
$H^\infty(\Omega)=\mathcal O(\Omega)\cap L^\infty(\Omega)$, where $\mathcal O(\Omega)$ denotes the space of holomorphic functions
on $\Omega$.
If the hulls defined by the inequalities $|f(z)|\leq\sup_K|f|$
for all $f\in H^\infty(\Omega)$ are compact in $\Omega$, then
$\Omega$ is $\mathcal B_c$-convex as well, since
$|f|^2\in\mathcal B_c(\Omega)$ for every such $f$.

\begin{proposition}[Bounded approximation on $\mathcal B_c$-convex domains]
\label{prop:Bc-accessibility}
Let $\Omega\subset\C^n$ ($n\geq2$) be a bounded
$\mathcal B_c$-convex domain. For every
$\varphi\in\PSH(\Omega)\cap L^\infty(\Omega)$, there are bounded
continuous psh functions $b_j\to\varphi$ in $L^1_{\loc}(\Omega)$
that are maximal near $K_j$.  Moreover,
\[
 \mathscr R_{\mathrm{sm}}(\Omega,\varphi)=\cM_+(\Omega),
\]
and every vaguely closed subset of $\cM_+(\Omega)$, including the empty
set, is the full vague cluster set of the measures of one smooth bounded
psh sequence converging to $\varphi$ in $L^1_{\loc}(\Omega)$.  The conclusion of
Corollary~\ref{thm:moving-zero} also holds with smooth approximants.
\end{proposition}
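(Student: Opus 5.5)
The plan is to paste Lelong's maximal approximants onto a bounded function built from the separation Lemma~\ref{lem:Bc-pseudoconvex}.

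By Lemma~\ref{lem:Bc-pseudoconvex}, $\Omega$ is pseudoconvex. Lelong's theorem \cite{BacklundPersson}*{Theorem~2.1} therefore gives continuous maximal psh functions $\psi$ on $\Omega$ that are arbitrarily close to $\varphi$ in $L^1(K)$ for any compact $K$. Fix $j$, and let $L_j$ be a compact neighbourhood of $K_j$ (say $L_j=K_{j+1}$). Choose $\psi_j$ continuous and maximal with $\|\psi_j-\varphi\|_{L^1(L_j)}<1/j$.

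Let $\widehat L_{j}$ denote the $\mathcal B_c$-hull of $L_j$; by \eqref{eq:compact-hulls} it is compact in $\Omega$. Choose an open $U_j\subset\subset\Omega$ containing $\widehat L_j$, and take $b\in\mathcal B_c(\Omega)$ with $b\le0$ on $L_j$ and $b>1$ on $\partial U_j$. Since $\psi_j$ is continuous, it is bounded on the compact set $\overline U_j$, say $|\psi_j|\le M_j$ there.

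Put $\beta=2M_j b + c$, choosing the constant $c$ so that
\[
 \beta<\psi_j \text{ on } L_j,\qquad \beta>\psi_j \text{ on } \partial U_j .
\]
For instance, $c=-M_j-1$ works: on $L_j$ we get $\beta\le -M_j-1<\psi_j$, and on $\partial U_j$ we get $\beta>2M_j-M_j-1$. This last bound only gives $\beta>M_j-1$, so I would rescale to $\beta=(2M_j+2)b-M_j-1$, which gives $\beta>M_j+1>\psi_j$ on $\partial U_j$.

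Now define the pasted function
\[
 \widetilde b_j=
 \begin{cases}
  \max\{\psi_j,\beta\} & \text{on } U_j,\\
  \beta & \text{on } \Omega\setminus U_j .
 \end{cases}
\]
The two expressions agree near $\partial U_j$ by continuity and the strict inequality there, so $\widetilde b_j$ is continuous and psh. It is bounded because $\beta$ is bounded on $\Omega$ and $\psi_j$ is bounded on $\overline U_j$. Near $L_j$ it equals $\psi_j$, so it is maximal near $K_j$, and
\[
 \|\widetilde b_j-\varphi\|_{L^1(K_j)}<1/j .
\]
Since the $K_j$ exhaust $\Omega$, we get $\widetilde b_j\to\varphi$ in $L^1_{\loc}$. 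Moreover $\MA(\widetilde b_j)$ vanishes on $K_j^\circ$, so $\MA(\widetilde b_j)\to0$ vaguely.

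The remaining conclusions then follow formally. Lemma~\ref{lem:continuous-approximation} gives $0\in\mathscr R_{\mathrm{sm}}(\Omega,\varphi)$, and Theorem~\ref{thm:structure} then gives $\mathscr R_{\mathrm{sm}}=\cM_+(\Omega)$. Theorem~\ref{thm:structure} also yields the cluster-set realization, including the empty set, since $\mathscr R_{\mathrm{sm}}\neq\varnothing$. Finally, Corollary~\ref{thm:moving-zero} gives the moving-measure statement in the smooth case.

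The main obstacle is the pasting step. It needs a bounded psh barrier that lies below $\psi_j$ on $L_j$ and above it on $\partial U_j$, and this is exactly what the compactness of the hull provides. The key point to verify is that the constants depend only on $\sup_{\overline U_j}|\psi_j|$, which is finite because $U_j\subset\subset\Omega$ and $\psi_j$ is continuous.
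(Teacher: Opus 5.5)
Your proposal is correct and follows the paper's proof essentially step for step. You paste Lelong's continuous maximal approximants onto a rescaled separating function $A\,a+c$ from Lemma~\ref{lem:Bc-pseudoconvex}, then conclude via Lemma~\ref{lem:continuous-approximation}, Theorem~\ref{thm:structure}, and Corollary~\ref{thm:moving-zero}, exactly as the paper does. The only differences are cosmetic. You normalize with $\sup_{\overline U_j}|\psi_j|$ instead of using $\min_{K_j}\psi_j$ and $\max_{\partial U_j}\psi_j$. Your auxiliary compact set $L_j=K_{j+1}$ is harmless but unnecessary, since the strict inequality $\beta<\psi_j$ on $K_j$ already persists to a neighborhood of $K_j$ by continuity.
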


\begin{proof}
By Lemma~\ref{lem:Bc-pseudoconvex}, $\Omega$ is pseudoconvex.
Lelong's approximation theorem \cite{Lelong} provides continuous maximal psh
functions $\psi_j$ on $\Omega$ such that
\[
 \|\psi_j-\varphi\|_{L^1(K_j)}<1/j.
\]
We construct bounded continuous psh functions $b_j$ that agree
with $\psi_j$ near $K_j$.

Fix $j$. By $\mathcal B_c$-convexity, choose an open set
$U_j\subset\subset\Omega$ containing
$\widehat{(K_j)}_{\mathcal B_c}$.
Lemma~\ref{lem:Bc-pseudoconvex} gives
$a_j\in\mathcal B_c(\Omega)$ satisfying
\[
 a_j\leq0\quad\text{on }K_j,\qquad
 a_j>1\quad\text{on }\partial U_j.
\]
Since $\psi_j$ is continuous, the numbers
$
 m_j=\min_{K_j}\psi_j,\
 M_j=\max_{\partial U_j}\psi_j
$
are finite. Choose
\[
 c_j<m_j-1,\qquad
 A_j>\max\{0,M_j-c_j+1\},
\]
and set $h_j=A_ja_j+c_j$. Then $h_j$ is globally bounded,
continuous, and psh. Moreover,
\[
 h_j\leq c_j<\psi_j-1\quad\text{on }K_j,
 \qquad
 h_j>A_j+c_j>\psi_j+1\quad\text{on }\partial U_j.
\]
By continuity, $h_j<\psi_j$ near $K_j$ and
$h_j>\psi_j$ near $\partial U_j$.

Define
\[
 b_j=
 \begin{cases}
  \max\{\psi_j,h_j\},&\text{on }U_j,\\
  h_j,&\text{on }\Omega\setminus U_j.
 \end{cases}
\]
Near $\partial U_j$, both expressions equal $h_j$.
Thus $b_j$ is continuous and psh on $\Omega$.
It is globally bounded because
\[
 \inf_\Omega h_j\leq b_j
 \leq
 \max\left\{\sup_{\overline U_j}\psi_j,\sup_\Omega h_j\right\}
 <\infty.
\]
Near $K_j$, we have $b_j=\psi_j$, so $b_j$ is maximal there and
\[
 \|b_j-\varphi\|_{L^1(K_j)}<1/j.
\]
Since the compact sets $K_j$ exhaust $\Omega$, it follows that
$b_j\to\varphi$ in $L^1_{\loc}(\Omega)$.

Furthermore, $\MA(b_j)$ vanishes near $K_j$.
Every compact subset of $\Omega$ is contained in $K_j$ for all
sufficiently large $j$, hence $\MA(b_j)\to0$ vaguely.
Lemma~\ref{lem:continuous-approximation} therefore gives
$
 0\in\mathscr R_{\mathrm{sm}}(\Omega,\varphi).
$
The remaining conclusions follow from Theorem~\ref{thm:structure}
and the smooth part of Corollary~\ref{thm:moving-zero}.
\end{proof}

\begin{proposition}[Examples of $\mathcal B_c$-convex domains]
\label{prop:Bc-examples}
Every bounded hyperconvex domain and every bounded pseudoconvex Runge
domain is $\mathcal B_c$-convex. There is a bounded balanced
$\mathcal B_c$-convex domain that is not hyperconvex.
By Chen's theorem \cite{Chen2021}*{Theorem~1.1}, every bounded
pseudoconvex domain with H\"older boundary is also $\mathcal B_c$-convex.
\end{proposition}

\begin{proof}
Suppose first that $\Omega$ is hyperconvex, and choose $\rho$ as in
its definition. Put $b=\max\{\rho,-1\}\in\mathcal B_c(\Omega)$.
For a nonempty compact $K\subset\subset\Omega$, set $s=\sup_K b<0$
and choose $c$ with $s<c<0$. Then
\[
 \widehat K_{\mathcal B_c}\subset\{b\leq s\}\subset\{\rho<c\}\subset\subset\Omega.
\]
This proves the hyperconvex assertion.

Suppose that $\Omega$ is bounded, pseudoconvex, and Runge.  Since
$|p|^2\in\mathcal B_c(\Omega)$ for every polynomial $p$, each
$z\in\widehat K_{\mathcal B_c}$ satisfies
$|p(z)|\leq\sup_K|p|$.  Polynomial approximation on $K\cup\{z\}$
gives the same inequality for every $f\in\mathcal O(\Omega)$.
Thus $\widehat K_{\mathcal B_c}$ lies in the holomorphic hull of $K$,
which is compact in $\Omega$ by holomorphic convexity of $\Omega$.

We construct a bounded $\mathcal B_c$-convex domain that is not
hyperconvex, following
\cite{JarnickiPflug2022}*{Example~2.12}.
Choose distinct points
$(a_j)_{j\geq1}\subset\partial\D\setminus\{1\}$ that are dense in
$\partial\D$, and define
\[
 s_j=\frac{2^{-j}}{1+|\log|1-a_j||},
 \qquad
 \alpha_j=\frac{s_j}{\sum_{k=1}^{\infty}s_k}.
\]
Then $\alpha_j>0$, $\sum_j\alpha_j=1$, and
\[
 \sum_{j=1}^{\infty}\alpha_j|\log|1-a_j||<\infty.
\]
Set
\[
 u(z_1,z_2)=\sum_{j=1}^{\infty}
              \alpha_j\log|z_1-a_jz_2|,
 \qquad
 h(z)=e^{u(z)}+\max\{|z_1|,|z_2|\},
\]
 and let
\[
 \Omega_*=\{z\in\C^2:h(z)<1\}.
\]
On any bounded polydisc centered at the origin, choose a constant
$C$ with $\log|z_1-a_jz_2|\leq C$ for every $j$.
The psh partial sums
$\sum_{j=1}^N\alpha_j(\log|z_1-a_jz_2|-C)$ decrease to $u-C$,
since $\sum_j\alpha_j=1$. Their limit is finite at points with
$z_2=0$ and $z_1\ne0$, so it is not identically $-\infty$.
Hence $u$ is psh on $\C^2$, and so is $h$.
Moreover, $h(\lambda z)=|\lambda|h(z)$ and
$h(z)\geq\max\{|z_1|,|z_2|\}$, so $\Omega_*$ is open, bounded,
balanced, and connected.
Moreover,
\[
 \inf\{t>0:z/t\in\Omega_*\}=h(z),
\]
so $h$ is its Minkowski functional.
Since its Minkowski functional $h$ is plurisubharmonic,
$\Omega_*$ is pseudoconvex by
\cite{JarnickiPflug2020}*{Proposition~2.2.31}.
We next prove that $\Omega_*$ is Runge.
Let $f\in\mathcal O(\Omega_*)$
and write its Taylor expansion at the origin as
$\sum_{\ell\geq0}P_\ell$, where $P_\ell$ is homogeneous of degree
$\ell$. For each compact $K\subset\subset\Omega_*$, choose $r>1$
such that $rK\subset\Omega_*$. By balance,
\[
 C:=\{\lambda z:z\in K,\ |\lambda|\leq r\}
 \subset\subset\Omega_*.
\]
Applying Cauchy's estimate to $\lambda\mapsto f(\lambda z)$ gives
\[
 \sup_K|P_\ell|\leq Mr^{-\ell},
 \qquad M=\sup_C|f|.
\]
Evaluating this Taylor series at $\lambda=1$ gives
\[
 \sup_K\left|f-\sum_{\ell=0}^NP_\ell\right|
 \leq\frac{Mr^{-N-1}}{1-r^{-1}}\longrightarrow0.
\]
Thus $\Omega_*$ is Runge and, being bounded and pseudoconvex,
is $\mathcal B_c$-convex by the preceding argument.
On the other hand, the choice of the weights implies
\[
 u(1,1)=\sum_{j=1}^{\infty}\alpha_j\log|1-a_j|>-\infty,
 \qquad h(1,1)=e^{u(1,1)}+1>1,
\]
whereas
\[
 u(a_j,1)=-\infty,\qquad h(a_j,1)=1.
\]
Choose a subsequence $a_{j_k}\to1$.
Then $(a_{j_k},1)\to(1,1)$ but
$ h(a_{j_k},1)=1<h(1,1),
$
so the Minkowski functional $h$ is discontinuous.
For bounded pseudoconvex balanced domains, hyperconvexity is
equivalent to continuity of the Minkowski functional; see
\cite{JPZ2000}*{Section~3, immediately before Proposition~3.8}.
Therefore $\Omega_*$ is not hyperconvex.

Finally, Chen's hyperconvexity theorem \cite{Chen2021}*{Theorem~1.1}
reduces the H\"older-boundary assertion to the hyperconvex case above.
\end{proof}

\subsection{Pluripolar deletion and completion of the classification}

Extension across a relatively closed pluripolar set gives another
source of bounded approximants.  We extend the given psh function,
approximate it on the larger domain with measures tending to zero,
and restrict the approximants.  Addition can then be carried out on
the punctured domain, even when it is not $\mathcal B_c$-convex.

\begin{corollary}[Removal of a pluripolar set]\label{cor:completion}
Let $G\subset\C^n$ ($n\geq2$) be a bounded domain, let $E\subset G$
be relatively closed and pluripolar, and put $\Omega:=G\setminus E$.
Every
$\varphi\in\PSH(\Omega)\cap L^\infty(\Omega)$ has a unique bounded psh
extension $\widetilde\varphi$ to $G$.  If $0\in\mathscr R_b(G,\widetilde\varphi)$, then
$0\in\mathscr R_b(\Omega,\varphi)$.  The same implication holds for
$\mathscr R_{\mathrm{sm}}$.  In particular, if $G$ is hyperconvex, then every bounded
psh function $\varphi$ on $G\setminus E$ satisfies
$\mathscr R_{\mathrm{sm}}(G\setminus E,\varphi)=\cM_+(G\setminus E)$
and admits exact realization of every vaguely closed cluster set,
including the empty set, by a smooth bounded psh approximating sequence.
\end{corollary}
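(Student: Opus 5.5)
The plan has three parts. First I extend $\varphi$ to $G$. Then I restrict approximants from $G$ to $\Omega$ and check that the measure limits survive. Finally I settle the hyperconvex case by quoting earlier results.

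\emph{Extension.} A closed pluripolar set $E$ has Lebesgue measure zero and does not disconnect $G$, so $\Omega$ is dense in $G$. For existence, I use the classical Grauert--Remmert type removable singularity theorem for bounded-above psh functions across closed pluripolar sets. Concretely, I define
\[
\widetilde\varphi(z)=\limsup_{\Omega\ni w\to z}\varphi(w)
\]
for $z\in E$, and check that this is psh on $G$. Boundedness is immediate, since $\inf_\Omega\varphi\le\widetilde\varphi\le\sup_\Omega\varphi$. For uniqueness, two psh extensions agree almost everywhere, because $E$ is Lebesgue-null. Psh functions equal almost everywhere coincide, since each is the limit of its mean values on small balls.

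\emph{Restriction.} Suppose $0\in\mathscr R_b(G,\widetilde\varphi)$, and take bounded psh $b_j$ on $G$ with $b_j\to\widetilde\varphi$ in $L^1_{\loc}(G)$ and $\MA(b_j)\to0$ vaguely on $G$. I set $u_j=b_j|_\Omega$. These are bounded psh functions on $\Omega$. Every compact $K\subset\Omega$ is also compact in $G$, so $u_j\to\varphi$ in $L^1(K)$. Bedford--Taylor locality gives $\MA(u_j)=\MA(b_j)|_\Omega$. Every $g\in C_c(\Omega)$, extended by zero, lies in $C_c(G)$, so
\[
\int_\Omega g\,\MA(u_j)=\int_G g\,\MA(b_j)\to0 .
\]
Hence $0\in\mathscr R_b(\Omega,\varphi)$. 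Smoothness is preserved by restriction, which gives the $\mathscr R_{\mathrm{sm}}$ version. The only point needing care is that vague convergence on $G$ restricts to vague convergence on the open subset $\Omega$, and the pairing with test functions above handles exactly that.

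\emph{Hyperconvex case.} If $G$ is hyperconvex (it is bounded by hypothesis), Proposition~\ref{prop:Bc-examples} shows that $G$ is $\mathcal B_c$-convex. Proposition~\ref{prop:Bc-accessibility} then gives $\mathscr R_{\mathrm{sm}}(G,\widetilde\varphi)=\cM_+(G)$, so in particular $0\in\mathscr R_{\mathrm{sm}}(G,\widetilde\varphi)$. The restriction step gives $0\in\mathscr R_{\mathrm{sm}}(\Omega,\varphi)$. Theorem~\ref{thm:structure} applies because $\Omega$ is a bounded domain, and it yields $\mathscr R_{\mathrm{sm}}(\Omega,\varphi)=\cM_+(\Omega)$. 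Since $\mathscr R_{\mathrm{sm}}\ne\varnothing$, it also yields realization of every vaguely closed set, including $\varnothing$, as a full cluster set of smooth bounded approximants.

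The step I expect to be the main obstacle is the extension theorem itself. The rest is bookkeeping. I would quote the standard removability result (e.g.\ Klimek's book, Theorem 2.9.22) rather than reprove it.
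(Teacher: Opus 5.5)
Your proposal is correct and follows the paper's proof essentially step by step. You extend $\varphi$ by the removable-singularity theorem with the $\limsup$ formula (the paper cites Demailly, Chapter~I, Theorem~5.24 for this, and Corollary~5.26 for connectedness of $\Omega$), restrict a realizing sequence using Bedford--Taylor locality and zero extension of test functions, and treat the hyperconvex case via Propositions~\ref{prop:Bc-examples} and~\ref{prop:Bc-accessibility} and Theorem~\ref{thm:structure}; your direct almost-everywhere argument for uniqueness is a harmless substitute for the uniqueness clause of the cited theorem.
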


\begin{proof}
The complement $\Omega$ is connected by
\cite{DemaillyCADG}*{Chapter~I, Corollary~5.26}.
The removable-singularity theorem
\cite{DemaillyCADG}*{Chapter~I, Theorem~5.24} gives a unique psh
extension $\widetilde\varphi$ to $G$, with
\[
 \widetilde\varphi(z)=\limsup_{\Omega\ni w\to z}\varphi(w)
 \qquad(z\in E).
\]
This formula preserves the global upper and lower bounds of
$\varphi$.

For $a\in\{b,\mathrm{sm}\}$, suppose
$0\in\mathscr R_a(G,\widetilde\varphi)$ and choose a realizing
sequence $v_j$ in that class. Its restrictions $u_j=v_j|_\Omega$
remain in the same class and converge to $\varphi$ locally in
$L^1$. Bedford--Taylor locality gives
$\MA(u_j)=\MA(v_j)|_\Omega$. For $g\in C_c(\Omega)$, its zero
extension $\widetilde g$ belongs to $C_c(G)$, since its support
is compact in $\Omega$. Thus
\[
 \int_\Omega g\,d\MA(u_j)
 =\int_G\widetilde g\,d\MA(v_j)\longrightarrow0,
\]
which proves $0\in\mathscr R_a(\Omega,\varphi)$ in both classes.
If $G$ is hyperconvex, Propositions~\ref{prop:Bc-examples}
and~\ref{prop:Bc-accessibility} give
$0\in\mathscr R_{\mathrm{sm}}(G,\widetilde\varphi)$.
The final assertions now follow from Theorem~\ref{thm:structure}.
\end{proof}

Only the psh function needs to extend across $E$.  A target Radon
measure on $G\setminus E$ may have infinite mass in every neighborhood
of $E$ and need not extend to a Radon measure on $G$.  We therefore
apply the addition theorem directly on $G\setminus E$.

\begin{proof}[Proof of Theorem~\ref{thm:classification}]
In case \textup{(1)}, the constant sequence $b_j=\varphi$ satisfies
\eqref{eq:zero-approximation}.  If $\varphi$ is continuous,
Lemma~\ref{lem:continuous-approximation} also gives
$0\in\mathscr R_{\mathrm{sm}}(\Omega,\varphi)$.
In cases \textup{(2)--(3)}, Propositions~\ref{prop:Bc-examples}
and~\ref{prop:Bc-accessibility} provide bounded continuous approximants
whose measures tend to zero.  Case \textup{(4)} follows from
Corollary~\ref{cor:completion}.

In each case, Theorem~\ref{thm:structure} gives
$\mathscr R_b(\Omega,\varphi)=\cM_+(\Omega)$ and realizes exactly the
vaguely closed subsets as full cluster sets, including the empty set.
Under the stated smoothness assumptions the same argument uses
$\mathscr R_{\mathrm{sm}}$.  Finally, for each individual $\nu$, the
equality of reachable sets supplies an entire sequence with measure
limit $\nu$ by definition.  Alternatively, take the constant target
$\mu_k=\nu$ in Corollary~\ref{thm:moving-zero}.
\end{proof}

\subsection{Examples beyond hyperconvex domains}

The following examples distinguish the domain conditions and illustrate
the effect of biholomorphic changes of coordinates and pluripolar deletion.
Write
\[
 \mathbb B^n=\{z\in\C^n:|z|<1\}.
\]

\begin{lemma}[Biholomorphic invariance]\label{lem:biholomorphic}
Let $\Phi:\Omega\to\Omega'$ be a biholomorphism between domains.
Composition with $\Phi$ preserves boundedness, continuity, smoothness, and
local $L^1$ convergence of psh functions.  For
$u\in\PSH(\Omega')\cap L^\infty_{\loc}(\Omega')$ and
$\nu\in\cM_+(\Omega')$,
\[
 \MA(u\circ\Phi)=\Phi^*\MA(u),
 \qquad \Phi^*\nu:=(\Phi^{-1})_*\nu.
\]
Consequently, hyperconvexity, the sets of possible limits, and condition
\eqref{eq:zero-approximation} are biholomorphically invariant.
\end{lemma}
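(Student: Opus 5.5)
The proof reduces to one identity: $\MA(u\circ\Phi)=\Phi^*\MA(u)$. Everything else is bookkeeping for composition with a biholomorphism and its inverse.

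For the elementary preservation claims, composition with $\Phi$ keeps pluri\-subharmonicity, since psh is invariant under holomorphic maps. It also keeps global bounds (same range), continuity and smoothness. For local $L^1$ convergence, fix a compact $K\subset\Omega$. Then $\Phi(K)$ is compact, and by the change of variables formula
\[
\int_K|u_j\circ\Phi-u\circ\Phi|\,d\lambda_{2n}=\int_{\Phi(K)}|u_j-u|\,|\det J_{\mathbb R}\Phi^{-1}|\,d\lambda_{2n}.
\]
The real Jacobian of $\Phi^{-1}$ is continuous, hence bounded on $\Phi(K)$, so local $L^1$ convergence transfers. The same argument applies to $\Phi^{-1}$, which gives the converse directions.

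For the Monge--Amp\`ere identity, I would first treat smooth $u$. There $\ddc$ commutes with pullback of forms, so $(\ddc(u\circ\Phi))^n=\Phi^*((\ddc u)^n)$ as top-degree forms. Because $\Phi$ is holomorphic, it is orientation preserving, so pulling back a positive top form corresponds exactly to the measure $(\Phi^{-1})_*\MA(u)$: for $g\in C_c(\Omega)$,
\[
\int_\Omega g\,\Phi^*\omega=\int_{\Omega'}(g\circ\Phi^{-1})\,\omega .
\]
For general locally bounded psh $u$, I would work locally. On a relatively compact open set $V'\subset\subset\Omega'$, take standard regularizations $u_\epsilon\downarrow u$ that are smooth and psh on $V'$. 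Then $u_\epsilon\circ\Phi$ decreases to $u\circ\Phi$ on $\Phi^{-1}(V')$. Bedford--Taylor continuity along decreasing sequences \cite{BedfordTaylor1982} gives weak convergence of both sides. Testing against $g$ and $g\circ\Phi^{-1}$, which are both continuous with compact support, the identity passes to the limit; locality of Monge--Amp\`ere measures then lets the local pieces glue. This limiting step is the only non-formal point. What needs checking is that compactly supported continuous test functions correspond under $\Phi$, which holds because $\Phi$ is a homeomorphism, so $(\Phi^{-1})_*$ is a vague homeomorphism of $\cM_+(\Omega')$ onto $\cM_+(\Omega)$.

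For the consequences, hyperconvexity transfers by composing the exhaustion $\rho$ with $\Phi$, since sublevel sets stay relatively compact under a homeomorphism. For the sets of limits, a sequence $u_j\to\varphi$ on $\Omega'$ with $\MA(u_j)\to\nu$ maps to $u_j\circ\Phi\to\varphi\circ\Phi$ with $\MA(u_j\circ\Phi)\to\Phi^*\nu$, using vague continuity of $\Phi^*$. This gives
\[
\Phi^*\mathscr R_a(\Omega',\varphi)\subset\mathscr R_a(\Omega,\varphi\circ\Phi),
\]
and applying the same to $\Phi^{-1}$ yields equality. Full cluster sets correspond in the same way, and condition \eqref{eq:zero-approximation} is the special case $\nu=0$, since $\Phi^*0=0$. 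The one caveat is that boundedness of the domains is not needed for this lemma. Where the paper's earlier theorems require bounded domains, invariance is applied only between domains where those hypotheses hold, or only to these set-theoretic statements.
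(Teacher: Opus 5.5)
Your proposal is correct and takes essentially the same approach as the paper. Both use change of variables with the locally bounded real Jacobian of $\Phi^{-1}$, the smooth case via pullback of top-degree forms, then local psh regularization with Bedford--Taylor monotone continuity, and vague continuity of pull-back by the homeomorphism $\Phi$; you also spell out the consequences (hyperconvexity, reachable sets via $\Phi^{-1}$, full cluster sets, and the case $\nu=0$) in more detail than the paper's brief proof, and those details are sound.
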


\begin{proof}
Change of variables applies on compact sets because the real Jacobian of
$\Phi^{-1}$ is locally bounded.  The Monge--Amp\`ere identity follows first
for smooth potentials and then, locally on relatively compact coordinate
balls, from psh regularization and Bedford--Taylor monotone continuity.
Pull-back by the homeomorphism $\Phi$ preserves vague convergence.
\end{proof}

\begin{proposition}[$\mathcal B_c$-convexity is not necessary]
\label{prop:conditions-not-necessary}
Let $\Omega=\D^{n-1}\times\D^*$ ($n\geq2$).  Then
$0\in\mathscr R_{\mathrm{sm}}(\Omega,\varphi)$ holds for every
$\varphi\in\PSH(\Omega)\cap L^\infty(\Omega)$, although $\Omega$ is not
$\mathcal B_c$-convex and not hyperconvex.
\end{proposition}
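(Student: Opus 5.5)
The claim splits into three parts: zero reachability, failure of $\mathcal B_c$-convexity, and failure of hyperconvexity. The failure of $\mathcal B_c$-convexity is exactly Example~\ref{ex:pseudoconvex-not-Bc}. Non-hyperconvexity then follows from Proposition~\ref{prop:Bc-examples}, since every bounded hyperconvex domain is $\mathcal B_c$-convex. One can also argue directly: a negative continuous psh exhaustion $\rho$ would restrict to $w\mapsto\rho(0,w)$, which is bounded and subharmonic near $w=0$. It therefore extends across the puncture and stays bounded away from $0$ near it, contradicting $\rho(0,w)\to0$ as $w\to0$.

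For zero reachability I will use Corollary~\ref{cor:completion}. Take $G=\D^n$ and $E=\D^{n-1}\times\{0\}$, which is relatively closed in $G$ and pluripolar, being the zero set of the holomorphic function $z_n$. Then $\Omega=G\setminus E$ exactly, and $G$ is bounded and hyperconvex: $\rho(z)=\max_k\log|z_k|$ is a continuous negative psh exhaustion. The corollary then states directly that every bounded psh $\varphi$ on $\Omega$ satisfies $\mathscr R_{\mathrm{sm}}(\Omega,\varphi)=\cM_+(\Omega)$, and in particular $0\in\mathscr R_{\mathrm{sm}}(\Omega,\varphi)$.

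To make the argument self-contained I will spell out the chain inside that corollary. The bounded $\varphi$ extends uniquely to a bounded psh $\widetilde\varphi$ on $\D^n$. Proposition~\ref{prop:Bc-accessibility}, applied on the $\mathcal B_c$-convex polydisc, gives continuous bounded psh $b_j\to\widetilde\varphi$ in $L^1_{\loc}(\D^n)$ with $\MA(b_j)\to0$. Lemma~\ref{lem:continuous-approximation} upgrades these to smooth approximants. Restricting to $\Omega$ preserves boundedness, smoothness and local $L^1$ convergence, and by Bedford--Taylor locality the restricted measures still tend to zero vaguely against $C_c(\Omega)$ test functions.

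There is no real obstacle. The only points to check are that $E$ is relatively closed and pluripolar and that $\Omega$ is connected, which is clear for a product of connected sets.
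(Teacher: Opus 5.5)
Your proposal is correct and follows the paper's proof. The paper likewise applies Corollary~\ref{cor:completion} with $G=\D^n$ and $E=\D^{n-1}\times\{0\}$, cites Example~\ref{ex:pseudoconvex-not-Bc} for the failure of $\mathcal B_c$-convexity, and uses Proposition~\ref{prop:Bc-examples} to rule out hyperconvexity. One minor point: $\max_k\log|z_k|$ equals $-\infty$ at the origin, so take $\max\{\max_k\log|z_k|,-1\}$ to get a real-valued continuous exhaustion; your direct non-hyperconvexity argument is a valid optional extra.
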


\begin{proof}
Take $G=\D^n$ and $E=\D^{n-1}\times\{0\}$.  Since $G$ is hyperconvex,
Corollary~\ref{cor:completion} gives $0\in\mathscr R_{\mathrm{sm}}(\Omega,\varphi)$.
Example~\ref{ex:pseudoconvex-not-Bc} shows that $\Omega$ is not
$\mathcal B_c$-convex. Since every bounded hyperconvex domain is $\mathcal B_c$-convex by
Proposition~\ref{prop:Bc-examples}, this domain
is not hyperconvex.
\end{proof}

The punctured ball $\mathbb B^n\setminus\{0\}$, although not pseudoconvex,
is another instance of Corollary~\ref{cor:completion}.  In dimension two,
\[
 (z_1,z_2)\longmapsto(z_1/z_2,z_2)
\]
is a biholomorphism from the Hartogs triangle
$\{(z_1,z_2):|z_1|<|z_2|<1\}$ onto $\D\times\D^*$.  By
Lemma~\ref{lem:biholomorphic}, the Hartogs triangle is a bounded
pseudoconvex, nonhyperconvex domain for which every nonnegative Radon
measure occurs as a limit and every vaguely closed subset of
$\cM_+(\Omega)$ is realized as the full cluster set of a suitable sequence.

The bounded-domain construction applies locally on any pseudoconvex
domain.  Extending it to the whole domain preserves local bounds;
retaining global bounds requires suitable bounded pasting functions.

\section{General domains: localization and obstructions}\label{sec:localization}

A psh exhaustion allows a function constructed near a compact set to
be pasted to a function on the whole domain.  This gives locally
bounded approximants on arbitrary pseudoconvex domains.  Condition
\textup{(V)} below supplies bounded pasting functions and hence
globally bounded approximants.

\subsection{Locally bounded approximants}
Fix the test functions, exhaustion, and metric of
Lemma~\ref{lem:vague-metrization} for the domain under consideration.
The next lemma preserves a prescribed psh function near the supports
of the test functions, where Bedford--Taylor locality preserves its
measure.

\begin{lemma}[Extension by psh pasting]\label{lem:germ-localization}
Let $\Omega\subset\C^n$ be a domain, let $K\subset\subset\Omega$, and suppose that
$\rho\in\PSH(\Omega)\cap L^\infty_{\loc}(\Omega)$ and $d<c$ satisfy
\[
 K\subset\{\rho<d\},\qquad
 \overline{\{\rho<c\}}\subset G\subset\subset\Omega,
\]
where $G$ is a bounded domain.  If
$w\in\PSH(G)\cap L^\infty(G)$, then there is
$W\in\PSH(\Omega)\cap L^\infty_{\loc}(\Omega)$ that equals $w$ on
$\{\rho<d\}$.  If $\rho$ is globally bounded, then $W$ is globally bounded.
If both $\rho$ and $w$ are continuous, then $W$ can be chosen continuous.
\end{lemma}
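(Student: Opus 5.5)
The plan is the standard gluing $W=\max\{w,\,A\rho+B\}$ on a region around $\{\rho<d\}$, and $W=A\rho+B$ outside it. Since $w$ is bounded on $G$, set $M=\sup_G|w|$. We want a function $\ell=A\rho+B$ with $A>0$ such that
\[
 \ell<-M\le w\ \text{ on }\{\rho<d\},\qquad \ell>M\ge w\ \text{ on }\{\rho\geq c'\}\cap G
\]
for some intermediate $c'$ with $d<c'<c$. Take $A=4M/(c-d)+1$ and choose $B$ so that $A\frac{d+c}{2}+B=0$. Then $\ell\le A(d-\frac{d+c}{2})<-M$ on $\{\rho<d\}$. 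Likewise $\ell\ge A(c'-\frac{d+c}{2})>M$ on $\{\rho\ge c'\}$, provided $c'$ is close to $c$, say $c'=(d+3c)/4$, which gives $A(c-d)/4>M$. We then define
\[
 W=\begin{cases}\max\{w,\ell\},& \text{on }\{\rho<c\},\\ \ell,&\text{on }\Omega\setminus\{\rho<c'\}.\end{cases}
\]
This is well defined on the overlap $\{c'\le\rho<c\}\subset G$, where $\ell>w$.

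The key point, and the expected main obstacle, is that the two definitions must agree on an \emph{open} set covering the transition, so that plurisubharmonicity is local. Since $\rho$ is only upper semicontinuous, $\{\rho<c\}$ is open, but $\{\rho\ge c'\}$ is not. So the second piece must be psh on an open set, and the two open sets must cover $\Omega$.

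We use the open sets $U_1=\{\rho<c\}$ and $U_2=\Omega\setminus\overline{\{\rho<c'\}}$. The union $U_1\cup U_2$ is $\Omega$, because $\overline{\{\rho<c'\}}\subset\overline{\{\rho<c\}}$, and where $\rho\ge c$ we lie outside $\overline{\{\rho<c'\}}$ unless we are on its closure. The trouble is that points of $\overline{\{\rho<c'\}}$ with $\rho\ge c$ could occur, since $\rho$ is only usc and boundary points of a sublevel set may have larger values. To handle this we use the hypothesis $\overline{\{\rho<c\}}\subset G$ and define $W$ piecewise on $G$ and on $\Omega\setminus\overline{\{\rho<c'\}}$ instead. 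On $G$ set $W=\max\{w,\ell\}$ wherever $\rho<c$, and $W=\ell$ elsewhere in $G$.

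This requires $\ell\ge w$ on $G\cap\{\rho\ge c'\}$, which holds by the choice of constants since $|w|\le M$ on all of $G$. Hence $W=\max\{w,\ell\}$ on all of $G\cap\{\rho<c\}$, and $W=\ell$ on $G\cap\{\rho\ge c'\}$. Then $W|_G=\max\{w,\ell\}$ wherever $w$ is defined, which is all of $G$ (we cap $w$ against $\ell$ everywhere on $G$). So $W|_G=\max\{w,\ell\}$ is psh on $G$. Outside, $W=\ell$ on $\Omega\setminus\overline{\{\rho<c'\}}$, which is open. On the overlap $G\setminus\overline{\{\rho<c'\}}\subset\{\rho\ge c'\}$, where $\ell\ge w$, the two definitions agree. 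The cover works because $\overline{\{\rho<c'\}}\subset\overline{\{\rho<c\}}\subset G$. Plurisubharmonicity is local, so $W$ is psh on $\Omega$.

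It remains to check the listed properties.
\begin{itemize}
 \item On $\{\rho<d\}$ we have $\ell<-M\le w$, so $W=w$ there.
 \item $W$ is locally bounded: $\ell$ is locally bounded and $w$ is bounded.
 \item If $\rho$ is globally bounded, then so is $\ell$, and hence $W$.
 \item If $\rho$ and $w$ are continuous, each piece is continuous, so $W$ is continuous.
\end{itemize}
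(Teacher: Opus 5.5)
Your proof is correct and uses the same pasting function as the paper: $\max\{w,A\rho+B\}$ on the inner region and $A\rho+B$ outside. The constants are chosen so that $A\rho+B<\inf_G w$ on $\{\rho<d\}$ and $A\rho+B>\sup_G w$ near the seam. The difference lies only in how you verify plurisubharmonicity across the seam.

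\textbf{The paper's route.} It glues exactly along $U=\{\rho<c\}$ and checks two things by hand at points $\xi\in\partial U$. Since $U$ is open, $\rho(\xi)\geq c$, so $h(\xi)>\sup_G w+1$. This gives upper semicontinuity of $W$ at $\xi$. The submean inequality at $\xi$ follows because $W\geq h$ everywhere with equality at $\xi$. In the continuous case, continuity needs a separate neighbourhood argument near $\partial U$.

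\textbf{Your route.} You introduce the intermediate level $c'$ and observe that $\max\{w,\ell\}$ already equals $\ell$ on $G\cap\{\rho\geq c'\}$. You then glue over the genuine open cover $\{G,\ \Omega\setminus\overline{\{\rho<c'\}}\}$. Plurisubharmonicity and continuity then follow from locality alone, with no boundary computation.

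\textbf{What each buys.} Your argument is cleaner. It does, however, use essentially that $w$ is defined and bounded on all of $G$ and that $\overline{\{\rho<c\}}\subset G$. The paper's argument only needs $w$ to be bounded on $\{\rho<c\}$.

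You correctly diagnosed that your first cover, $\{\rho<c\}\cup(\Omega\setminus\overline{\{\rho<c'\}})$, can miss points when $\rho$ is discontinuous. In a final write-up, delete the sentence asserting that this union is $\Omega$, and keep only the cover by $G$ and $\Omega\setminus\overline{\{\rho<c'\}}$.
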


\begin{proof}
Put $U=\{\rho<c\}$, $m_w=\inf_Gw$, and $M_w=\sup_Gw$.  Choose $A>0$ so that
\[
 A(c-d)>M_w-m_w+2,
\]
and set $B=m_w-1-Ad$ and $h=A\rho+B$.  On $\{\rho<d\}$ we have
$h<m_w-1\leq w-1$.  If $\xi\in\partial U$, upper semicontinuity gives
$\rho(\xi)\geq c$, and therefore
\[
 h(\xi)\geq m_w-1+A(c-d)>M_w+1.
\]
At every $\xi\in\partial U$,
\[
 \limsup_{U\ni z\to\xi}\max\{w(z),h(z)\}
 \leq\max\{M_w,h(\xi)\}=h(\xi).
\]
Consequently,
\begin{equation}\label{eq:germ-pasting}
 W=
 \begin{cases}
  \max\{w,h\},&\text{on }U,\\
  h,&\text{on }\Omega\setminus U
 \end{cases}
\end{equation}
is upper semicontinuous.  Away from $\partial U$ it is locally psh.
At a boundary point it equals $h$ and dominates $h$ everywhere, so it
satisfies the submean inequality on each complex line through that
point.  Thus $W$ is psh.  It is locally bounded and agrees with $w$ on
$\{\rho<d\}$.  If
$\rho$ is globally bounded, the two branches in \eqref{eq:germ-pasting}
have common global upper and lower bounds.  If $\rho$ and $w$ are continuous,
the strict boundary inequality also makes the pasting continuous.
\end{proof}

\begin{lemma}[Localization of the addition estimates]
\label{lem:local-addition}
Let $\Omega\subset\C^n$ ($n\geq2$) be a domain, and fix the test
functions, compact exhaustion, and metric of
Lemma~\ref{lem:vague-metrization}.
Suppose that for each $k$ there are data $\rho_k,d_k,c_k,G_k$ satisfying
the hypotheses of Lemma~\ref{lem:germ-localization} with $K=K_k$.
For any sequences
\[
 b_k\in\PSH(\Omega)\cap L^\infty_{\loc}(\Omega),\qquad
 \mu_k\in\cM_+(\Omega),
\]
there are locally bounded psh functions $u_k$ such that
\[
 \|u_k-b_k\|_{L^1(K_k)}\to0,\qquad
 d_{\mathrm v}\bigl(\MA(u_k),\MA(b_k)+\mu_k\bigr)\to0.
\]
If every $b_k$ and $\rho_k$ is continuous, the $u_k$ can be chosen
continuous.  If every $\rho_k$ is globally bounded, the $u_k$ can be
chosen globally bounded.  When both assumptions hold, these choices
can be made simultaneously.  Continuous $b_k$ and $\rho_k$ also allow
a smooth locally bounded choice of $u_k$.
\end{lemma}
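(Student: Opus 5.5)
For each fixed $k$ we work on the bounded domain $G_k$, carry out the bounded-domain construction there, and paste the result back into $\Omega$ with Lemma~\ref{lem:germ-localization}. Locality of the Bedford--Taylor product then transfers the measure estimates from $G_k$ to $\Omega$, because the support of each relevant test function lies inside the region where the pasted function agrees with the local one.

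Fix $k$. By \eqref{eq:exhaustion}, the supports of $f_1,\dots,f_k$ lie in $K_k^\circ$. Also $K_k\subset\{\rho_k<d_k\}$, and this set is open when $\rho_k$ is continuous; in general we use $K_k^\circ$ itself. The restriction $b_k|_{G_k}$ is locally bounded but need not be globally bounded on $G_k$. To fix this, choose an intermediate bounded open set $G_k'$ with $\overline{\{\rho_k<c_k\}}\subset G_k'\subset\subset G_k$; shrinking $G_k$ in this way, we may assume $b_k$ is bounded on $G_k$. The restricted measure $\mu_k|_{G_k}$ lies in $\cM_+(G_k)$. We then apply Proposition~\ref{prop:slices} and Proposition~\ref{prop:programming} on $G_k$, with $b=b_k|_{G_k}$, the test functions $f_1,\dots,f_k$ (restricted to $G_k$, where they have compact support), and $\eta=t=1/k$. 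Exactly as in the proof of Theorem~\ref{thm:tracking}, this gives a bounded psh function $w_k=b_k+h_k$ on $G_k$ with $h_k\le0$ continuous, $\|w_k-b_k\|_{L^1(G_k)}\le C_{G_k,1}/k$, and
\[
 \Bigl|\int f_j\,d\bigl(\MA(w_k)-\MA(b_k)-\mu_k\bigr)\Bigr|<2/k\qquad(j\le k).
\]
The constant $C_{G_k,1}$ depends on $k$. We therefore take $t=1/(k\,C_{G_k,1})$ instead, so that the $L^1$ error is at most $1/k$.

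Next, Lemma~\ref{lem:germ-localization} extends $w_k$ to $U_k\in\PSH(\Omega)\cap L^\infty_{\loc}$ with $U_k=w_k$ on $\{\rho_k<d_k\}\supset K_k$. Set $u_k=U_k$. Then $\|u_k-b_k\|_{L^1(K_k)}\le\|h_k\|_{L^1(G_k)}\le 1/k$. Since $u_k=w_k$ on a neighbourhood of $\bigcup_{j\le k}\supp f_j$ and $w_k$ agrees with the $G_k$-data, locality gives the same coordinate estimates on $\Omega$. For this we need an open set on which the two functions agree: if $\rho_k$ is only upper semicontinuous, $\{\rho_k<d_k\}$ is still open, because sublevel sets $\{\rho<d\}$ of an upper semicontinuous function are open. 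The tail estimate from the proof of Theorem~\ref{thm:tracking} then yields $d_{\mathrm v}\bigl(\MA(u_k),\MA(b_k)+\mu_k\bigr)\le 3/k+2^{-k}\to0$.

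For the variants, the boundedness and continuity assertions come directly from Lemma~\ref{lem:germ-localization}. If $b_k$ is continuous, then $w_k$ is continuous, and continuous $\rho_k$ gives a continuous $U_k$; a globally bounded $\rho_k$ gives a globally bounded $U_k$; and the two conditions do not interfere, so both hold together. For a smooth choice, apply Richberg approximation on $\Omega$ to the continuous $U_k$, as in Theorem~\ref{thm:tracking}. A uniform error $\delta_k$ preserves local boundedness, changes the $L^1(K_k)$ error by only $\delta_k\lambda(K_k)$, and by Bedford--Taylor continuity under uniform convergence changes the first $k$ coordinates by less than $1/k$.

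The main obstacle is the global boundedness of $b_k$ on $G_k$ that Proposition~\ref{prop:programming} needs. I would handle it by shrinking $G_k$ to a relatively compact bounded open neighbourhood of $\overline{\{\rho_k<c_k\}}$. Connectedness of this set is not used in Propositions~\ref{prop:slices} and~\ref{prop:programming} beyond boundedness and openness; if a domain is required, take a connected component, or note that the construction works componentwise.
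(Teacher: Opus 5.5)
Your proof is correct and follows the paper's route. You do bounded-domain addition on $G_k$, unpacking it through Propositions~\ref{prop:slices} and~\ref{prop:programming} with a rescaled $t$ where the paper picks one term of the sequence from Theorem~\ref{thm:fixed-addition}; you then paste via Lemma~\ref{lem:germ-localization}, use locality on the open set $\{\rho_k<d_k\}$ and the metric tail, and finish with Richberg for the smooth variant. Your ``main obstacle'' does not exist: $G_k\subset\subset\Omega$ and local boundedness already make $b_k$ bounded on $G_k$, so the shrinking and connected-component discussion is unnecessary, and the Richberg error $\delta(|z|^2+1)$ is only locally uniform on unbounded $\Omega$, which is all you actually use.
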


\begin{proof}
The restriction $b_k|_{G_k}$ is bounded because $G_k\subset\subset\Omega$.
Apply Theorem~\ref{thm:fixed-addition} on $G_k$ and choose a bounded
psh function $w_k$ with
\[
 \|w_k-b_k\|_{L^1(K_k)}<1/k,\qquad
 \left|\int f_j\,d\bigl(\MA(w_k)-\MA(b_k)-\mu_k\bigr)\right|<1/k
 \quad(1\leq j\leq k).
\]
The measures here are restricted to $G_k$.
Lemma~\ref{lem:germ-localization} gives a locally bounded psh function
$W_k$ on $\Omega$ agreeing with $w_k$ on a neighborhood of $K_k$.
Bedford--Taylor locality preserves the estimates.  The first $k$
terms of \eqref{eq:vague-metric}, followed by its tail, give
\[
 d_{\mathrm v}\bigl(\MA(W_k),\MA(b_k)+\mu_k\bigr)
 \leq1/k+2^{-k}.
\]
The pasting preserves global boundedness when $\rho_k$ is globally
bounded.  If $b_k$ and $\rho_k$ are continuous, choose $w_k$ smooth in
Theorem~\ref{thm:fixed-addition}; then $W_k$ is continuous.

For the smooth choice, Richberg approximation gives smooth psh functions
$v_{k,\delta}$ with
\[
 W_k+\delta|z|^2\leq v_{k,\delta}
 \leq W_k+\delta(|z|^2+1).
\]
They converge locally uniformly to $W_k$ as $\delta\downarrow0$.
Bedford--Taylor continuity permits choosing $\delta_k$ so that the
$L^1(K_k)$ error and each of the first $k$ measure errors increase by
less than $1/k$.  Taking $u_k=v_{k,\delta_k}$ proves the smooth locally
bounded assertion, with metric error at most $2/k+2^{-k}$.
\end{proof}

\begin{proposition}[Approximation of a sum by locally bounded functions]
\label{prop:local-superposition}
Let $\Omega\subset\C^n$ ($n\geq2$) be a pseudoconvex domain.  Given
\[
 b_k\in\PSH(\Omega)\cap L^\infty_{\loc}(\Omega),
 \qquad \mu_k\in\cM_+(\Omega),
\]
there are $u_k\in\PSH(\Omega)\cap L^\infty_{\loc}(\Omega)$ such that
\[
 \|u_k-b_k\|_{L^1(K_k)}\longrightarrow0,\qquad
 d_{\mathrm v}\bigl(\MA(u_k),\MA(b_k)+\mu_k\bigr)\longrightarrow0.
\]
If every $b_k$ is continuous, every $u_k$ can be chosen smooth.
\end{proposition}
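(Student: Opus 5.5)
The plan is to reduce the statement to Lemma~\ref{lem:local-addition}. That lemma already produces the required $u_k$ once, for each $k$, we have localization data $\rho_k,d_k,c_k,G_k$ satisfying the hypotheses of Lemma~\ref{lem:germ-localization} with $K=K_k$. So the work is to build these data from pseudoconvexity. The smooth assertion for continuous $b_k$ then comes from the last sentence of Lemma~\ref{lem:local-addition}, provided the $\rho_k$ are continuous. No global boundedness is claimed, so $\rho_k$ need not be bounded.

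First, since $\Omega$ is pseudoconvex, I take a continuous (even smooth) psh exhaustion function $\rho$ on $\Omega$, for instance from \cite{DemaillyCADG}*{Chapter~I, Theorem~7.2}. I will use the same function $\rho_k=\rho$ for every $k$. For each $k$, the set $K_k$ is compact and $\rho$ is continuous, so $d_k:=\max_{K_k}\rho+1$ is finite and $K_k\subset\{\rho<d_k\}$. Put $c_k=d_k+1$. The exhaustion property makes $\{\rho\le c_k\}$ compact in $\Omega$, so $\overline{\{\rho<c_k\}}\subset\{\rho\le c_k\}$ is compact.

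Second, I need a bounded domain $G_k$ with $\overline{\{\rho<c_k\}}\subset G_k\subset\subset\Omega$. The sublevel set $\{\rho<c_k+1\}$ is open and relatively compact, hence bounded, but it may be disconnected. Since $\Omega$ is connected, I cover the compact set $\{\rho\le c_k\}$ by finitely many small balls in $\Omega$. I then join these balls to one another by finitely many paths in $\Omega$ and thicken the paths into tubes with compact closure in $\Omega$. The union is a connected, bounded, relatively compact open set containing $\overline{\{\rho<c_k\}}$, and I take this union as $G_k$.

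A subtlety is that Lemma~\ref{lem:local-addition} applies Theorem~\ref{thm:fixed-addition} on $G_k$, which needs $b_k$ bounded there. This holds because $b_k$ is locally bounded and $G_k$ is relatively compact. The measures $\mu_k$ restrict to Radon measures on $G_k$, and Lemma~\ref{lem:local-addition} is stated for arbitrary $\Omega$, so with these data it gives the conclusion directly. The main obstacle I expect is only this domain-connectivity bookkeeping, namely ensuring that $G_k$ is a connected bounded domain so that Theorem~\ref{thm:fixed-addition} applies. Everything else is a direct citation.
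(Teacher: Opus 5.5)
Your proposal is correct and follows the paper's proof: take a smooth psh exhaustion $\tau$ of the pseudoconvex domain (via \cite{DemaillyCADG}*{Chapter~I, Theorem~7.2}), choose $d_k<c_k$ and a bounded domain $G_k$ with $K_k\subset\{\tau<d_k\}$ and $\overline{\{\tau<c_k\}}\subset G_k\subset\subset\Omega$, and apply Lemma~\ref{lem:local-addition} with $\rho_k=\tau$, whose smooth assertion covers continuous $b_k$. Your explicit construction of a connected $G_k$ from finitely many balls joined by thickened paths supplies a step the paper leaves implicit, and it is sound.
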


\begin{proof}
A pseudoconvex domain admits a smooth strictly psh exhaustion $\tau$
\cite{DemaillyCADG}*{Chapter~I, Theorem~7.2}.  For each $k$, choose
$d_k<c_k$ and a bounded domain $G_k\subset\subset\Omega$ such that
\[
 K_k\subset\{\tau<d_k\},\qquad
 \overline{\{\tau<c_k\}}\subset G_k\subset\subset\Omega.
\]
Apply Lemma~\ref{lem:local-addition} with $\rho_k=\tau$.
Its smooth assertion applies whenever the $b_k$ are continuous.
\end{proof}

For a locally bounded psh function $\varphi$, let
$\mathscr R_{\loc}(\Omega,\varphi)$ be defined as $\mathscr R_b$, with
locally bounded approximants in place of globally bounded ones.
Lelong's maximal approximants belong to this class and satisfy
$\MA(b_k)=0$.  The preceding proposition therefore gives the following
realization result.

\begin{corollary}[Prescribed measures with locally bounded approximants]
\label{cor:local-universality}
Let $\Omega\subset\C^n$ ($n\geq2$) be a pseudoconvex domain, and let
$\varphi\in\PSH(\Omega)\cap L^\infty_{\loc}(\Omega)$.  For every sequence
$(\mu_k)\subset\cM_+(\Omega)$ there are smooth locally bounded psh
functions $u_k$ on $\Omega$ such that
\[
 u_k\longrightarrow\varphi\quad\text{in }L^1_{\loc},\qquad
 d_{\mathrm v}(\MA(u_k),\mu_k)\longrightarrow0.
\]
Taking $\mu_k=\mu$ for any fixed $\mu\in\cM_+(\Omega)$ gives
$\mathscr R_{\loc}(\Omega,\varphi)=\cM_+(\Omega)$.  Global boundedness is not asserted.
\end{corollary}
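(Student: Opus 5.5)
The plan is to combine Lelong's maximal approximants with Proposition~\ref{prop:local-superposition}, exactly as the sentence preceding the corollary suggests. Since $\Omega$ is pseudoconvex and $\varphi$ is locally bounded psh, Lelong's approximation theorem \cite{Lelong} (as restated in \cite{BacklundPersson}*{Theorem~2.1}) supplies continuous maximal psh functions $b_k$ on all of $\Omega$ with $\|b_k-\varphi\|_{L^1(K_k)}<1/k$. Each $b_k$ is continuous, hence locally bounded, and maximality gives $\MA(b_k)=0$ on $\Omega$ by \cite{BacklundPersson}*{Theorem~2.3}. No global bound is needed, which is why only pseudoconvexity enters.

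Next I apply Proposition~\ref{prop:local-superposition} to the sequences $b_k$ and $\mu_k$. Because every $b_k$ is continuous, its smooth assertion yields smooth locally bounded psh functions $u_k$ with
\[
 \|u_k-b_k\|_{L^1(K_k)}\to0,\qquad
 d_{\mathrm v}\bigl(\MA(u_k),\MA(b_k)+\mu_k\bigr)=d_{\mathrm v}(\MA(u_k),\mu_k)\to0 .
\]
For the potentials, fix a compact $K\subset\Omega$. Then $K\subset K_k$ for all large $k$ by \eqref{eq:exhaustion}, and the triangle inequality gives $\|u_k-\varphi\|_{L^1(K)}\leq\|u_k-b_k\|_{L^1(K_k)}+\|b_k-\varphi\|_{L^1(K_k)}\to0$. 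Hence $u_k\to\varphi$ in $L^1_{\loc}(\Omega)$.

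For the final assertion, take $\mu_k=\mu$. Then $d_{\mathrm v}(\MA(u_k),\mu)\to0$, and since $d_{\mathrm v}$ induces the vague topology by Lemma~\ref{lem:vague-metrization}, this means $\MA(u_k)\to\mu$ vaguely. Thus $\mu\in\mathscr R_{\loc}(\Omega,\varphi)$ for every $\mu$, which gives $\mathscr R_{\loc}(\Omega,\varphi)=\cM_+(\Omega)$; the reverse inclusion holds by definition.

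I expect no real obstacle, since all the analytic work sits in Proposition~\ref{prop:local-superposition}. The only points to check are:
\begin{itemize}
\item Lelong's functions are defined on the whole of $\Omega$, not just near compacts, so that they are admissible inputs $b_k\in\PSH(\Omega)\cap L^\infty_{\loc}(\Omega)$ to that proposition.
\item The exhaustion $K_k$ used in both steps is the fixed one from Lemma~\ref{lem:vague-metrization}.
\end{itemize}
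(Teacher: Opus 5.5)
Your proposal is correct and follows the paper's own proof: Lelong's continuous maximal approximants $b_k$ with $\MA(b_k)=0$ are fed, together with $\mu_k$, into the smooth part of Proposition~\ref{prop:local-superposition}. You also write out the $L^1_{\loc}$ triangle-inequality step and the passage from $d_{\mathrm v}$-convergence to vague convergence, both of which the paper leaves implicit.
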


\begin{proof}
Lelong's density theorem gives continuous maximal psh functions $b_k$ on
$\Omega$ converging to $\varphi$ locally in $L^1$; see \cite{Lelong} and
\cite{BacklundPersson}*{Theorem~2.1, p.~263}.  By
\cite{BacklundPersson}*{Theorem~2.3}, $\MA(b_k)=0$.  Apply the smooth part of
Proposition~\ref{prop:local-superposition} with target $\mu_k$.
\end{proof}

\subsection{Globally bounded approximants under a localization condition}

To retain global bounds, we need a bounded psh function for pasting in
place of the unbounded exhaustion.  We use the following condition:
\begin{equation*}\label{eq:condition-V}
 \tag{V}
 \forall K\subset\subset\Omega\quad
 \exists\,\rho_K\in\PSH(\Omega),\ \rho_K<0,\ \exists\,\alpha_K<0:
 \qquad K\subset\{\rho_K<\alpha_K\}\subset\subset\Omega.
\end{equation*}
Truncating $\rho_K$ from below preserves the indicated sublevel set and
makes the pasting function globally bounded.  The condition also supplies
pseudoconvexity, without a continuity assumption on $\rho_K$.

\begin{proposition}[Pseudoconvexity under \textup{(V)}]
\label{prop:V-pseudoconvex}
Every domain $\Omega\subset\C^n$ satisfying \textup{(V)} is pseudoconvex.
\end{proposition}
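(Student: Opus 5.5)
We will build a continuous psh exhaustion of $\Omega$ out of the functions supplied by \textup{(V)}, in the spirit of the proof of Lemma~\ref{lem:Bc-pseudoconvex}, and then invoke the psh exhaustion criterion \cite{DemaillyCADG}*{Chapter~I, Theorem~7.2(c)}. Since the $\rho_K$ in \textup{(V)} are not assumed continuous, we cannot directly take maxima of them with constants and expect continuity. We therefore plan to use an equivalent characterization that tolerates upper semicontinuous data: $\Omega$ is pseudoconvex if and only if $-\log\delta_\Omega$ is psh. Equivalently, by the Kontinuit\"atssatz, $\Omega$ is pseudoconvex if for every compact $K\subset\subset\Omega$ the psh hull
\[
\widehat K_{\PSH}=\{z\in\Omega: v(z)\le\sup_K v\ \text{for all } v\in\PSH(\Omega)\}
\]
is relatively compact in $\Omega$. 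This is \cite{DemaillyCADG}*{Chapter~I, Theorem~7.2}, whose list of equivalent conditions includes psh convexity with arbitrary psh functions.

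The key step is to show that \textup{(V)} gives compact psh hulls. Fix $K\subset\subset\Omega$ and take $\rho_K<0$, $\alpha_K<0$ from \textup{(V)}. Since $\rho_K$ is upper semicontinuous and $K\subset\{\rho_K<\alpha_K\}$, compactness gives $s:=\sup_K\rho_K<\alpha_K$; indeed $\rho_K$ attains its maximum on $K$. Then
\[
\widehat K_{\PSH}\subset\{\rho_K\le s\}\subset\{\rho_K<\alpha_K\}\subset\subset\Omega .
\]
The hull is also relatively closed in $\Omega$, since each sublevel set $\{v\le c\}$ of an upper semicontinuous function is closed. Hence it is compact.

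It remains to pass from compact hulls to pseudoconvexity. The main obstacle is ensuring that the cited criterion really allows arbitrary, possibly discontinuous, psh functions in the hull. If we prefer to stay within the exhaustion criterion already used in Lemma~\ref{lem:Bc-pseudoconvex}, we would regularize as follows. For a compact exhaustion $(K_j)$, the Kontinuit\"atssatz argument suffices: for a family of closed analytic discs $\overline{\Delta_t}$ in $\Omega$ with boundaries in a fixed compact $K$, the maximum principle applied to each $v\in\PSH(\Omega)$ on the discs places every disc inside $\widehat K_{\PSH}$, a fixed compact set. Hence the limit disc cannot reach $\partial\Omega$, and this disc property is equivalent to pseudoconvexity. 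We would present this disc argument explicitly, since it uses only the submean property of $\rho_K$ along holomorphic discs, and so needs no continuity.
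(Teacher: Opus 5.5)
Your argument is correct in substance, but it takes a different route from the paper. The paper stays with the exhaustion criterion. For a compact exhaustion $K_j$ it truncates $\rho_j$ from below and sets $m_j=\max_{K_j}\rho_j<\alpha_j$ and $\psi_j=\max\{(\rho_j-m_j)/(\alpha_j-m_j),0\}$. It then checks that the locally finite sum $\tau=\sum_j\psi_j$ is a psh exhaustion. The criterion it cites (Demailly, Chapter~I, Theorem~7.2(c)) accepts exhaustions that are merely upper semicontinuous, so the lack of continuity you worried about is no obstruction on that route.

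You instead use each $\rho_K$ separately. The same observation $\sup_K\rho_K<\alpha_K$, combined with the maximum principle along holomorphic discs, gives relative compactness of the psh hull, equivalently the Kontinuit\"atssatz property. The maximum principle step needs only subharmonicity of $\rho_K\circ f$, not continuity. Your route avoids the infinite sum and the local-finiteness check, at the cost of invoking a different characterization of pseudoconvexity. For the hull formulation with arbitrary, possibly discontinuous, psh functions, a safe reference is H\"ormander's Theorem~2.6.7(iii), whose class $P(\Omega)$ consists of all psh functions; the disc property is itself one of the standard equivalent conditions. The paper's version has the advantage of producing an explicit exhaustion and running parallel to Lemma~\ref{lem:Bc-pseudoconvex}.

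One step needs correcting. Sublevel sets $\{v\le c\}$ of an upper semicontinuous function need not be closed: it is $\{v<c\}$ that is open and $\{v\geq c\}$ that is closed. For instance, $v(z)=\sum_k2^{-k}\log|z_1-1/k|$ is psh on $\C^n$, finite at the origin, and equal to $-\infty$ at the points $(1/k,0,\dots,0)$. So $\{v\le c\}$ is not closed for $c<v(0)$, and your ``hence compact'' is not justified as written. It is also not needed: $\widehat K_{\PSH}\subset\{\rho_K<\alpha_K\}\subset\subset\Omega$ already gives relative compactness, which is exactly what the hull criterion and the disc argument require.

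You should also drop the opening announcement of a continuous exhaustion and the word ``regularize'' in your last paragraph, since neither is carried out. The disc argument is already a complete proof: apply \textup{(V)} directly to $K=\overline{\bigcup_t\partial\Delta_t}$ and use the single function $\rho_K$.
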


\begin{proof}
Choose a compact exhaustion $K_j\subset K_{j+1}^\circ$ and data
\[
 K_j\subset U_j:=\{\rho_j<\alpha_j\}\subset\subset\Omega
\]
from \textup{(V)}.  Replace $\rho_j$ by $\max\{\rho_j,-N_j\}$, with
$-N_j<\alpha_j$, to make it bounded without changing $U_j$.
Upper semicontinuity on $K_j$ gives
$m_j:=\max_{K_j}\rho_j<\alpha_j$.  The functions
\[
 \psi_j=\max\left\{\frac{\rho_j-m_j}{\alpha_j-m_j},0\right\}
\]
are nonnegative and psh, vanish on $K_j$, and are at least one on
$\Omega\setminus U_j$.
Every point has a neighborhood in some $K_J^\circ$.  On this
neighborhood, $\psi_j=0$ for all $j\geq J$, since $K_J\subset K_j$.
Thus $\tau=\sum_{j\geq1}\psi_j$ is a locally finite sum of finite-valued
psh functions, hence is finite-valued and psh.
For every positive integer $N$,
\[
 \{\tau<N\}\subset\bigcup_{j=1}^N U_j\subset\subset\Omega:
\]
outside this union, the first $N$ summands are all at least one.
Hence every sublevel of $\tau$ is relatively compact, and these
sublevels exhaust $\Omega$ because $\tau$ is finite everywhere.
The psh exhaustion criterion
\cite{DemaillyCADG}*{Chapter~I, Theorem~7.2(c)} now gives
pseudoconvexity; that criterion does not require the exhaustion to be
continuous.
\end{proof}

Since domains satisfying \textup{(V)} are Stein, Aytuna's theorem
identifies \textup{(V)} with Vogt's $\widetilde\Omega$ property for
$\mathcal O(\Omega)$ \cite{Aytuna2013}*{Theorem~1}.

\begin{proposition}[Approximation by globally bounded functions under \textup{(V)}]
\label{prop:V-superposition}
Let $\Omega\subset\C^n$ ($n\geq2$) be a domain satisfying condition~\textup{(V)}.
For arbitrary
\[
 b_k\in\PSH(\Omega)\cap L^\infty(\Omega),
 \qquad \mu_k\in\cM_+(\Omega),
\]
there are $u_k\in\PSH(\Omega)\cap L^\infty(\Omega)$ such that
\[
 \|u_k-b_k\|_{L^1(K_k)}\longrightarrow0,\qquad
 d_{\mathrm v}\bigl(\MA(u_k),\MA(b_k)+\mu_k\bigr)\longrightarrow0.
\]
The domain need not be bounded; its pseudoconvexity follows from
Proposition~\ref{prop:V-pseudoconvex}.
\end{proposition}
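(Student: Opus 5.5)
The plan is to reduce to Lemma~\ref{lem:local-addition}, whose hypotheses ask, for each $k$, for data $\rho_k,d_k,c_k,G_k$ satisfying the hypotheses of Lemma~\ref{lem:germ-localization} with $K=K_k$ and with $\rho_k$ globally bounded. Once these data exist, the global-boundedness clause of Lemma~\ref{lem:local-addition} gives globally bounded psh $u_k$ with $\|u_k-b_k\|_{L^1(K_k)}\to0$ and $d_{\mathrm v}(\MA(u_k),\MA(b_k)+\mu_k)\to0$. The hypothesis $b_k\in L^\infty(\Omega)$ is stronger than required there: that lemma only uses boundedness of $b_k$ on $G_k$. Pseudoconvexity is not needed in the argument; it is recorded by Proposition~\ref{prop:V-pseudoconvex}.

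To construct the data, I fix $k$ and apply \textup{(V)} not to $K_k$ but to $K_{k+1}$. Since $K_k\subset K_{k+1}^\circ$, this gives a slightly larger compact set, which leaves room for intermediate levels. Condition \textup{(V)} yields $\rho<0$ psh and $\alpha<0$ with $K_{k+1}\subset\{\rho<\alpha\}\subset\subset\Omega$. By upper semicontinuity, $m:=\max_{K_k}\rho<\alpha$. I choose $d_k$ with $m<d_k<\alpha$ and $c_k$ with $d_k<c_k<\alpha$, and set $\rho_k:=\max\{\rho,-N\}$ with $-N<d_k$. Then $\rho_k$ is psh and globally bounded, since $-N\le\rho_k<0$, and its sublevel sets below level $\alpha$ coincide with those of $\rho$. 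Hence $K_k\subset\{\rho_k<d_k\}$ and
\[
 \overline{\{\rho_k<c_k\}}\subset\{\rho_k\le c_k\}\cap\overline{\{\rho<\alpha\}}.
\]
This set is compact in $\Omega$, because $\{\rho<\alpha\}\subset\subset\Omega$.

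It remains to find a bounded domain $G_k$ with $\overline{\{\rho_k<c_k\}}\subset G_k\subset\subset\Omega$; this is the step needing care, since $\Omega$ itself may be unbounded and Lemma~\ref{lem:germ-localization} asks for a bounded domain. Put $L:=\overline{\{\rho<\alpha\}}$, which is a compact subset of $\Omega$. I cover $L$ by finitely many balls relatively compact in $\Omega$ and take their union. To get connectedness, I join the finitely many components by finitely many paths in the connected open set $\Omega$ and thicken these paths slightly. The resulting set $G_k$ is a bounded domain with $L\subset G_k\subset\subset\Omega$.

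The only other point to check is that Lemma~\ref{lem:germ-localization} and Lemma~\ref{lem:local-addition} never use boundedness of $\Omega$. They apply Theorem~\ref{thm:fixed-addition} only on the bounded domain $G_k$, and the metric of Lemma~\ref{lem:vague-metrization} is available on every domain. With this checked, applying Lemma~\ref{lem:local-addition} to $b_k,\mu_k$ and these data finishes the proof.
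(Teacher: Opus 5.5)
Your proposal is correct and is essentially the paper's proof. You take the function from \textup{(V)}, truncate it from below, choose levels $\max_{K_k}\rho_k<d_k<c_k<\alpha_k$ and a bounded domain $G_k\subset\subset\Omega$ containing $\overline{\{\rho_k<c_k\}}$, and apply Lemma~\ref{lem:local-addition}; applying \textup{(V)} to $K_{k+1}$ instead of $K_k$ is harmless but unnecessary, since upper semicontinuity already gives $\max_{K_k}\rho<\alpha$. One small slip: for a discontinuous psh $\rho_k$ the inclusion $\overline{\{\rho_k<c_k\}}\subset\{\rho_k\le c_k\}$ can fail, but you only need $\overline{\{\rho_k<c_k\}}\subset\overline{\{\rho<\alpha\}}$, and that does hold.
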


\begin{proof}
For each $k$, choose $\rho_k<0$ and $\alpha_k<0$ as in
\eqref{eq:condition-V}.  Replace $\rho_k$ by
$\max\{\rho_k,-N_k\}$, where $-N_k<\alpha_k$, to make it globally
bounded without changing $\{\rho_k<\alpha_k\}$.
Choose
\[
 \max_{K_k}\rho_k<d_k<c_k<\alpha_k
\]
and a bounded domain $G_k\subset\subset\Omega$ containing
$\overline{\{\rho_k<c_k\}}$.  Lemma~\ref{lem:local-addition}
applies with these globally bounded $\rho_k$ and gives the required
globally bounded functions.
\end{proof}

\begin{proposition}[Approximation with measures tending to zero under \textup{(V)}]
\label{prop:V-accessibility}
Let $\Omega\subset\C^n$ ($n\geq2$) be a domain satisfying condition~\textup{(V)}.  Then $0\in\mathscr R_b(\Omega,\varphi)$ for every
$\varphi\in\PSH(\Omega)\cap L^\infty(\Omega)$.
\end{proposition}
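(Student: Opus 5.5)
The plan is to copy the proof of Proposition~\ref{prop:Bc-accessibility}, using Lelong's maximal approximants and replacing the $\mathcal B_c$-separating functions by the bounded functions supplied by condition~\textup{(V)}. Continuity of the pasting functions is not needed, because only membership in $\mathscr R_b$ is claimed.

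\textbf{Step 1: Lelong approximants.} By Proposition~\ref{prop:V-pseudoconvex}, $\Omega$ is pseudoconvex. Lelong's theorem, as in the proof of Corollary~\ref{cor:local-universality}, gives continuous maximal psh functions $\psi_k$ on $\Omega$ with $\|\psi_k-\varphi\|_{L^1(K_k)}<1/k$ and $\MA(\psi_k)=0$. These $\psi_k$ need not be globally bounded.

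\textbf{Step 2: bounded pasting near $K_k$.} Fix $k$. Apply \textup{(V)} to $K_k$ to get $\rho_k<0$ and $\alpha_k<0$. Replace $\rho_k$ by $\max\{\rho_k,-N_k\}$ with $-N_k<\alpha_k$, so that $\rho_k$ is globally bounded and $U:=\{\rho_k<\alpha_k\}$ is unchanged. By upper semicontinuity, choose
\[
 \max_{K_k}\rho_k<d_k<c_k<\alpha_k .
\]
Take a bounded domain $G_k\subset\subset\Omega$ containing $\overline{\{\rho_k<c_k\}}$. Such a domain exists because this closure is a compact subset of $\Omega$ and $\Omega$ is connected; shrinking $G_k$ slightly if necessary, $\psi_k|_{G_k}$ is bounded by continuity. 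Now apply Lemma~\ref{lem:germ-localization} with $w=\psi_k|_{G_k}$. Since $\rho_k$ is globally bounded, it gives a globally bounded psh function $b_k$ on $\Omega$ with $b_k=\psi_k$ on the open set $\{\rho_k<d_k\}\supset K_k$.

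\textbf{Step 3: conclusion.} From Step~2, $\|b_k-\varphi\|_{L^1(K_k)}<1/k$, so $b_k\to\varphi$ in $L^1_{\loc}(\Omega)$. By Bedford--Taylor locality, $\MA(b_k)=\MA(\psi_k)=0$ on the open set $\{\rho_k<d_k\}$, which contains $K_k$. For any $g\in C_c(\Omega)$ we have $\supp g\subset K_k$ for all large $k$, hence $\int g\,\MA(b_k)=0$ eventually. Therefore $\MA(b_k)\to0$ vaguely, which is \eqref{eq:zero-approximation}, and so $0\in\mathscr R_b(\Omega,\varphi)$.

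\textbf{Main obstacle.} The main point to check is that Lemma~\ref{lem:germ-localization} really applies with a merely upper semicontinuous $\rho_k$. Its proof uses only $\rho\ge c$ on $\partial\{\rho<c\}$ and boundedness of $w$ on $G$, and both hold here. A second, minor point is that the statement allows unbounded $\Omega$. This causes no difficulty, since every set where estimates are made is relatively compact.
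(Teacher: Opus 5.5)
Your proposal is correct and follows essentially the same route as the paper's proof: pseudoconvexity from Proposition~\ref{prop:V-pseudoconvex}, Lelong's continuous maximal approximants $\psi_k$, bounded localization data $\rho_k,d_k,c_k,G_k$ obtained by truncating the functions from \textup{(V)}, and Lemma~\ref{lem:germ-localization} to produce globally bounded psh functions equal to $\psi_k$ near $K_k$. The conclusion then follows by Bedford--Taylor locality, exactly as in the paper.
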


\begin{proof}
By Proposition~\ref{prop:V-pseudoconvex}, $\Omega$ is pseudoconvex.
Lelong's theorem therefore supplies continuous maximal psh functions
$\psi_k$ with $\|\psi_k-\varphi\|_{L^1(K_k)}<1/k$.  Choose bounded
localization data $\rho_k,d_k,c_k,G_k$ as in the proof of
Proposition~\ref{prop:V-superposition}.  The restriction
$\psi_k|_{G_k}$ is bounded, so Lemma~\ref{lem:germ-localization} produces a
globally bounded psh function $\beta_k$ equal to $\psi_k$ near $K_k$.
Thus $\beta_k\to\varphi$ locally in $L^1$, and
$\MA(\beta_k)\to0$ vaguely.
\end{proof}

\begin{corollary}[Question~2 under \textup{(V)}]
\label{cor:V-universality}
Let $\Omega\subset\C^n$ ($n\geq2$) be a domain satisfying condition~\textup{(V)}.  For every
$\varphi\in\PSH(\Omega)\cap L^\infty(\Omega)$ and every sequence
$(\mu_k)\subset\cM_+(\Omega)$ there are globally bounded psh functions $u_k$
such that
\[
 u_k\to\varphi\quad\text{in }L^1_{\loc},\qquad
 d_{\mathrm v}(\MA(u_k),\mu_k)\to0.
\]
Consequently, the equality
$\mathscr R_b(\Omega,\varphi)=\cM_+(\Omega)$ holds, and every vaguely
closed subset of $\cM_+(\Omega)$ is the exact vague cluster set of a bounded
psh approximating sequence; the empty set is allowed.
\end{corollary}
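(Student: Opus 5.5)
The plan is to combine Proposition~\ref{prop:V-accessibility} with Proposition~\ref{prop:V-superposition}, following the same pattern as Corollary~\ref{thm:moving-zero}, and then to repeat the closure and diagonal arguments of Theorem~\ref{thm:structure} on a domain that need not be bounded.

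First, Proposition~\ref{prop:V-accessibility} gives globally bounded psh functions $\beta_k\to\varphi$ in $L^1_{\loc}$ with $\MA(\beta_k)\to0$ vaguely. After passing to a subsequence, I arrange $\|\beta_k-\varphi\|_{L^1(K_k)}<1/k$. I then apply Proposition~\ref{prop:V-superposition} with $b_k=\beta_k$ and the prescribed $\mu_k$. This yields globally bounded psh $u_k$ with
\[
 \|u_k-\beta_k\|_{L^1(K_k)}\to0,\qquad
 d_{\mathrm v}\bigl(\MA(u_k),\MA(\beta_k)+\mu_k\bigr)\to0.
\]
Since the $K_k$ exhaust $\Omega$ increasingly, every compact set lies in $K_k$ for large $k$, so $u_k\to\varphi$ in $L^1_{\loc}$. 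The triangle inequality gives
\[
 d_{\mathrm v}(\MA(u_k),\mu_k)\le d_{\mathrm v}\bigl(\MA(u_k),\MA(\beta_k)+\mu_k\bigr)+d_{\mathrm v}\bigl(\MA(\beta_k)+\mu_k,\mu_k\bigr).
\]
The main point to check is that the last term tends to zero. By translation invariance \eqref{eq:translation-invariance} it equals $d_{\mathrm v}(\MA(\beta_k),0)$, and this tends to zero because $\MA(\beta_k)\to0$ vaguely and $d_{\mathrm v}$ metrizes the vague topology (Lemma~\ref{lem:vague-metrization}, valid on every domain).

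Taking $\mu_k=\nu$ constant gives $\nu\in\mathscr R_b(\Omega,\varphi)$, so $\mathscr R_b(\Omega,\varphi)=\cM_+(\Omega)$. For cluster sets, let $F$ be a nonempty vaguely closed set. The vague topology is separable and metrizable by Lemma~\ref{lem:vague-metrization}, so Lemma~\ref{lem:cluster-topology} gives $\nu_k\in F$ with $\Clust_{\mathrm v}(\nu_k)=F$. Applying the first part with $\mu_k=\nu_k$ gives $d_{\mathrm v}(\MA(u_k),\nu_k)\to0$. Hence the two sequences have the same subsequential limits, and $\Clust_{\mathrm v}(\MA(u_k))=F$.

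For the empty set, I take $\mu_k=k\delta_{x_0}$. Any vague limit $\nu$ along a subsequence of $\MA(u_k)$ would also be a limit of $k_\ell\delta_{x_0}$, since $d_{\mathrm v}$-closeness preserves subsequential limits. This is impossible: testing against $g\ge0$ with $g(x_0)=1$ gives $\int g\,d(k\delta_{x_0})=k\to\infty$. So the cluster set is empty.

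The only subtle step is to make sure that "same subsequential limits" holds on unbounded $\Omega$. It does, because that argument uses only the metric property, not boundedness of the domain.
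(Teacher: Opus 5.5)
Your proposal is correct and follows the paper's proof essentially step for step. Proposition~\ref{prop:V-accessibility} supplies bounded approximants whose measures tend to zero, Proposition~\ref{prop:V-superposition} with the triangle inequality and translation invariance of $d_{\mathrm v}$ gives the moving-target statement, and Lemma~\ref{lem:cluster-topology} together with the target $k\delta_{x_0}$ gives the cluster-set and empty-set assertions. Your preliminary passage to a subsequence is harmless but unnecessary, since the construction in Proposition~\ref{prop:V-accessibility} already gives $\|\beta_k-\varphi\|_{L^1(K_k)}<1/k$, and $L^1_{\loc}$ convergence of $\beta_k$ alone would suffice.
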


\begin{proof}
Let $b_k$ be the approximants from
Proposition~\ref{prop:V-accessibility}.
Proposition~\ref{prop:V-superposition}, applied to $(b_k)$ and $(\mu_k)$,
gives
\begin{align*}
 d_{\mathrm v}(\MA(u_k),\mu_k)
 &\leq d_{\mathrm v}\bigl(\MA(u_k),\MA(b_k)+\mu_k\bigr)
       +d_{\mathrm v}(\MA(b_k),0)\longrightarrow0.
\end{align*}
Taking $\mu_k=\mu$ for any fixed $\mu\in\cM_+(\Omega)$ gives
$\mathscr R_b(\Omega,\varphi)=\cM_+(\Omega)$.
Lemma~\ref{lem:cluster-topology} and the assertion for arbitrary measure sequences give every
nonempty closed cluster set; the sequence $k\delta_p$, for a fixed $p\in\Omega$, gives the empty set.
\end{proof}

Condition~\textup{(V)} is not necessary for the full cluster-set
classification.  The punctured domain
$\Omega=\D^{n-1}\times\D^*$ in
Proposition~\ref{prop:conditions-not-necessary} has that classification
but fails \textup{(V)}.  To check this, fix
$K=\{0\}^{n-1}\times\{|w|=r\}$, $0<r<1$.
Suppose that $\rho<0$ and $\alpha<0$ satisfied \eqref{eq:condition-V} for
this $K$.  If $h(w)=\rho(0,w)$ is identically $-\infty$, then
$\{\rho<\alpha\}$ contains the whole punctured slice.  Otherwise $h$ extends
subharmonically across zero
\cite{DemaillyCADG}*{Chapter~I, Theorem~5.23}.  Since
$\beta=\max_{|w|=r}h(w)<\alpha$, the maximum principle gives
$h(w)\leq\beta<\alpha$ for $0<|w|\leq r$.  In either case
$\{\rho<\alpha\}$ accumulates at the missing hypersurface and is not
relatively compact in $\Omega$.

\subsection{Obstructions on unbounded domains}

Global boundedness cannot be imposed on the locally bounded result for
every pseudoconvex domain.  On a cylinder with an entire
complex line as a factor, bounded psh functions are constant along that
factor.  Their top-degree Monge--Amp\`ere measures therefore vanish,
in contrast to the locally bounded conclusion of
Corollary~\ref{cor:local-universality}.

\begin{proposition}[Vanishing Monge--Amp\`ere measures on a cylinder]\label{prop:cylinder}
Let $\Omega=\C\times\D^{n-1}$ ($n\geq2$).  Then
\[
 \MA(u)=0\qquad
 \text{for every }u\in\PSH(\Omega)\cap L^\infty(\Omega).
\]
Consequently, for every bounded psh $\varphi$ on $\Omega$,
\[
 \mathscr R_b(\Omega,\varphi)=\{0\}.
\]
At $\varphi=0$, the constant zero sequence is smooth and maximal
on the whole domain.
\end{proposition}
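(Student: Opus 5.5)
The plan has three steps: show that every bounded psh function on $\Omega=\C\times\D^{n-1}$ is independent of $z_1$, use this to get $\MA(u)=0$, and then read off the description of $\mathscr R_b$.

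\textbf{Step 1: independence of $z_1$.} Let $u\in\PSH(\Omega)\cap L^\infty(\Omega)$ and fix $z'\in\D^{n-1}$. The function $\zeta\mapsto u(\zeta,z')$ is subharmonic on all of $\C$ and bounded above. By Liouville's theorem for subharmonic functions it is constant. The cleanest justification is this: a subharmonic function on $\C$ bounded above satisfies $\max_{|\zeta|=r}u=o(\log r)$, and $\log r$ is harmonic away from $0$, so the Hadamard three-circle convexity in $\log r$ forces the circle maxima to be nondecreasing and bounded, hence constant. The strong maximum principle then gives constancy. If the slice function is identically $-\infty$ it is also constant, but boundedness of $u$ excludes this case anyway. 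Hence $u(z_1,z')=v(z')$ for a function $v$ on $\D^{n-1}$. This $v$ is bounded, and it is psh, being the restriction of $u$ to $\{z_1=0\}$.

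\textbf{Step 2: vanishing of $\MA(u)$.} Since $u$ depends on only $n-1$ variables, $\MA(u)=0$; this is the main point to justify carefully for nonsmooth $u$. I would argue by regularization. Take standard convolutions $v_\epsilon$ of $v$ on slightly smaller polydiscs; these decrease to $v$. Then $u_\epsilon(z)=v_\epsilon(z')$ are smooth psh functions decreasing to $u$ on $\C\times\D_{1-\delta}^{n-1}$. For a smooth function independent of $z_1$, the complex Hessian has vanishing first row and column, so $(\ddc u_\epsilon)^n=0$ pointwise. Bedford--Taylor monotone continuity, applied locally, gives $\MA(u)=\lim\MA(u_\epsilon)=0$ on each such subdomain, and hence on $\Omega$. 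The same conclusion also follows from the maximality characterization \cite{BacklundPersson}*{Theorem~2.3}.

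\textbf{Step 3: description of $\mathscr R_b$.} If bounded psh $u_j\to\varphi$ in $L^1_{\loc}$, then every $\MA(u_j)$ vanishes by Step 2, so the only possible vague limit is $0$, giving $\mathscr R_b(\Omega,\varphi)\subset\{0\}$. For the reverse inclusion, the constant sequence $u_j=\varphi$ is admissible: it is bounded psh, converges to $\varphi$, and has $\MA(\varphi)=0$ by Step 2. This yields $0\in\mathscr R_b(\Omega,\varphi)$, so $\mathscr R_b(\Omega,\varphi)=\{0\}$. At $\varphi=0$ the constant zero sequence is smooth and satisfies $\MA(0)=0$, so it is maximal on all of $\Omega$.

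The only nontrivial step is the Liouville step together with the monotone regularization; the rest is bookkeeping.
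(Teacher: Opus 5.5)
Your proposal is correct and follows the paper's proof essentially step for step. Liouville's theorem on each slice $\zeta\mapsto u(\zeta,z')$ gives $u=v\circ\pi$ with $v$ bounded psh on $\D^{n-1}$; smooth psh regularizations $v_\epsilon\downarrow v$ pull back to functions with degenerate complex Hessian, so $(dd^c u_\epsilon)^n=0$, and Bedford--Taylor monotone continuity gives $\MA(u)=0$. The constant sequence $\varphi$ then yields $\mathscr R_b(\Omega,\varphi)=\{0\}$, and your three-circle justification of the Liouville step supplies detail the paper leaves implicit.
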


\begin{proof}
Let $\pi:\C\times\D^{n-1}\to\D^{n-1}$ be the projection.  For fixed
$z'\in\D^{n-1}$, the function $\zeta\mapsto u(\zeta,z')$ is a
bounded-above subharmonic function on $\C$, hence constant.  Therefore
$u=\pi^*v$ for a bounded psh function $v$ on $\D^{n-1}$.

On each relatively compact ball in $\D^{n-1}$, choose smooth psh
regularizations $v_\ell\downarrow v$.  The Levi form of $\pi^*v_\ell$ has
rank at most $n-1$, so
$(\ddc\pi^*v_\ell)^n=0$.  Bedford--Taylor monotone continuity gives
$\MA(u)=0$.  The same argument applies to every globally bounded
approximating sequence, hence
$\mathscr R_b(\Omega,\varphi)=\{0\}$.  The constant sequence $b_k=\varphi$ has this limit.  At $\varphi=0$,
it is smooth and maximal on the whole domain.
\end{proof}

Smooth globally bounded approximation has a further obstruction.
Harz constructed an unbounded pseudoconvex domain carrying a bounded
negative psh function that is strictly
psh on a nonempty open set, while every continuously differentiable negative
psh function is constant there
\cite{Harz2023}*{Theorem~2 and Section~4}.  Every smooth globally bounded psh
function becomes negative after subtracting a constant and is therefore
constant on that open set.  On any ball compactly contained there, an
$L^1$ limit of such functions is almost everywhere constant, whereas Harz's
strictly psh function is not.  Thus the smooth approximants in
Corollary~\ref{cor:local-universality} cannot in general be required to be
globally bounded.

Thus locally bounded, globally bounded, and smooth globally bounded
approximation give different conclusions on unbounded domains.
Dimension and common bounds on a sequence impose further restrictions,
even on bounded domains.

\section{Restrictions and a further question}
\label{sec:dimension-one}

We record the restrictions in dimension one and under common local
bounds, then state the bounded approximation problem left open by
the preceding constructions.

\subsection{Dimension and common local bounds}

\begin{proposition}[Rigidity in complex dimension one]
\label{prop:dimension-one}
Let $\Omega\subset\C$ be a domain.  If subharmonic functions $u_k$ converge
to a subharmonic function $\varphi$ in $L^1_{\loc}(\Omega)$, then
\[
 \ddc u_k\longrightarrow\ddc\varphi
\]
vaguely as positive Radon measures.  Hence the cluster set is the singleton
$\{\ddc\varphi\}$.
\end{proposition}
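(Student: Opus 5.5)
The key fact is that in one complex variable, $\ddc$ is a linear differential operator of order two. For subharmonic functions it produces a positive measure. So the statement should follow directly from continuity of distributional derivatives under $L^1_{\loc}$ convergence, together with the equivalence of distributional and vague convergence for positive Radon measures. No boundedness or common bound is needed.

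First, for $g\in C_c^\infty(\Omega)$ the normalization gives
\[
 \int_\Omega g\,\ddc u_k=\frac{1}{4\pi}\int_\Omega u_k\,\Delta g\,d\lambda_2 .
\]
This holds because $\ddc u_k$ is, by definition, the distribution $g\mapsto\int u_k\,\ddc g$, and on $\C$ we have $\ddc g=\frac{1}{4\pi}\Delta g\,d\lambda_2$. Since $\Delta g$ is bounded with compact support $K\subset\subset\Omega$, we get
\[
 \left|\int g\,\ddc u_k-\int g\,\ddc\varphi\right|\le\frac{\|\Delta g\|_\infty}{4\pi}\|u_k-\varphi\|_{L^1(K)}\to0 .
\]
Thus $\ddc u_k\to\ddc\varphi$ against every smooth compactly supported test function.

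Second, I upgrade this to vague convergence against all of $C_c(\Omega)$. This is the only step needing care, and it rests on a uniform local mass bound. Given a compact $L$, choose $\chi\in C_c^\infty(\Omega)$ with $0\le\chi\le1$ and $\chi=1$ on $L$. Positivity of $\ddc u_k$ gives
\[
 \ddc u_k(L)\le\int\chi\,\ddc u_k ,
\]
and the right side converges by the first step, hence is bounded in $k$.

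Now take $g\in C_c(\Omega)$ with support in $L^\circ$ and approximate it uniformly by $f\in C_c^\infty$ supported in $L$, for example by mollification. Then
\[
 \left|\int g\,d(\ddc u_k-\ddc\varphi)\right|\le\left|\int f\,d(\ddc u_k-\ddc\varphi)\right|+\|g-f\|_\infty\bigl(\ddc u_k(L)+\ddc\varphi(L)\bigr).
\]
The first term tends to zero by the first step. The second is small uniformly in $k$ by the mass bound. This is exactly the estimate used in the proof of Lemma~\ref{lem:vague-metrization}, so $\ddc u_k\to\ddc\varphi$ vaguely.

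Finally, the full sequence converges vaguely, so every subsequence does as well. The vague topology on $\cM_+(\Omega)$ is Hausdorff and metrizable by Lemma~\ref{lem:vague-metrization}, so limits are unique. Hence every subsequential limit equals $\ddc\varphi$, and since the whole sequence converges, the cluster set is nonempty. The cluster set is therefore the singleton $\{\ddc\varphi\}$.
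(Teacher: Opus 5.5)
Your proof is correct and follows essentially the same route as the paper: you get distributional convergence from $L^1_{\loc}$ convergence, obtain a uniform local mass bound from a cutoff $\chi$ and positivity, and then upgrade to vague convergence with the uniform-approximation estimate of Lemma~\ref{lem:vague-metrization}. You add the explicit computation $\ddc g=\frac{1}{4\pi}\Delta g\,d\lambda_2$ and spell out why uniqueness of vague limits makes the cluster set a singleton; the paper leaves both points implicit.
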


\begin{proof}
Continuity of distributional differentiation gives
$dd^cu_k\to dd^c\varphi$ against every smooth compactly supported
test function.
For each compact $L\subset\subset\Omega$, choose
$\chi\in C_c^\infty(\Omega)$, $\chi\geq0$, equal to one near $L$.
Distributional convergence gives
$\sup_k\int\chi\,\ddc u_k<\infty$, hence a uniform mass bound on $L$.
The uniform approximation argument in Lemma~\ref{lem:vague-metrization}
therefore gives vague convergence.
\end{proof}

In every dimension, the Chern--Levine--Nirenberg estimate forces the
measures to converge to zero when the potentials tend to zero under
a common local bound.

\begin{proposition}[Convergence to zero under a common local bound]\label{prop:common-bound-zero}
Let $\Omega\subset\C^n$ be a domain.  Suppose that
$u_k\in\PSH(\Omega)\cap L^\infty_{\loc}(\Omega)$,
$u_k\to0$ in $L^1_{\loc}(\Omega)$, and
\[
 \sup_k\|u_k\|_{L^\infty(L)}<\infty
 \qquad\text{for every compact }L\subset\subset\Omega.
\]
Then $\MA(u_k)(K)\to0$ for every compact $K\subset\subset\Omega$.
In particular, $\MA(u_k)\to0$ vaguely.
\end{proposition}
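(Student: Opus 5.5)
The plan is to use the Chern--Levine--Nirenberg estimate in its sharper form involving the $L^1$ norm. That form gives
\[
 \int_K(\ddc u)^n\leq C_{K,L}\|u\|_{L^1(L)}\|u\|_{L^\infty(L)}^{n-1}
\]
for locally bounded psh $u$ (Demailly, Chapter~III). Then $\|u_k\|_{L^1(L)}\to0$ together with a common bound $\sup_k\|u_k\|_{L^\infty(L)}\le M$ gives $\MA(u_k)(K)\to0$ directly.

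Since the paper so far has only recorded the version \eqref{eq:CLN-mixed} with a trace integral on the right, I would derive the needed inequality by induction on the degree. Fix $K\subset\subset L^\circ$ and intermediate compact sets $K=L_n\subset L_{n-1}^\circ\subset\cdots\subset L_0^\circ$ with $L_0\subset L$. Choose cutoffs $\chi_r\in C_c^\infty(L_{r-1}^\circ)$ with $0\le\chi_r\le1$ and $\chi_r=1$ near $L_r$.

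The key step is to integrate by parts once. Put $T_{r}=(\ddc u_k)^{r}\wedge\beta^{n-r}$. Then
\[
 \int\chi_r(\ddc u_k)^r\wedge\beta^{n-r}=\int u_k\,\ddc\chi_r\wedge(\ddc u_k)^{r-1}\wedge\beta^{n-r},
\]
which is valid for bounded psh $u_k$ by Bedford--Taylor theory (approximate by smooth decreasing sequences). Since $\ddc\chi_r\le C\beta$ as forms, this gives
\[
 \int_{L_r}T_r\le C\int_{L_{r-1}}|u_k|\,T_{r-1}.
\]
Iterating, I would bound $\int_{L_{r-1}}|u_k|\,T_{r-1}$ by a second integration by parts of the same kind. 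Using $|u_k|\le M$ everywhere except one factor, where $|u_k|$ is integrated against Lebesgue measure at the final step, this would lead to $\int_K T_n\le C M^{n-1}\int_L|u_k|\,d\lambda_{2n}$. More carefully, the standard route applies the identity repeatedly to $\int\chi |u| (\ddc u)^{r}\wedge\beta^{n-r}$, with $|u|$ replaced by $-u$ after shifting $u_k$ to be negative on $L$.

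The main obstacle is the shift. Replacing $u_k$ by $u_k-M$ destroys smallness in $L^1$. To avoid this, I would write $(\ddc u_k)^n=\ddc\bigl(u_k(\ddc u_k)^{n-1}\bigr)$ and integrate against $\chi$ only once, keeping the factor $u_k$ itself (not $|u_k|$ of a shifted function) in the first step:
\[
 \int\chi\,\MA(u_k)=\int u_k\,\ddc\chi\wedge(\ddc u_k)^{n-1}.
\]
This yields $|\cdot|\le C\int_{L_1}|u_k|\,(\ddc u_k)^{n-1}\wedge\beta$. For the remaining factors, $(\ddc u_k)^{n-1}=(\ddc(u_k-M))^{n-1}$, so the shift changes only the operators and not the weight $|u_k|$. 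Thus I need $\int_{L_1}|u_k|\,S\to0$ for positive closed currents $S$ built from the $u_k$.

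I would handle this by the generalized CLN inequality
\[
 \int_{L'}|v|\,(\ddc w_1)\wedge\cdots\wedge(\ddc w_p)\wedge\beta^{n-p}\le C\|v\|_{L^1(L'')}\prod\|w_i\|_\infty,
\]
proved by induction on $p$ with $v=u_k-M\le0$ and $|u_k|\le|u_k-M|$ up to the adjustment $|u_k|\le -(u_k-M)+(M-|u_k|)$. This is where the care is needed.

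A cleaner alternative, which I would try first, is to cite Demailly's Chapter~III Proposition~3.11 (CLN with $L^1$ norm) directly; it implies $\int_{L_1}|u_k|\,(\ddc u_k)^{n-1}\wedge\beta\le C\|u_k\|_{L^1(L)}M^{n-1}\to0$. Vague convergence then follows from positivity, since $|\int g\,\MA(u_k)|\le\|g\|_\infty\MA(u_k)(\supp g)$.
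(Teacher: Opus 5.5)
Your proof is correct. The iteration in your second paragraph is already a complete proof: after each of the first $n-1$ integrations by parts you bound $|u_k|\le M$, and you use the $L^1$ norm only at the last step, where only $\ddc u_k\wedge\beta^{n-1}$ remains. This is exactly the paper's argument written out step by step. The paper performs your last step first,
\[
 \int_L\ddc u_k\wedge\beta^{n-1}\le\int u_k\,\ddc\theta\wedge\beta^{n-1}\le C\|u_k\|_{L^1(P)}.
\]
It then packages your other $n-1$ steps as Lemma~\ref{lem:CLN-mixed} with $b=h=u_k$ and $r=1$, that is, the ordinary Chern--Levine--Nirenberg inequality for the closed positive current $T=\ddc u_k$. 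This gives $\MA(u_k)(K)\le CM^{n-1}\|u_k\|_{L^1(P)}$, the same bound you obtain.

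The ``main obstacle'' you raise afterwards is spurious. The identity $\int\chi\,(\ddc u)^r\wedge\beta^{n-r}=\int u\,\ddc\chi\wedge(\ddc u)^{r-1}\wedge\beta^{n-r}$ holds by definition of the Bedford--Taylor product, with no sign condition on $u$. What you need is the two-sided bound $-C\beta\le\ddc\chi\le C\beta$, not only $\ddc\chi\le C\beta$. It gives $|\ddc\chi\wedge S|\le C\beta\wedge S$ for every positive $S$, after which $|u_k|\le M$ is used pointwise.

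The weighted detour is therefore unnecessary, and as sketched it would not work. The inequality $|u_k|\le-(u_k-M)+(M-|u_k|)$ fails at $u_k=M$. Moreover, $\|u_k-M\|_{L^1}$ is not small, so a weighted estimate applied to $v=u_k-M$ cannot produce decay. If you wanted that route, you would normalize by $s_k=\sup_{L'}u_k$ instead: the submean inequality (when $s_k>0$) and $|u_k|\ge|s_k|$ on $L'$ (when $s_k<0$) give $|s_k|\le C\|u_k\|_{L^1(L)}\to0$. Alternatively, you could cite a weighted $L^1$ form of the CLN inequality. Either way, this proves more than you need, so simply drop the detour and keep the iteration.
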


\begin{proof}
Fix compact sets $K\subset L^\circ\subset L\subset P^\circ\subset
P\subset\subset\Omega$, and set $\beta=dd^c|z|^2$.  A nonnegative smooth cutoff
$\theta$ supported in $P^\circ$ and equal to one near $L$ gives
\[
 \int_L dd^cu_k\wedge\beta^{n-1}
 \leq\int_\Omega\theta\,dd^cu_k\wedge\beta^{n-1}
 =\int_\Omega u_k\,dd^c\theta\wedge\beta^{n-1}
 \leq C_{L,P}\|u_k\|_{L^1(P)}\longrightarrow0.
\]
In higher dimensions ($n\geq2$), Lemma~\ref{lem:CLN-mixed} with $b=h=u_k$ and $r=1$ gives
\[
 \MA(u_k)(K)
 \leq C_{K,L}\|u_k\|_{L^\infty(L)}^{n-1}
       \int_L dd^cu_k\wedge\beta^{n-1}\longrightarrow0.
\]
For $n=1$, the first estimate suffices.  Positivity then implies vague
convergence to zero.
\end{proof}

Local $L^1$ convergence to zero already gives locally uniform upper bounds
by the submean inequality for psh functions.  Consequently, any sequence
converging to zero with a nonzero Monge--Amp\`ere measure limit must
have no common lower bound on some compact subset.  The logarithmic truncations used here permit precisely
this behavior.

\subsection{The remaining bounded-domain question}

The boundedness step in maximal approximation remains unresolved on
an arbitrary bounded pseudoconvex domain.
\begin{question}\label{q:zero-accessibility}
Let $\Omega\subset\C^n$ ($n\geq2$) be a bounded pseudoconvex domain,
and let $\varphi\in\PSH(\Omega)\cap L^\infty(\Omega)$.  Do there
always exist bounded psh functions $b_j$
such that $b_j\to\varphi$ in $L^1_{\loc}$ and
$\MA(b_j)\to0$ vaguely?
\end{question}
Lelong's theorem supplies continuous maximal approximants on the whole
domain, but no global bound. $\mathcal B_c$-convexity,
condition~\textup{(V)} (which itself implies pseudoconvexity), and
restriction after pluripolar deletion provide the bounded approximants
used here.  If the maximal approximants are
already bounded above, lower truncation also suffices.

On every bounded domain, Theorem~\ref{thm:structure} shows that the
reachable set is closed and stable under addition of nonnegative Radon
measures, and characterizes the full cluster sets that can be realized.  A positive answer to
Question~\ref{q:zero-accessibility} would identify that reachable set
with $\cM_+(\Omega)$ on every bounded pseudoconvex domain, extending
the explicit classification proved here under the stated geometric
hypotheses.  This further approximation question is not identified
with the full content of Bedford's Question~2.

\appendix
\section{Divisorial terms in the general truncation limit}
\label{app:divisor}

The proof of the addition theorem combines $u=b+h$ with the Chern--Levine--Nirenberg
estimate and does not use this appendix.  We study the related
truncation in which a locally bounded psh function is placed inside
the maximum.  The resulting formula identifies the measure supported
on the divisor and every mixed term, including its power of
$\varepsilon$.

\begin{proposition}[General divisorial truncation limit]
\label{cor:general-divisor-limit}
Let $\Omega\subset\C^n$ ($n\geq2$) be a domain, and let
$D=\bigcup_{s=1}^N D_s$ be a reduced divisor with pairwise disjoint
smooth components, defined by $F\in\mathcal O(\Omega)$; thus $F$
vanishes to order one along each $D_s$.
Let $\Psi$ be locally bounded and psh, and let $V_0$ be smooth and psh.
Set
\[
 \sigma=[D]\wedge(\ddc V_0)^{n-1},\qquad
 q_\varepsilon=\Psi+\varepsilon\log|F|^2,\qquad
 v_\varepsilon=\varepsilon^{-1/(n-1)}V_0.
\]
For each fixed $\varepsilon>0$,
\begin{equation}\label{eq:program-limit}
 \MA\bigl(\max\{q_\varepsilon,v_\varepsilon-A\}\bigr)
 \longrightarrow R_\varepsilon\qquad(A\to+\infty),
\end{equation}
where
\begin{equation}\label{eq:defect-expansion}
 R_\varepsilon=\MA(\Psi)+\sigma
 +\sum_{p=2}^n\varepsilon^{(p-1)/(n-1)}
 [D]\wedge(\ddc\Psi)^{p-1}\wedge(\ddc V_0)^{n-p}.
\end{equation}
Each remainder is a positive Radon measure.  Consequently,
$R_\varepsilon\to\MA(\Psi)+\sigma$ vaguely as
$\varepsilon\downarrow0$.
\end{proposition}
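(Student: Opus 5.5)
The plan is to rerun the argument of Lemma~\ref{thm:awnw} with $q_\varepsilon=\Psi+\varepsilon\log|F|^2$ in place of $\varepsilon\log|F|^2$. The task is to compute the non-pluripolar products and the residual currents $S_r(q_\varepsilon)$ of \cite{AWNW}*{Definition~1.2}, and then read off the limit from \cite{AWNW}*{Theorem~1.5}, exactly as in \eqref{eq:awnw}. Fix $\varepsilon$ and write $q=q_\varepsilon$ and $v=v_\varepsilon$.

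\textbf{Step 1: $q\in\mathcal G(\Omega)$ and its non-pluripolar products.} First, $q$ is psh and locally integrable. On the plurifine open sets $\{q>-k\}$, which avoid $D$, the function $\log|F|^2$ is pluriharmonic. So Bedford--Taylor locality gives
\[
\mathbf 1_{\{q>-k\}}(\ddc\max\{q,-k\})^r=\mathbf 1_{\{q>-k\}}(\ddc\Psi)^r .
\]
Letting $k\to\infty$ yields $\langle\ddc q\rangle^r=\mathbf 1_{\Omega\setminus D}(\ddc\Psi)^r$. Since $\Psi$ is locally bounded, $(\ddc\Psi)^r$ puts no mass on the pluripolar set $D$, so $\langle\ddc q\rangle^r=(\ddc\Psi)^r$. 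These currents have locally finite mass, which gives $q\in\mathcal G(\Omega)$.

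\textbf{Step 2: the residual currents.} By definition $[\ddc q]^r=\ddc\bigl(q(\ddc\Psi)^{r-1}\bigr)$, and this splits as
\[
(\ddc\Psi)^r+\varepsilon\,\ddc\bigl(\log|F|^2(\ddc\Psi)^{r-1}\bigr).
\]
The key identity is a Poincar\'e--Lelong formula relative to a bounded psh current:
\[
\ddc\bigl(\log|F|^2\,T\bigr)=[D]\wedge T,\qquad T=(\ddc\Psi)^{r-1},
\]
where the right side is understood via \eqref{eq:divisor-restriction-product}, using that $\Psi|_{D_s}$ is bounded psh. Granting it, $S_r(q)=\varepsilon[D]\wedge(\ddc\Psi)^{r-1}$, which is a positive closed current.

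\textbf{Step 3: assembling the limit.} \cite{AWNW}*{Theorem~1.5} applies since $v$ is smooth psh, and gives
\[
\MA(h_A)\to(\ddc\Psi)^n+\varepsilon[D]\wedge(\ddc\Psi)^{n-1}
+\sum_{j=1}^{n-1}\varepsilon^{1-(n-j)/(n-1)}[D]\wedge(\ddc\Psi)^{j-1}\wedge(\ddc V_0)^{n-j}.
\]
Since $1-(n-j)/(n-1)=(j-1)/(n-1)$, the $j=1$ term is $\sigma$. The terms $j=2,\dots,n-1$, together with the stray $\varepsilon[D]\wedge(\ddc\Psi)^{n-1}$ as the case $p=n$ (exponent $1$), give exactly \eqref{eq:defect-expansion}. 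Each remainder is a push-forward of a Bedford--Taylor measure on $D_s$, hence positive Radon. As $\varepsilon\downarrow0$ the remainders are fixed measures multiplied by coefficients tending to zero, so $R_\varepsilon\to\MA(\Psi)+\sigma$ vaguely.

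\textbf{Main obstacle.} The hard step is the identity in Step~2 when $\Psi$ is discontinuous, because $\log|F|^2$ is unbounded and Bedford--Taylor continuity does not directly apply to $\log|F|^2\,T$. I would work locally, with $D_s=\{w=0\}$ in coordinates and $F=w\cdot(\text{unit})$, and cite \cite{ABW2019}*{Theorem~2.1}, which handles products of $\log|w|^2$ with currents of the form $(\ddc\Psi)^{r-1}$.

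Failing a citation, the fallback is to regularize $\Psi_k\downarrow\Psi$. The identity holds for smooth $\Psi_k$ by classical Poincar\'e--Lelong. To pass to the limit, I would write $\log|w|^2=\lim_M\max\{\log|w|^2,-M\}$ and use Chern--Levine--Nirenberg bounds for $\log|w|^2\,(\ddc\Psi_k)^{r-1}$, which are uniform in $k$ since $\log|w|^2$ is locally integrable against these currents. On $D_s$ the restrictions $\Psi_k|_{D_s}\downarrow\Psi|_{D_s}$, so the right-hand side converges by Bedford--Taylor continuity on $D_s$.
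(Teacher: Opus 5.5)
Your proposal is correct and follows the paper's proof essentially step for step. It computes the non-pluripolar products $\langle\ddc q_\varepsilon\rangle^r=(\ddc\Psi)^r$, proves the relative Poincar\'e--Lelong identity $\ddc\bigl(\log|F|^2(\ddc\Psi)^{j}\bigr)=[D]\wedge(\ddc\Psi)^{j}$ by regularizing $\Psi_\nu\downarrow\Psi$ and passing to the limit with \cite{ABW2019}*{Theorem~2.1}, and then applies \cite{AWNW}*{Theorem~1.5} with the exponent bookkeeping $1-(n-p)/(n-1)=(p-1)/(n-1)$.

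One point of order needs fixing. Membership $q_\varepsilon\in\mathcal G(\Omega)$ requires local integrability of $q_\varepsilon$ against $(\ddc\Psi)^j$ for $0\leq j<n$, not merely finite mass of these currents. So you should invoke the integrability supplied by the ABW product construction (or a Chern--Levine--Nirenberg-type bound) already in Step~1, as the paper does.
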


\begin{proof}
Put $L=\log|F|^2$.  We first justify, for $0\leq j<n$, the identity
\begin{equation}\label{eq:divisor-BT-identity}
 \ddc\bigl(L(\ddc\Psi)^j\bigr)
 =[D]\wedge(\ddc\Psi)^j
 =\sum_{s=1}^N(i_s)_*\bigl((\ddc(\Psi|_{D_s}))^j\bigr),
\end{equation}
where $i_s:D_s\hookrightarrow\Omega$.  The product on the right is
the restriction product in \eqref{eq:divisor-restriction-product}.
For $j=0$ this is Poincar\'e--Lelong.  For $1\leq j<n$, the product
construction recalled before \cite{ABW2019}*{Theorem~2.1} gives local
integrability of the psh function $L$ against $(\ddc\Psi)^j$ and makes
$L(\ddc\Psi)^j$ a current with locally finite mass.

Work on a relatively compact coordinate ball and choose smooth psh
regularizations $\Psi_\nu\downarrow\Psi$ on a slightly smaller ball.
For smooth $\Psi_\nu$, Poincar\'e--Lelong and restriction of smooth
forms give
\begin{equation}\label{eq:smooth-divisor-BT}
 \ddc\bigl(L(\ddc\Psi_\nu)^j\bigr)
 =\sum_{s=1}^N(i_s)_*
       \bigl((\ddc(\Psi_\nu|_{D_s}))^j\bigr).
\end{equation}
Apply \cite{ABW2019}*{Theorem~2.1} with
\[
 u_\nu=u=L,\qquad
 v_{\ell,\nu}=\Psi_\nu,\qquad v_\ell=\Psi
 \quad(1\leq\ell\leq j).
\]
The multiplier sequence is constant, hence decreasing, and each other
factor decreases to a locally bounded psh function.  The theorem gives
$L(\ddc\Psi_\nu)^j\to L(\ddc\Psi)^j$ as currents, so their
distributional derivatives converge.  On each smooth $D_s$, the
restrictions decrease to $\Psi|_{D_s}$; Bedford--Taylor monotone
continuity gives convergence of their powers.  Push-forward is
continuous on compact supports.  Passing to the limit in
\eqref{eq:smooth-divisor-BT} proves \eqref{eq:divisor-BT-identity}.

For $q=q_\varepsilon$, locality off $D$ and the fact that both
non-pluripolar products and locally bounded Bedford--Taylor products
put no mass on $D$ give
\begin{equation}\label{eq:nonpolar}
 \langle\ddc q\rangle^p=(\ddc\Psi)^p\qquad(1\leq p\leq n).
\end{equation}
The integrability just proved shows that $q\in\mathcal G(\Omega)$:
$\langle\ddc q\rangle^j$ has locally finite mass and $q$ is locally
integrable against it for $0\leq j<n$.
Using the currents defined in the proof of Lemma~\ref{thm:awnw},
Bedford--Taylor recursion and \eqref{eq:divisor-BT-identity} yield
\begin{align}
 [\ddc q]^p
 &=\ddc\bigl(q(\ddc\Psi)^{p-1}\bigr)
   =(\ddc\Psi)^p+\varepsilon[D]\wedge(\ddc\Psi)^{p-1},
   \label{eq:bracket}\\
 S_p(q)&=\varepsilon[D]\wedge(\ddc\Psi)^{p-1}.
   \label{eq:singular-current}
\end{align}
Thus \cite{AWNW}*{Theorem~1.5} applies to $q$ and the smooth psh
function $v_\varepsilon$.  For $1\leq p<n$, its term
$S_p(q)\wedge(\ddc v_\varepsilon)^{n-p}$ has coefficient
\begin{equation}\label{eq:epsilon-balance}
 \varepsilon^{1-(n-p)/(n-1)}=\varepsilon^{(p-1)/(n-1)}.
\end{equation}
At $p=1$ the term is $\sigma$; the singular part of $[\ddc q]^n$
supplies the endpoint $p=n$.  This proves
\eqref{eq:program-limit}--\eqref{eq:defect-expansion}, with the unit
coefficients of the AWNW formula.
Each remainder is a finite sum of push-forwards of mixed
Bedford--Taylor products on the smooth components of $D$, so it has
locally finite mass.  The positive powers of $\varepsilon$ then give
the final vague limit, after taking $A\to\infty$ at fixed
$\varepsilon$.
\end{proof}

\section{Comparison in the vague metric}\label{app:metric}

Closeness in $d_{\mathrm v}$ need not imply convergence of signed
measure differences against all $C_c$ test functions. The example
below explains this distinction; the proposition shows how common
local mass bounds restore that implication.

\begin{example}[Dependence of measure comparison on the chosen metric]
\label{ex:metric-dependence}
Choose $p\in\Omega$, a unit vector $e$, and $r_k\downarrow0$ so that the
segments from $p$ to $p+r_ke$ lie in $\Omega$.  Put
\[
 M_k=r_k^{-1/2},\qquad
 \alpha_k=M_k\delta_{p+r_ke},\qquad
 \beta_k=M_k\delta_p.
\]
For each fixed $f\in C_c^\infty(\Omega)$, the mean-value estimate on a fixed
neighborhood of $p$ gives
\[
 \left|\int f\,d(\alpha_k-\beta_k)\right|
 =M_k|f(p+r_ke)-f(p)|
 \leq C_fM_kr_k=C_fr_k^{1/2}\longrightarrow0.
\]
Dominated convergence in the series \eqref{eq:vague-metric} therefore yields
$d_{\mathrm v}(\alpha_k,\beta_k)\to0$ for the fixed smooth test family.

Choose $\chi\in C_c^\infty(\Omega)$ equal to one on a neighborhood of $p$ and of all
$p+r_ke$ for large $k$, and set
\[
 g(z)=\chi(z)|z-p|^{1/2}.
\]
Then, for all sufficiently large $k$,
\[
 \int g\,d(\alpha_k-\beta_k)=M_kr_k^{1/2}=1.
\]
Thus convergence fails for this $C_c$ test function. Adjoining $g$
as the first test function gives another metric for the vague
topology, with distance at least $1/4$ for large $k$. Closeness of
moving sequences therefore depends on the compatible uniform
structure. This second metric contains a nonsmooth continuous test
function; the example does not compare two purely smooth test families.
\end{example}

\begin{proposition}[Convergence of signed differences under local mass bounds]
\label{prop:mass-control}
Let $\Omega\subset\C^n$ be a domain, and let $(\alpha_k)$ and
$(\beta_k)$ be sequences in $\cM_+(\Omega)$.  Assume
that, for every compact $L\subset\subset\Omega$,
\[
 \sup_k\alpha_k(L)<\infty,
 \qquad
 \sup_k\beta_k(L)<\infty,
\]
and that $d_{\mathrm v}(\alpha_k,\beta_k)\to0$.  Then
\[
 \int_\Omega g\,d(\alpha_k-\beta_k)\longrightarrow0
 \qquad\text{for every }g\in C_c(\Omega).
\]
\end{proposition}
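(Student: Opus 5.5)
The plan is to reduce to the defining test family $(f_j)$ of Lemma~\ref{lem:vague-metrization} by uniform approximation, using the local mass bounds to control the error. Fix $g\in C_c(\Omega)$ and $\varepsilon>0$. It suffices to treat real $g$, since complex $g$ splits into real and imaginary parts. Choose $m$ so that $\supp g\subset U_m$, with $U_m$ and $L_m=\overline U_{m+1}$ as in the proof of Lemma~\ref{lem:vague-metrization}. Put
\[
 C=\sup_k\bigl(\alpha_k(L_m)+\beta_k(L_m)\bigr)<\infty,
\]
which is finite by hypothesis. Then choose $f$ in the dense family $\mathcal D_m$ with $\|g-f\|_\infty<\varepsilon/(C+1)$. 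Both $f$ and $g$ are supported in $L_m$.

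Next, the estimate
\[
 \left|\int g\,d(\alpha_k-\beta_k)\right|
 \leq\left|\int f\,d(\alpha_k-\beta_k)\right|
 +\|g-f\|_\infty\bigl(\alpha_k(L_m)+\beta_k(L_m)\bigr)
\]
bounds the second term by $\varepsilon$ uniformly in $k$. For the first term, $f=f_J$ for some index $J$. Since $d_{\mathrm v}(\alpha_k,\beta_k)\to0$, the $J$-th summand of \eqref{eq:vague-metric} tends to zero. Because $t\mapsto t/(1+t)$ is increasing and vanishes only at zero, this forces $\int f_J\,d(\alpha_k-\beta_k)\to0$; this is the equivalence already recorded after Lemma~\ref{lem:vague-metrization}. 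Therefore
\[
 \limsup_k\left|\int g\,d(\alpha_k-\beta_k)\right|\leq\varepsilon,
\]
and letting $\varepsilon\to0$ concludes the proof.

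The only point needing care is that the approximating $f$ must be a member of the fixed family defining $d_{\mathrm v}$, rather than an arbitrary smooth function. The construction in Lemma~\ref{lem:vague-metrization}, which contains the uniformly dense families $\mathcal D_m$, guarantees this. The common mass bound on $L_m$ is exactly what Example~\ref{ex:metric-dependence} lacks, and it is where the hypothesis enters. If one prefers not to depend on the specific construction, an alternative is to apply the argument to the subfamily $\{f_j\}$, which determines vague convergence and hence is uniformly dense in $C_c$ functions supported in each $U_m$. However, the construction as given makes this step immediate.
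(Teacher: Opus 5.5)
Your proof is correct and follows the paper's argument essentially step for step: you approximate $g$ uniformly by some $f\in\mathcal D_m$ supported in $L_m$, control the error with the common mass bound on $L_m$, and use coordinatewise convergence from $d_{\mathrm v}\to0$ for the main term. Your closing aside claims that a family which determines vague convergence must itself be uniformly dense; that inference is not justified, but your main argument never uses it.
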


\begin{proof}
For real $g\in C_c(\Omega)$, choose $m$ with $\supp g\subset U_m$
and put $L=L_m$, using Lemma~\ref{lem:vague-metrization}.
Given $\delta>0$, choose $f\in\mathcal D_m$ with
$\|g-f\|_\infty<\delta$. Both supports lie in $L$, so
\[
 \left|\int g\,d(\alpha_k-\beta_k)\right|
 \leq\left|\int f\,d(\alpha_k-\beta_k)\right|
   +\delta\bigl(\alpha_k(L)+\beta_k(L)\bigr).
\]
The first term tends to zero by the metric hypothesis and the
masses are uniformly bounded. Let $k\to\infty$, then
$\delta\downarrow0$; complex $g$ follows by linearity.
\end{proof}

\section*{Acknowledgments}

The author used ChatGPT (OpenAI) during the preparation of the manuscript
for exploratory drafting and proof-audit assistance.  All mathematical arguments,
references, and conclusions were independently verified by the author, who
assumes full responsibility for the contents of the paper.

\end{document}